\definecolor{hot}{RGB}{65,105,225}
\theoremstyle{plain}
\newtheorem{theorem}{Theorem}[section]
\newtheorem{proposition}[theorem]{Proposition}
\newtheorem{lm}[theorem]{Lemma}
\newtheorem{corollary}[theorem]{Corollary}
\newtheorem{lemma}[theorem]{Lemma}
\newtheorem{thrm}[theorem]{Theorem}
\theoremstyle{definition}
\newtheorem{definition}[theorem]{Definition}
\newtheorem{remark}[theorem]{Remark}
\newtheorem{ex}[theorem]{Example}
\newtheorem*{ex*}{Example}
\newtheorem{problem}{Problem}
\def\be{\begin{equation}}
\def\ee{\end{equation}}
\def\bt{\begin{thrm}}
\def\et{\end{thrm}}
\def\bc{\begin{cor}}
\def\ec{\end{cor}}
\def\br{\begin{rmk}}
\def\er{\end{rmk}}
\def\bp{\begin{prop}}
\def\ep{\end{prop}}
\def\bl{\begin{lm}}
\def\el{\end{lm}}
\def\bex{\begin{ex}}
\def\eex{\end{ex}}
\def\bd{\begin{defn}}
\def\ed{\end{defn}}
\newcommand\sH{{\mathcal H}}
\newcommand\sC{{\mathscr C}}
\newcommand\sF{{\mathcal F}}
\newcommand\sE{{\mathcal E}}
\newcommand\sM{{\mathcal M}}
\newcommand\sU{{\mathcal U}}
\newcommand\sV{{\mathcal V}}
\newcommand\sZ{\mathcal{Z}}
\newcommand\cc{{\mathbb{C}}}
\DeclareMathOperator{\Crit}{Crit}                  
\DeclareMathOperator{\Jac}{Jac}
\DeclareMathOperator{\Lim}{Lim}
\DeclareMathOperator{\Proj}{Proj}
\DeclareMathOperator{\sFc}{\sF^{\centerdot}}
\DeclareMathOperator{\codim}{codim}              
\DeclareMathOperator{\reg}{reg}                  
\DeclareMathOperator{\sing}{sing}                  
\DeclareMathOperator{\Eu}{\mathrm{Eu}}
\DeclareMathOperator{\Zero}{Zero}
\DeclareMathOperator{\EDdeg}{EDdeg}
\DeclareMathOperator{\PEDdeg}{EDdeg_{proj}}
\def\bC{\mathbb{C}}
\def\RR{\mathbb{R}}
\def\cM{\mathcal{M}}
\def\bP{\mathbb{P}}
\def\cN{\mathcal{N}}
\def\lra{\longrightarrow}
\def\bQ{\mathbb{Q}}
\def\bZ{\mathbb{Z}}
\def\balpha{{\bm\alpha}}
\def\bbeta{{\bm\beta}}
\def\C{\mathbb{C}}
\title[]{A Morse theoretic approach to non-isolated singularities and applications to optimization}
\author{Laurentiu G. Maxim}
\address{Department of Mathematics,         University of Wisconsin-Madison,  480 Lincoln Drive, Madison WI 53706-1388, USA.}
\email {maxim@math.wisc.edu}\urladdr{https://www.math.wisc.edu/~maxim/}
\author{Jose Israel Rodriguez}
\address{Department of Mathematics,         University of Wisconsin-Madison,  480 Lincoln Drive, Madison WI 53706-1388, USA.}
\email {jose@math.wisc.edu}\urladdr{http://www.math.wisc.edu/~jose/}
\author{Botong Wang}
\address{Department of Mathematics,         University of Wisconsin-Madison,  480 Lincoln Drive, Madison WI 53706-1388, USA.}
\email {wang@math.wisc.edu}\urladdr{http://www.math.wisc.edu/~wang/}
\keywords{Euclidean distance degree, Euler characteristic, local Euler obstruction function, optimal solution,    stationary point,       maximum likelihood,    objective function}
\subjclass[2010]{13P25, 57R20, 90C26}
\begin{document}

\date{\today}

\begin{abstract} 
Let $X$ be a complex affine variety in $\mathbb{C}^N$, and let $f:\mathbb{C}^N\to \mathbb{C}$ be a polynomial function whose restriction to $X$ is nonconstant.  
For $g:\mathbb{C}^N \to \mathbb{C}$ a general linear function, 
we study the  limiting behavior of the critical points of the one-parameter family of
$f_t: =f-tg$ as $t\to 0$.
Our main result gives an expression of this limit in terms of critical sets of the restrictions of $g$ to the singular strata of $(X,f)$. 
We apply this result in the context of optimization problems.
For example, we consider nearest point problems (e.g., Euclidean distance degrees)
for affine varieties and a possibly nongeneric~data~point.

\end{abstract}

\maketitle

\section{Introduction}


The motivation for this work is to study nearest point problems for algebraic models and Euclidean distance degrees. 
For example, given a circle and a point $P$ outside its center,  there is a unique point on the circle which is closest to $P$, as seen in Figure~\ref{fig:dataZero}. 
%
However, if $P$ is taken to be the center, then every point on the circle is a closest point. 
Our aim is to understand such a special (non-generic) behavior on (arbitrary) algebraic varieties 
 by a limiting procedure on a set of critical points.
In terms of applied algebraic geometry, our results can be understood as describing what happens when genericity assumptions of statements on Euclidean distance degrees are removed (see Section~\ref{ss:ED}).
In optimization, our results state what happens as we take a regularization term to zero.

 \begin{figure}[htb!]
   \label{fig:dataZero}
 \centering
   \begin{picture}(153,183)
     {\includegraphics[width=0.37\columnwidth]{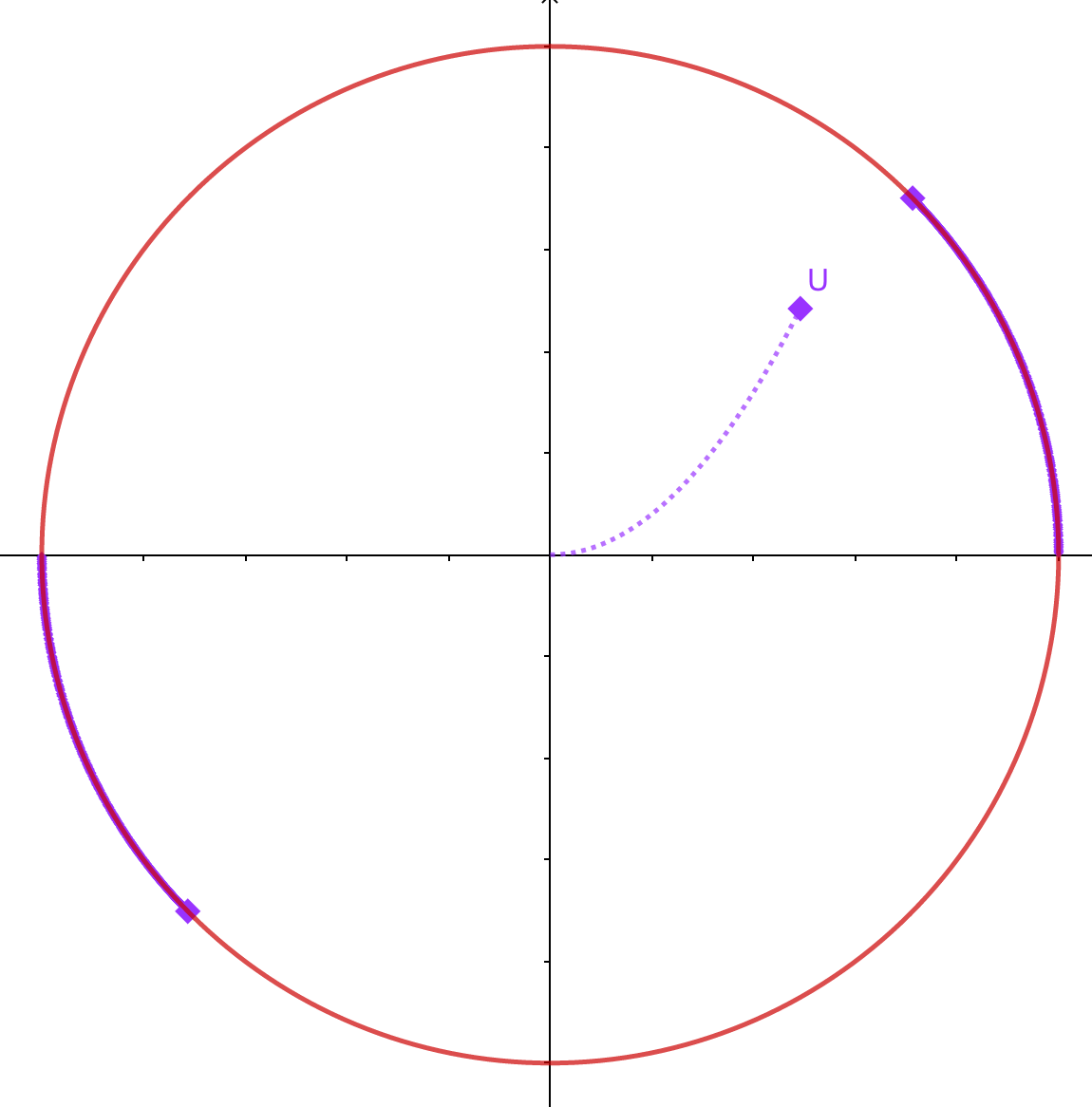}}
   \end{picture}
   \qquad\qquad
   \begin{picture}(97,183)(16,0)
     {\includegraphics[width=0.4\columnwidth]{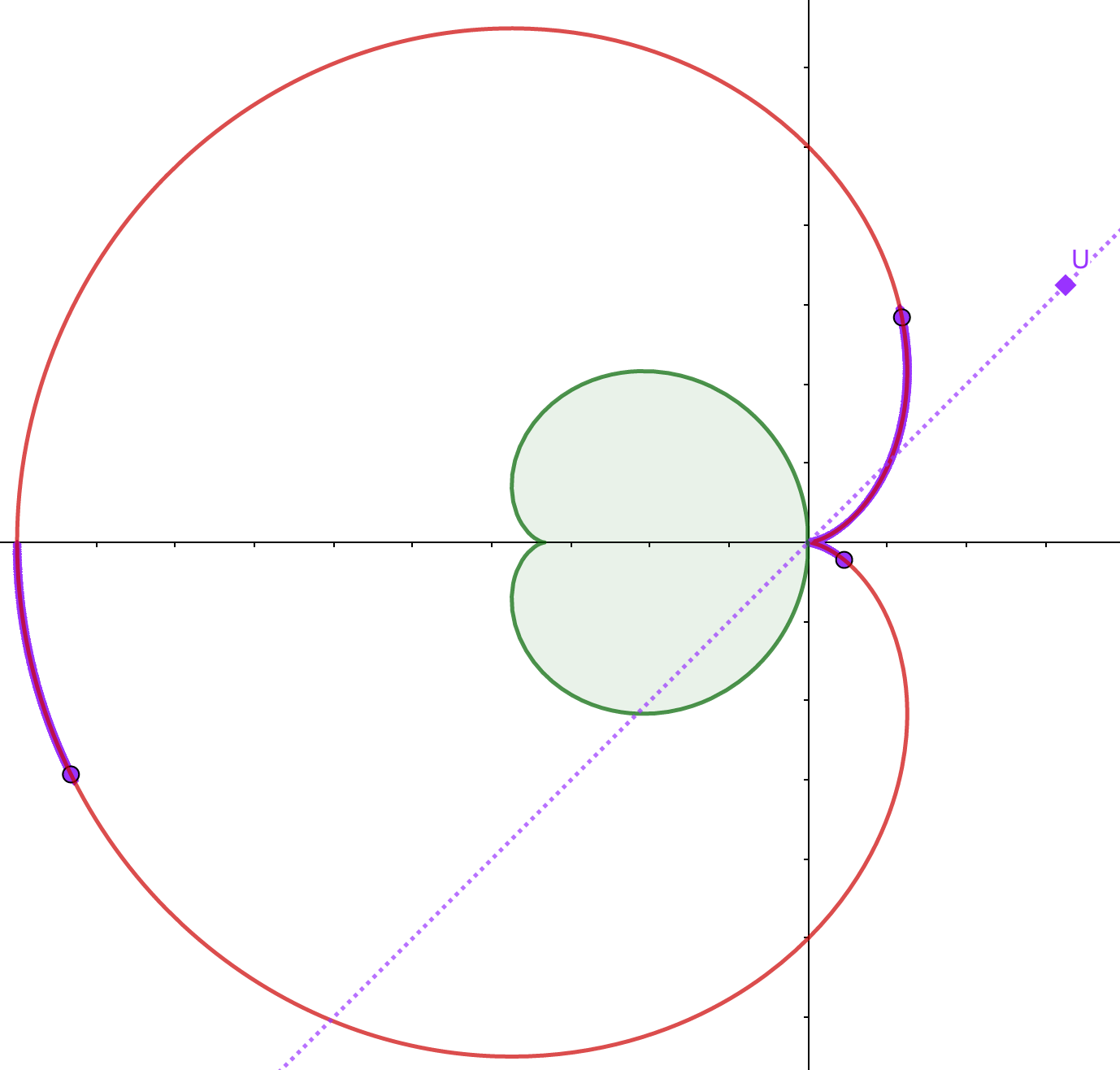}}
   \end{picture}
	\newline
   \caption{
   Each red curves is the set of real points of an algebraic variety $X$ and a 
   purple point on the curve is a 
   critical point of the distance function with respect to $U$. As $U$ moves along the dotted path, the critical points move along the purple arcs.    
   \texttt{Left}: 
    When $U$ is at the origin, every point on the circle $X$ is equidistant from $U$.  
   \texttt{Right}: 
   As $U$ approaches the origin, 
   the three critical points move along the cardioid curve $X$ and two of them come together. The green curve denotes the ED discriminant and when $U$ is in the shaded region there is only one real critical point on the smooth locus of $X$. 
   }
\end{figure}

Before stating the general result, we start with the following simple, but enlightening example. Let $X=\bC^N$, and let $f:\bC^N\to \bC$ be a polynomial function with isolated critical points $P_1, \ldots, P_l$. In this case, the singularity behavior of the function $f$ at each $P_i$ is governed by the Milnor number of $f$ at $P_i$ (see \cite{Mi}), which we denote by $n_i$. In particular, $f$ is a holomorphic Morse function (that is, it has only non-degenerate isolated critical points) if and only if each Milnor number $n_i$ is $1$. Fix a general linear function $g:\bC^N\to \bC$. Then $f_t\coloneqq f-tg$ is a holomorphic Morse function on $\bC^N$ for all but finitely many $t\in \bC$. The limit of the critical locus of $f_t$ has the following behavior as $t$ goes to 0. In a small neighborhood $U_i$ of $P_i$, there are $n_i$ non-degenerate critical points of $f_t$ for nonzero $t$ with small absolute value. As $t$ approaches zero, these critical points collide together to $P_i$. This process is the Morsification of $f$, which is a well-known result in singularity theory (see \cite[Appendix]{Br}). 

In general, we allow $X$ to be a possibly singular subvariety of $\bC^N$, and we allow $f$ to be any polynomial whose restriction to $X$ is nonconstant. If $g$ is a general linear function, then $$f_t\coloneqq f-tg$$ is a holomorphic Morse function on the smooth locus $X_{\reg}$ of $X$ for all but finitely many $t\in \bC$. We are interested in the limiting behavior of the set of critical points of $f_t\vert_{X_{\reg}}$ as $t$ approaches zero. 

In order to formulate our main result, let $X\subset \bC^N$ be a complex affine variety and let $f: \bC^N\to \bC$ be a polynomial function whose restriction to $X$ is nonconstant. Consider a stratification $X=\bigcup_{i\in I} X_i$ of $X$ into smooth locally closed subvarieties such that the Lagrangian cycles of the perverse vanishing cycle functors $^p\Phi_{f-c}([T_X^*\bC^N])$ are ``locally constant along $X_i$'' for all values of $c\in \bC$ and all $i \in I$. 
{Such a stratification of $X$ can be obtained explicitly as follows. As it will be explained in Section \ref{cc}, there exists a constructible complex $\sFc$ on $\bC^N$ with support on $X$, whose characteristic cycle is exactly the conormal space $T_X^*\bC^N$. We regard $\sFc$ as a constructible complex on $X$. The restriction $f|_X$ has only finitely many critical values in the stratified sense (see, e.g., \cite[Definition 4.2.7]{Di}), and for each such critical value $c \in \bC$ of $f|_X$ the (perverse) vanishing cycle functor $^p\Phi_{f-c}:D^b_c(X) \to D^b_c(X \cap \{f=c\})$ is constructible and supported in the stratified singular locus of $f$ (see, e.g., \cite[Proposition 4.2.8]{Di}). Choose a stratification of $X \cap \{f=c\}$ into smooth locally closed subvarieties with respect to which $^p\Phi_{f-c}(\sFc)$ is constructible. The required stratification of $X$ is then obtained by collecting all the strata in $X \cap \{f=c\}$ for each critical value $c$ of $f|_X$, together with a Whitney stratification of the complement of these critical fibers in $X$. Once such a stratification $X=\bigcup_{i\in I} X_i$ of $X$ is fixed, we 
have the following equality of Lagrangian cycles
\begin{equation}\label{eq_nii}
\sum_{c\in \bC}\,^p\Phi_{f-c}([T_X^*\bC^N])=\sum_{i\in I}n_{i}[T^*_{\overline{X_{i}}}\bC^N]
\end{equation}
for $n_i\in \bZ_{\geq 0}$.}  Notice that the sum on the left-hand side of \eqref{eq_nii} is a finite sum, since $f|_X$ has only finitely many critical values in the stratified sense, and $^p\Phi_{f-c}([T_X^*\bC^N])=0$ when $c$ is not a critical value. 
Moreover, it follows from work of Massey (see Theorem \ref{thm_positive}) that the coefficients $n_i$ are nonnegative.

By the characteristic cycle functor (see \eqref{eq_xy}), equation \eqref{eq_nii} amounts to express (up to signs), for each critical value $c$ of $f$, the constructible function $\varphi_{f-c}(\Eu_X)$ in terms of the basis of local Euler obstruction functions $\Eu_{\overline{X_{i}}}$ corresponding to closures of strata in $f=c$. Here, $\Eu_X$ denote the local Euler obstruction function introduced by MacPherson in \cite{M74}.
In general, an explicit calculation of the coefficients $n_i$ is difficult (see Example \ref{ex:rank}). 
However, when $f|_X$ has simple singularities, the vanishing cycle on the left-hand side of \eqref{eq_nii} can be  computed by hand, as the following examples show.
\begin{ex}\label{ex_milnor}
Suppose that $X$ is smooth and $f|_X$ has isolated critical points $P_1, \ldots, P_l$. Then we can take the stratification 
\[
X_0=X\setminus \{P_1, \ldots, P_l\}\quad \text{and} \quad X_i=\{P_i\}.
\]
The corresponding coefficients $n_i$ in \eqref{eq_nii} are computed directly as $n_0=0$, and $n_i$ is equal to the Milnor number of $f|_X$ at $P_i$, that is the length of the Artinian algebra $\mathcal{O}_{X, P_i}/\langle \frac{\partial f}{\partial z_1}, \ldots, \frac{\partial f}{\partial z_d}\rangle$, where $\mathcal{O}_{X, P_i}$ is the germ of holomorphic functions on $X$ at $P_i$ and $z_1, \ldots, z_d\in \mathcal{O}_{X, P_i}$ are the local  coordinates. 
\end{ex}

\begin{ex}\label{str_iso}
	Let $X\subset \bC^N$ be a possibly singular complex affine variety. Let $f: \bC^N\to \bC$ be a polynomial function whose restriction to $X$ is nonconstant and has only isolated critical points $P_1, \ldots, P_l$ in the stratified sense. 
	Then formula \eqref{eq_nii}, written in the language of constructible functions (see Section \ref{neva}), becomes:
	\be\label{unu}-\sum_{c \in \bC} \Phi_{f-c}((-1)^{\dim X}\Eu_X)=\sum_{i=1}^l n_i\Eu_{P_i}.\ee
	Fix $i \in \{1,\ldots,l\}$ and apply the equality of constructible functions in \eqref{unu} to the point $P_i$ to get:
	\be\label{niis} n_i=(-1)^{\dim X-1}\Phi_{f-f(P_i)}(\Eu_X)(P_i).\ee
Of course, if $X$ is smooth, then $\Eu_X=1_X$ and, via \eqref{Mi}, \eqref{eq10} and \eqref{16}, $n_i$ becomes the Milnor number of $f$ at $P_i$, as already mentioned in Example \ref{ex_milnor}.
\end{ex}



Let $g: \bC^N\to \bC$ be a general linear function, and write as above $f_t\coloneqq f-tg$. Our main result is the following:
\begin{theorem}\label{thm_main}
The limit of the critical points of $f_t$ satisfies
\begin{equation}\label{eq_main}
\lim_{t\to 0}\Crit(f_t|_{X_{\reg}})=\sum_{i\in I}n_{i}\cdot\Crit(g|_{X_{i}})
\end{equation}
where the symbol $\Crit$ denotes the set of critical points and the numbers $n_i$ are determined by formula \eqref{eq_nii}. 

\begin{remark}
The limit in \eqref{eq_main} is defined in Subsection \ref{ss_limit}. It is always well-defined in our setting, and it does not count the points going to infinity. 
\end{remark}

\end{theorem}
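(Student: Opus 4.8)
The plan is to localize the problem near each stratum and reduce to a statement about how many critical points of $f_t|_{X_{\reg}}$ migrate into a small ball $B_i$ around a generic point of $X_i$ as $t \to 0$, then identify that count with $n_i$ via the characteristic/vanishing cycle machinery. First I would make precise the limit in \eqref{eq_main}: choosing a very general linear $g$ guarantees that $f_t|_{X_{\reg}}$ is Morse with finitely many critical points for all small $t \neq 0$, that these critical points stay in a fixed compact set (no escape to infinity), and that for $t \neq 0$ small the critical points cluster near the stratified critical locus $\Sigma$ of $f|_X$, which is a union of (closures of) strata $\overline{X_i}$. Near a generic point $p \in X_i$, a small transverse slice $N_i$ to $X_i$ together with the restriction $g|_{X_i}$ decouples the normal and tangential directions: the critical equations for $f_t$ on $X_{\reg}$ split, to leading order, into the critical equations for $g|_{X_i}$ along $X_i$ and the critical equations for the Morsification of $f$ restricted to the transverse slice. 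This is the geometric heart of why the right-hand side of \eqref{eq_main} is a product $n_i \cdot \Crit(g|_{X_i})$.

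Next I would count, for fixed generic $p \in \Crit(g|_{X_i})$, the number of critical points of $f_t|_{X_{\reg}}$ in a small ball $B_p$ around $p$, and show it equals $n_i$. The tool is that this local count is the multiplicity of intersection, inside $T^*\bC^N$ (or rather $T^*_{X_{\reg}}\bC^N$), of the graph $\Gamma_{df_t}$ of $df_t$ with the zero section, localized near the fiber over $p$; equivalently it is computed by the characteristic cycle. By \eqref{eq_xy} and the definition of the characteristic cycle functor, the Lagrangian cycle $\sum_c {}^p\Phi_{f-c}([T_X^*\bC^N]) = \sum_i n_i [T^*_{\overline{X_i}}\bC^N]$ records exactly the ``number of critical points of a Morsification absorbed by the stratum $X_i$,'' which is the classical interpretation of vanishing cycles via Lefschetz thimbles and the Milnor fiber (Example \ref{str_iso} is the isolated-singularity prototype of this). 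Concretely, I would invoke that for a general linear perturbation $g$, the number of critical points of $(f - tg)|_{X_{\reg}}$ near the fiber of $T^*_{\overline{X_i}}\bC^N$ over a generic point of $X_i$ equals the coefficient of $[T^*_{\overline{X_i}}\bC^N]$ in $\CC({}^p\Phi_{f-c}(\sFc))$, summed over critical values $c$, which is precisely $n_i$ by \eqref{eq_nii}; this is essentially Massey's description of the Lê cycles / the characteristic cycle of vanishing cycles in terms of polar multiplicities, plus the fact that $g$ generic makes the relevant intersections transverse so multiplicities become honest point counts.

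Then I would assemble the global statement: summing the local contributions over all critical points of $g|_{X_i}$ and over all strata $i \in I$ gives $\sum_i n_i \cdot \#\Crit(g|_{X_i})$ as the total number of critical points of $f_t|_{X_{\reg}}$ for small $t \neq 0$, while simultaneously each such critical point, as $t \to 0$, limits to a point of $\Crit(g|_{X_i})$ for the stratum it is attached to (its transverse coordinates go to $0$ by the Morsification picture, its tangential coordinate is pinned at a critical point of $g|_{X_i}$). Matching the multiplicities stratum-by-stratum and using that no critical points appear ``from infinity'' (genericity of $g$, plus a boundedness/properness argument for the family $\Crit(f_t|_{X_{\reg}})$ over a small disk in $t$) yields the cycle-level equality \eqref{eq_main}. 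I would also need the small technical point that $\Crit(g|_{X_i})$ is itself finite and lies in $X_i$ (not in $\overline{X_i} \setminus X_i$) for general linear $g$, so the right-hand side is a well-defined $0$-cycle.

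The main obstacle I expect is the decoupling step near a non-generic point of $\overline{X_i}$ and, relatedly, making rigorous the claim that the local count near a generic point of $X_i$ is exactly $n_i$ with no cross-contributions from higher-codimension strata in the closure. The clean statement requires that the clustering of the critical points of $f_t|_{X_{\reg}}$ respects the stratification — i.e., a critical point near $\overline{X_i}$ that does not approach the smaller strata really does contribute to $X_i$ and contributes with the correct multiplicity coming from the \emph{transverse} Milnor data, not the global one. Handling this cleanly is where one must use the constructibility of ${}^p\Phi_{f-c}(\sFc)$ with respect to the chosen stratification (so that the normal data is constant along $X_i$), Whitney conditions (to control how the $t \neq 0$ critical locus limits onto strata), and genericity of $g$ (to avoid the critical points of $g|_{X_i}$ colliding with smaller strata and to keep all intersections transverse). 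Once these are in place the bookkeeping is routine, but setting them up carefully — particularly the transversality of $g$ and the properness preventing escape to infinity — is the technical core of the proof.
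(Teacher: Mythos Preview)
Your approach differs substantially from the paper's and contains a genuine gap. The paper does \emph{not} argue via transverse slices and local decoupling. Instead it works entirely in $T^*\bC^N$ using Ginsburg's description of the characteristic cycle of nearby cycles: one forms the family $\Lambda^\#_s = T^*_X\bC^N|_U + s\, d\log f$, intersects its closure with $\Gamma_{dg}\times\bP^1$ to obtain an algebraic curve $\sC$, and shows that $\lim_{t\to 0}\Crit(f_t|_{X_{\reg}}) = \pi_\sC(\Zero_\sC(f/s))$ near $\{f=0\}$. Ginsburg's theorems identify $\pi_\sC(\Zero_\sC(f))$ with $\sum_i m'_i\,\Crit(g|_{S_i})$ (coefficients of ${}^p\Psi_f$) and $\pi_\sC(\Zero_\sC(s))$ with $\sum_i l'_i\,\Crit(g|_{S_i})$ (coefficients of $Rj_*$); the identity ${}^p\Psi_f = j_*j^* + {}^p\Phi_f - \mathrm{id}$ on Lagrangian cycles then gives $n'_i = m'_i - l'_i$, and Massey's positivity of ${}^p\Phi_f$ is used only to check $\Zero_\sC(f/s) = \Zero_\sC(f) - \Zero_\sC(s)$ as effective divisors. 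The global statement follows by replacing $f$ with $f-c$ for each critical value $c$.

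Two concrete problems with your outline. First, the claim that genericity of $g$ forces the critical points of $f_t|_{X_{\reg}}$ to stay in a fixed compact set is false: the paper explicitly allows points to escape to infinity and devotes Corollary~\ref{cor:inf} to computing how many do. The limit in \eqref{eq_main} is defined precisely so as not to count them, so you cannot invoke compactness to do the bookkeeping. Second, and more seriously, the step ``the number of critical points of $(f-tg)|_{X_{\reg}}$ near a generic point of $X_i$ equals the coefficient $n_i$'' is exactly the content of the theorem; you assert it by citing Massey and polar multiplicities, but you never connect Massey's blow-up description $E\cap\tilde\Lambda = \sum_c \Proj({}^p\Phi_{f-c}(\Lambda))$ to the actual family $\Crit(f_t|_{X_{\reg}})$. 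The paper's curve $\sC$ (a lift of the polar curve) is precisely the object that makes this connection, and your ``decoupling to leading order'' does not substitute for it, especially when $X_i$ lies in the singular locus of $X$ and there is no local product structure $X \cong X_i \times N_i$ to decouple against. The paper does note (Remark~\ref{remark_Massey}) that a Massey-style argument should also work, but carrying it out would still require the deformation-to-the-normal-cone picture rather than a heuristic splitting of critical equations.
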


\begin{ex}\label{ex:specialization}
As in Example \ref{ex_milnor}, suppose that $X$ is smooth and $f|_X$ has only isolated critical points $P_1, \ldots, P_l$. Then Theorem \ref{thm_main} specializes to the well-known Morsification result that
\[
\lim_{t\to 0}\Crit(f_t)=\sum_{1\leq i\leq l} n_i P_i
\]
where $n_i$ is the Milnor number of $f|_X$ at $P_i$. 

Moreover, consider the situation of Example \ref{str_iso} of a possibly singular affine variety $X$, with $f$ having only isolated stratified singular points $P_1, \ldots, P_l$ on $X$. Theorem \ref{thm_main} specializes in this case to  \[
\lim_{t\to 0}\Crit(f_t)=\sum_{1\leq i\leq l} n_i P_i ,
\]
with $n_i$ computed as in formula \eqref{niis}.
\end{ex}

\begin{remark}
The left side of equation (\ref{eq_main}) does not count the points that go to infinity as $t\to 0$. To be precise, we say that \emph{no points of $\Crit(f_t|_{X_{\reg}})$ go to infinity} if 
\[\bigcup_{0<t\leq \epsilon}\Crit(f_t|_{X_{\reg}})\]
 is bounded in $\bC^N$ for sufficiently small $\epsilon \in \mathbb{R}_{>0}$. 
By (\ref{eq_main}), no points of $\Crit(f_t|_{X_{\reg}})$ go to infinity if and only if
\[|\Crit(f_t|_{X_{\reg}})|=\sum_{i\in I}n_i \cdot |\Crit(g|_{X_{i}})|,\]
for general $t$, where $|\cdot |$ denotes the cardinality of a set. 
\end{remark}

More generally, we can define the \emph{number of points of $\Crit(f_t|_{X_{\reg}})$ going to infinity} to be the number of points of $\Crit(f_t|_{X_{\reg}})$ outside of a sufficiently large ball centered at the origin for sufficiently small $t$. More precisely, it is the cardinality of $\Crit(f_t|_{X_{\reg}})\setminus B_r$ for $r\gg 0$, and $0<t\ll \frac{1}{r}$, where $B_r \subset \C^N$ is the ball of radius $r$ centered at the origin. We can give a topological interpretation of the number of points of $\Crit(f_t|_{X_{\reg}})$ going to infinity at $t$ goes to zero as follows. 

First, using a result of Seade, Tib\v{a}r and Verjovsky (see \cite[Equation (2)]{STV}), together with arguments similar to \cite[Section 3.3]{MRW2018}, we have:
\begin{theorem}
Let $X$ be any irreducible subvariety of $\bC^N$, and let $f$ be any polynomial function on $\bC^N$. For a general linear function $g$ on $\bC^N$, the number of critical points of $(f-g)|_{X_{\reg}}$ is equal to 
\[
(-1)^{\dim X}\chi({\Eu_X}|_{\sU})
\]
where $\sU$ is the complement of the hypersurface $\{f-g=c\}$ in $\bC^N$ for a general choice of $c\in \bC$. 
\end{theorem}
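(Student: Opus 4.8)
The plan is to reduce the statement to the case of a \emph{generic linear function}, which is the result of Seade, Tib\v{a}r and Verjovsky cited as \cite[Eq.\ (2)]{STV}, by means of a graph construction, and then to transport their formula back through an isomorphism; the underlying Morse‑theoretic bookkeeping is the one used in \cite[\S 3.3]{MRW2018}.

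\emph{Step 1: passing to the graph.} Let $Y=\{(x,f(x)):x\in X\}\subset \bC^{N+1}=\bC^N_x\times\bC_y$ be the graph of $f|_X$ (a closed subvariety, since $f$ is a polynomial). The linear projection $\pi\colon\bC^{N+1}\to\bC^N$ restricts to an isomorphism of varieties $\pi\colon Y\xrightarrow{\sim}X$, so $Y$ is irreducible with $\dim Y=\dim X$, $Y_{\reg}\cong X_{\reg}$, $Y_{\sing}\cong X_{\sing}$, and $\Eu_Y=\Eu_X\circ\pi$, because the local Euler obstruction is preserved under an isomorphism. Under this identification the function $(f-g)|_X$ becomes $\lambda|_Y$, where $\lambda(x,y):=y-g(x)$ is a linear function on $\bC^{N+1}$; in particular $\Crit\big((f-g)|_{X_{\reg}}\big)$ corresponds bijectively to $\Crit(\lambda|_{Y_{\reg}})$, and for general $c\in\bC$ the hypersurface $\{f-g=c\}\cap X$ corresponds to $\{\lambda=c\}\cap Y$.

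\emph{Step 2: $\lambda$ is general, and conclusion.} I would next verify that $\lambda$ is a general linear function in the sense required by \cite{STV} for $Y\subset\bC^{N+1}$. The conditions there amount to a nonempty Zariski‑open condition on the linear part of $\lambda$, i.e.\ on the conormal direction $(\mathrm{d}g,-1)\in(\bC^{N+1})^{*}$ of $\{\lambda=0\}$; since the corresponding ``bad locus'' is a proper, scaling‑invariant closed cone, its projectivization misses a dense open subset of $\bP\big((\bC^{N+1})^{*}\big)$, while the directions $[\,\mathrm{d}g:-1\,]$ arising from general linear $g$ on $\bC^N$ form a Zariski‑dense subset of it. Hence, for general $g$, $\lambda|_{Y_{\reg}}$ is Morse with finitely many nondegenerate critical points, all off $Y_{\sing}$, the general level set $\{\lambda=c\}$ is transverse to a Whitney stratification of $Y$, and $\lambda|_Y$ has no atypical value at infinity. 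Then \cite[Eq.\ (2)]{STV} yields
\[
\#\,\Crit(\lambda|_{Y_{\reg}})=(-1)^{\dim Y}\,\chi\!\left(\Eu_Y\big|_{\bC^{N+1}\setminus\{\lambda=c\}}\right),
\]
and pushing the right‑hand side forward along $\pi$ (using $\Eu_Y=\Eu_X\circ\pi$) turns it into $(-1)^{\dim X}\chi\!\left(\Eu_X\big|_{\bC^N\setminus\{f-g=c\}}\right)=(-1)^{\dim X}\chi(\Eu_X|_{\sU})$; combined with Step~1 this is the assertion.

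\emph{Main obstacle.} The delicate point is the behaviour at infinity in Step~2: one must know that, for general $g$, the family $\{f-g=c\}_{c\in\bC}$ is equisingular near infinity — i.e.\ $f-g$ has no atypical values coming from infinity — so that no part of the weighted Euler characteristic is lost ``at $c=\infty$'' and the global counting in \cite{STV} is valid. I would prove this by a general‑position argument at the hyperplane at infinity $H_\infty\subset\bP^{N+1}$ relative to a Whitney stratification of $\overline{Y}$ (note that $\overline{Y}\cap H_\infty$ is independent of $g$), or invoke the known tameness of generic linear perturbations of a polynomial. As a consistency check that also bypasses the precise hypotheses of \cite{STV}, one may run stratified Morse theory directly: for $R\gg 0$ the set $\{\,f-g\in D_R\,\}\cap X$ computes the weighted Euler characteristic $\chi(\Eu_X)$, each of the $\mu:=\#\Crit\big((f-g)|_{X_{\reg}}\big)$ Morse points contributes exactly $(-1)^{\dim X}$ to $\chi(\Eu_X)-\chi\big(\Eu_X|_{\{f-g=c\}}\big)$ through its vanishing cycle (a $(\dim X-1)$‑sphere attached along the smooth locus, where $\Eu_X\equiv 1$), and summing over the critical points gives $\mu=(-1)^{\dim X}\big(\chi(\Eu_X)-\chi(\Eu_X|_{\{f-g=c\}})\big)=(-1)^{\dim X}\chi(\Eu_X|_{\sU})$ — which again rests on the same tameness at infinity.
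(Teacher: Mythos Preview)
Your proposal is correct and follows essentially the route the paper intends: the paper does not spell out a proof here but simply invokes \cite[Eq.\ (2)]{STV} together with ``arguments similar to \cite[Section 3.3]{MRW2018}'', and your graph construction is precisely the natural way to carry out that reduction to a genuinely linear function. The only substantive point is the one you flag --- that for general $g$ the closure $\overline{\{\lambda=c\}}$ in $\bP^{N+1}$ meets $\overline{Y}$ and $\overline{Y}\setminus Y=\overline{Y}\cap H_\infty$ transversally in the stratified sense (which is exactly the ``general'' hypothesis in the STV formula, cf.\ Remark~\ref{remark_transversal} in the second half of the paper) --- and your Bertini argument handles it, since $\overline{Y}\cap H_\infty$ is fixed while the hyperplanes $\{y=g(x)\}\subset H_\infty$ sweep out a Zariski-open subset of the full dual space as $g$ varies; this is the same mechanism used in the paper's Proposition~\ref{prop_Y}.
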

Together with Theorem \ref{thm_main}, this yields the following:
\begin{corollary}\label{cor:inf}
The number of points of $\Crit(f_t|_{X_{\reg}})$ going to infinity is equal to 
\begin{equation}\label{eq:inf}
(-1)^{\dim X}\chi({\Eu_X}|_{\sU})-\sum_{i\in I}n_{i}\cdot \big|\Crit(g|_{X_{i}})\big|
\end{equation}
where $|\cdot |$ denotes the cardinality of a set. 
\end{corollary}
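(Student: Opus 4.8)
The plan is to read off the statement by combining the Theorem stated just above (which computes the total number of critical points of a general linear perturbation of $f|_{X_{\reg}}$) with Theorem~\ref{thm_main} (which identifies the critical points that stay bounded) and with the definition of ``number of points going to infinity'' recalled in the preceding remark. I would first fix $g$ general enough that, simultaneously, the conclusion of that Theorem applies to $f-g$, the conclusion of Theorem~\ref{thm_main} holds for the family $f_t=f-tg$, and $f_t|_{X_{\reg}}$ is a holomorphic Morse function for all $t$ in a cofinite subset of $\bC$ containing $t=1$; each of these is a Zariski-open condition on $g$, so a general $g$ meets all of them. As in the Theorem being used, I would take $X$ irreducible, the general case following by working one irreducible component at a time.

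\emph{Total count.} Next I would show that for general $t$ one has $|\Crit(f_t|_{X_{\reg}})| = (-1)^{\dim X}\chi(\Eu_X|_{\sU})$. The cardinality of $\Crit\big((f-h)|_{X_{\reg}}\big)$, as $h$ ranges over linear functions, is constructible and attains its generic (maximal) value on a Zariski-dense open set $V$; by the cited Theorem, applied with $h=g$, this generic value equals $(-1)^{\dim X}\chi(\Eu_X|_{\sU})$. For all but finitely many $t\neq 0$ the linear function $tg$ lies in $V$ — the line $\{tg : t\in\bC\}$ meets $V$ at $t=1$, hence meets it in a cofinite set of parameters $t$ — so $|\Crit(f_t|_{X_{\reg}})| = |\Crit((f-tg)|_{X_{\reg}})|$ equals the same value; in particular passing from $g$ to $tg$ changes neither the count nor the Euler characteristic $\chi(\Eu_X|_{\sU})$.

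\emph{Bounded count, and conclusion.} By Theorem~\ref{thm_main}, $\lim_{t\to 0}\Crit(f_t|_{X_{\reg}}) = \sum_{i\in I} n_i\cdot\Crit(g|_{X_i})$ as a $0$-cycle, where the limit is taken in the sense of Subsection~\ref{ss_limit}; by construction this limit ignores critical points escaping to infinity and records, near each point of $\bC^N$, exactly how many critical points of $f_t|_{X_{\reg}}$ cluster there as $t\to 0$. For general $g$ each $\Crit(g|_{X_i})$ is a finite set, and since $\Crit$ appears in \eqref{eq_main} as a reduced $0$-cycle the length of the limit cycle is $\sum_{i\in I} n_i\,|\Crit(g|_{X_i})|$. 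Choosing $r\gg 0$ so that the finite support of the limit cycle lies in $B_r$, and then $0<t\ll 1/r$, gives $|\Crit(f_t|_{X_{\reg}})\cap B_r| = \sum_{i\in I} n_i\,|\Crit(g|_{X_i})|$. By the definition recalled before the corollary, the number of points of $\Crit(f_t|_{X_{\reg}})$ going to infinity equals $|\Crit(f_t|_{X_{\reg}})| - |\Crit(f_t|_{X_{\reg}})\cap B_r|$ for $r\gg 0$ and $0<t\ll 1/r$; subtracting the two computations above yields \eqref{eq:inf}.

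I expect the main obstacle to be the bookkeeping in the \emph{Total count} step: one must make sure that the count naturally attached to the one-parameter family $f_t=f-tg$ (with $t$ a small general scalar) really coincides with the count produced by the cited Theorem for the fixed general linear function $g$. The genericity argument above settles this, but it is worth double-checking that a single choice of $g$ can meet all the Zariski-open conditions imposed on it, and that the notion of limit cycle from Subsection~\ref{ss_limit} is genuinely compatible with counting critical points inside a large ball, which is what legitimizes the \emph{Bounded count} step.
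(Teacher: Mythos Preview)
Your proposal is correct and follows essentially the same approach as the paper: the paper simply asserts that the corollary follows by combining the preceding Theorem with Theorem~\ref{thm_main}, and you have spelled out exactly that combination (total count minus bounded count). Your genericity bookkeeping in the \emph{Total count} step, ensuring that $tg$ remains general for all but finitely many $t$ once $g$ is general, is the right way to fill in the one detail the paper leaves implicit.
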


As an immediate application of Corollary \ref{cor:inf} together with our  result from \cite[Theorem 1.3]{MRW2018}, 
we provide a new formula for the
Euclidean distance (ED) degree of an affine variety. 
In the previous literature, the Euclidean distance degree of an algebraic variety is described in terms of a distance function with respect to a generic data point. 
The following corollary (with a mild hypothesis regarding critical points at infinity), 
gives a formula for the ED degree in terms of 
 critical points of a general \emph{linear} function on strata $X_i$
 where $f$ is a
 distance function with respect to an \emph{arbitrary} data point.
 
 \begin{corollary}\label{cor:ED}
Fix a data point $(u_1,\dots,u_n)\in\mathbb{C}^n$ and an algebraic variety $X\subset\mathbb{C}^n$.
For $f=\sum_{i=1}^n(x_i-u_i)^2$, if no points of $\Crit(f-tg|_{X_{\reg}})$ go to infinity as $t\to 0$, 
then the Euclidean distance degree of $X$ equals 
$$\sum_{i\in I}n_{i}\cdot |\Crit(g|_{X_{i}})|.$$
\end{corollary}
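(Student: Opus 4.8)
The plan is to compare the two sides of \eqref{eq_main} after specializing $f$ to the squared Euclidean distance function, and to identify the left-hand cardinality with the Euclidean distance degree. Write $g=\sum_{j=1}^n a_jx_j+b$ for the general linear function furnished by Theorem \ref{thm_main}; its linear part determines the vector $a=(a_1,\dots,a_n)$, which we may take to be general in $\mathbb{C}^n$. First I would record the elementary identity obtained by completing the square: for $f=\sum_{j=1}^n(x_j-u_j)^2$ one has
\[
f_t=f-tg=\sum_{j=1}^n\bigl(x_j-v_j(t)\bigr)^2+c(t),\qquad v_j(t):=u_j+\tfrac{t}{2}a_j,
\]
where $c(t)$ depends only on $t$. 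Since adding a constant does not change critical loci, $\Crit(f_t|_{X_{\reg}})=\Crit(d_{v(t)}|_{X_{\reg}})$, the set of critical points on $X_{\reg}$ of the squared-distance function $d_{v(t)}$ from the point $v(t)=u+\tfrac{t}{2}a$.

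Next I would invoke genericity. By definition $\EDdeg(X)$ is the number of critical points on $X_{\reg}$ of the squared-distance function from a general data point; equivalently there is a proper Zariski-closed subset $\Sigma\subset\mathbb{C}^n$ (containing the ED discriminant) with $|\Crit(d_w|_{X_{\reg}})|=\EDdeg(X)$ for every $w\notin\Sigma$. Because $a$ is general, the affine line $\{u+sa:s\in\mathbb{C}\}$ is not contained in $\Sigma$, hence meets it in finitely many points; therefore $v(t)\notin\Sigma$, and so $|\Crit(f_t|_{X_{\reg}})|=\EDdeg(X)$, for all but finitely many $t\in\mathbb{C}$. On the other hand, Theorem \ref{thm_main} together with the displayed remark characterizing when no points go to infinity shows that, under the hypothesis that no points of $\Crit(f_t|_{X_{\reg}})$ go to infinity as $t\to 0$, one has $|\Crit(f_t|_{X_{\reg}})|=\sum_{i\in I}n_i\cdot|\Crit(g|_{X_i})|$ for general $t$. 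Intersecting the two cofinite sets of admissible $t$ and comparing the resulting equalities yields $\EDdeg(X)=\sum_{i\in I}n_i\cdot|\Crit(g|_{X_i})|$, which is the assertion.

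The only genuinely delicate point is the bookkeeping of the several genericity conditions in play: one must ensure that the "general linear function $g$" demanded by Theorem \ref{thm_main} (so that $f_t$ is a holomorphic Morse function on $X_{\reg}$ for all but finitely many $t$ and the Morsification limit \eqref{eq_main} is valid) can simultaneously be chosen with linear part $a$ in a direction for which the line $u+\mathbb{C}a$ avoids $\Sigma$. Each of these is an open dense condition on the coefficients of $g$, so their conjunction is again open dense, but this should be stated explicitly rather than left implicit. A secondary point worth spelling out is the precise notion of ED degree being used — critical points on the smooth locus with respect to a generic data point — and the standard fact (the input from the cited literature) that this count is constant off a proper closed subset of data points. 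Everything else is formal.
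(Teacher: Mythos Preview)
Your argument is correct and is actually more direct than the paper's route. The paper deduces the corollary from Corollary~\ref{cor:inf} together with the topological formula \cite[Theorem~1.3]{MRW2018} for the ED degree: under the ``no points at infinity'' hypothesis, Corollary~\ref{cor:inf} gives $(-1)^{\dim X}\chi(\Eu_X|_{\sU})=\sum_i n_i\,|\Crit(g|_{X_i})|$, where $\sU$ is the complement of a general level set of $f-g$; completing the square shows this level set is $\{\sum_j(x_j-\beta_j)^2+\beta_0=0\}$ for general $\beta$, so the left-hand side equals $\EDdeg(X)$ by \cite{MRW2018}. You bypass the Euler-characteristic bridge entirely: by completing the square you identify $\Crit(f_t|_{X_{\reg}})$ with the ED critical set for the data point $u+\tfrac{t}{2}a$, and then appeal directly to the definition of $\EDdeg(X)$ together with the remark characterizing when no points go to infinity. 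Your approach therefore avoids invoking \cite[Theorem~1.3]{MRW2018} (and implicitly Theorem~1.5) and uses only Theorem~\ref{thm_main}; the paper's approach, on the other hand, situates the result within the Euler-obstruction framework that governs the rest of the paper. Your caveat about stacking the genericity conditions on $g$ is well placed and is the only point that deserves an explicit sentence.
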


To study the limiting behavior of the set of critical points, we use the work of Ginsburg \cite{G} on characteristic cycles. Another (possibly more direct) approach is to make use of Massey's results from \cite{Massey1}, which we learnt about as we were in the final stage of writing up this paper. For more details, see Remark \ref{remark_Massey}.

 \medskip

The paper is organized as follows. 
In Section~\ref{sec:2}, we introduce the notion of limit of sets, and recall the necessary background on constructible complexes and their characteristic cycles. In Section \ref{sec_Ginzburg}, we review Ginsburg's work of pushforward of characteristic cycles and the characteristic cycle of the nearby cycle functor. Our main result, Theoreom \ref{thm_main} is proved in Section \ref{sec:4}, while Section \ref{sec:5} is devoted to applications.

\medskip 

{\bf Acknowledgements} The authors thank J\"org Sch\"urmann for useful discussions and for bringing the references \cite{Massey1, Massey2} to their attention.  
L. Maxim is partially supported by the Simons Foundation Collaboration Grant \#567077. He also thanks the Sydney Mathematical Research Institute for hospitality and for providing him with excellent working conditions during the final stage of writing this paper.
J.~I. Rodriguez is partially supported by the College of Letters and Science, UW-Madison. 
B. Wang is partially supported by the NSF grant DMS-1701305 and by a Sloan Fellowship.


\section{Preliminaries}\label{sec:2}
In this section, we give a precise definition of the limit of sets. We also review the notion of characteristic cycles, nearby/vanishing cycles, and their relations.
\subsection{Limit of sets}\label{ss_limit}
We introduce here the notion of limit for a parametrized family of sets, which appears in the formulation of our main result, Theorem \ref{thm_main}.
\begin{definition}
Throughout this paper, by \textbf{a set of points}, we always mean a finite set with multiplicity. More precisely, fixing a ground set $S$, by a set of points $\cM$ of $S$, we mean a function $\cM: S\to \mathbb{Z}_{\geq 0}$ such that $\cM(x)=0$ for all but finitely many $x\in S$. We call $\cM(x)$ the \textbf{multiplicity} of $\cM$ at $x$. 
For two sets of points $\cM$ and $\cN$ of $S$, we write $\cM\geq \cN$, if $\cM(x)\geq \cN(x)$ for every point $x\in S$. 

Let $\phi: S\to T$ be a map of sets, and let $\cM$ be a set of points in $S$. Then $\phi(\cM)$ is a set of points in $T$ defined by
\[\phi(\cM)(y)=\sum_{x\in \phi^{-1}(y)}\cM(x).\]
\end{definition}
\begin{ex}
Any finite subset $T\subset S$ can be considered as a set of points $\cM^T$ in $S$, by setting 
\[
\cM^T(x)=
\begin{cases}
1, & x\in T,\\
0, & x\notin T.
\end{cases}
\]
\end{ex}

\begin{definition}
Fixing a Hausdorff space $S$ as the ground set, let $\cM_t$ be a family of sets of points of $S$, parametrized by $t\in \mathbb{C}^*$ (or more generally a punctured disc centered at the origin). We define \textbf{the limit of} $\cM_t$ as $t\to 0$, denoted by $\lim_{t\to 0}\cM_t$, to be the set of points given by:
\[
(  \lim_{t\to 0}\cM_t  )  (x)\coloneqq \varprojlim_{U}  \lim_{t\to 0}\sum_{y\in U}\cM_t(y),
\]
where $\varprojlim_U$ denotes taking the inverse limit over all open neighborhood of $x$. 
\end{definition}
\begin{remark}
The limit $\lim_{t\to 0}$ either exists as a finite set with multiplicity, or does not exist. If the limit $\lim_{t\to 0}$ exists, then for any $x\in S$, and for any sufficiently small neighborhood $U$ of $x$, then the limit $\lim_{t\to 0}\sum_{y\in U}\cM_t(y)$ exists as a finite number. 
\end{remark}

\begin{remark}
From now on, all the limits we work with are of algebraic nature. 
More precisely, $S$ is an algebraic variety, and there exists a (not necessarily irreducible) algebraic curve $C\in S\times \bC^*$, such that $\sM_t=p_S(C\cap S\times \{t\})$, where $p_S: S\times \bC^*\to S$ is the projection to the first factor. In this case, it is easy to see that the limit $\lim_{t\to 0}\sM_t$ always exists. 
\end{remark}

\begin{lemma}\label{lemma_top}
Let $\phi: S\to T$ be a proper continuous map between Hausdorff and locally compact spaces. Let $\sM_t$ be a family of sets with multiplicity parametrized by $t\in \bC^*$. Then
\begin{equation}\label{eq_phi}
\phi\big(\lim_{t\to 0}\sM_t\big)=\lim_{t\to 0}\phi(\sM_t)
\end{equation}
if both limits exist. 
\end{lemma}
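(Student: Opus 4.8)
The plan is to reduce the statement to an elementary fact about limits of finite sets with multiplicity under a proper map, handled stratum by stratum over the disc parameter $t$. Concretely, I would first unwind the definition of $\lim_{t\to 0}$ given just above: for a point $y\in T$, the value $\big(\lim_{t\to 0}\phi(\sM_t)\big)(y)$ is the stabilized value, over small neighborhoods $V\ni y$, of $\lim_{t\to 0}\sum_{z\in V}\phi(\sM_t)(z)$, and by the pushforward formula $\phi(\sM_t)(z)=\sum_{x\in\phi^{-1}(z)}\sM_t(x)$ this equals $\lim_{t\to 0}\sum_{x\in\phi^{-1}(V)}\sM_t(x)$. So the left-hand side of \eqref{eq_phi} evaluated at $y$ is
\[
\big(\phi(\lim_{t\to 0}\sM_t)\big)(y)=\sum_{x\in\phi^{-1}(y)}\big(\lim_{t\to 0}\sM_t\big)(x)=\sum_{x\in\phi^{-1}(y)}\ \varprojlim_{U\ni x}\lim_{t\to 0}\sum_{w\in U}\sM_t(w),
\]
and the right-hand side evaluated at $y$ is $\varprojlim_{V\ni y}\lim_{t\to 0}\sum_{x\in\phi^{-1}(V)}\sM_t(x)$. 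Thus the claim amounts to showing these two stabilized sums agree.

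Next I would exploit properness. Since $\phi^{-1}(y)$ is compact and $T$ (hence $S$) is Hausdorff and locally compact, the set $\big(\lim_{t\to 0}\sM_t\big)$ restricted near $\phi^{-1}(y)$ is supported on finitely many points $x_1,\dots,x_k\in\phi^{-1}(y)$ (the limit is a finite set with multiplicity by hypothesis). Choose pairwise disjoint open neighborhoods $U_1,\dots,U_k$ of $x_1,\dots,x_k$ that are small enough so that each $U_j$ realizes the stabilized value $\big(\lim_{t\to 0}\sM_t\big)(x_j)=\lim_{t\to 0}\sum_{w\in U_j}\sM_t(w)$. By properness of $\phi$ together with compactness of $\phi^{-1}(y)$, there is an open $V\ni y$ with $\phi^{-1}(V)\subseteq U_1\cup\cdots\cup U_k\cup W$, where $W$ is an open set whose closure misses $\phi^{-1}(y)$; shrinking further, I would arrange (again using that $\lim_{t\to0}\sM_t$ exists and is finite, plus the algebraic-curve description in the preceding remark so that no mass of $\sM_t$ escapes to the boundary) that $\lim_{t\to0}\sum_{w\in W}\sM_t(w)=0$. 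Then
\[
\lim_{t\to0}\sum_{x\in\phi^{-1}(V)}\sM_t(x)=\sum_{j=1}^k\lim_{t\to0}\sum_{w\in U_j}\sM_t(w)+\lim_{t\to0}\sum_{w\in W}\sM_t(w)=\sum_{j=1}^k\big(\lim_{t\to0}\sM_t\big)(x_j),
\]
which is exactly $\big(\phi(\lim_{t\to0}\sM_t)\big)(y)$, and this value is independent of all further shrinking of $V$, so it is the stabilized value computing $\big(\lim_{t\to0}\phi(\sM_t)\big)(y)$. This proves \eqref{eq_phi}.

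I expect the main obstacle to be the bookkeeping around the ``boundary'' contribution $W$: one must be sure that shrinking $V$ (equivalently, the $U_j$ and $W$) does not leave behind points of $\sM_t$ that, for $t$ near $0$, sit in $\phi^{-1}(V)$ but converge to $\phi^{-1}(y)$ outside $\bigcup U_j$ — in other words, that the limit set $\lim_{t\to0}\sM_t$ near the compact fiber $\phi^{-1}(y)$ is genuinely concentrated at the $x_j$ and that there is no ``leakage along the fiber.'' This is where properness of $\phi$ (to get a tube $\phi^{-1}(V)$ contained in a prescribed neighborhood of the compact fiber) and the algebraicity/finiteness hypothesis on $\sM_t$ (to rule out infinitely many points accumulating, and to guarantee the relevant one-variable limits exist) are both essential; with those in hand the argument is the routine neighborhood-chasing above. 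A cleaner packaging, which I might use instead, is to invoke the algebraic-curve model $C\subset S\times\bC^*$ with $\sM_t=p_S(C\cap(S\times\{t\}))$: then $\phi(\sM_t)$ is cut out by the curve $(\phi\times\id)(C)\subset T\times\bC^*$, properness of $\phi$ ensures this is again a (closed) curve with the same behavior at $t=0$, and both sides of \eqref{eq_phi} compute the fiber over $0$ of the closure of the respective curves, which agree because taking closures commutes with the proper map $\phi\times\id$.
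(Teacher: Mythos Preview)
Your approach is essentially the paper's: work over a single target point $y$, use properness to make the fiber compact, cover it by small neighborhoods on which the limiting multiplicities are already realized, and add up. There is, however, a genuine slip in your covering step. You take $x_1,\dots,x_k\in\phi^{-1}(y)$ to be the \emph{support} of $\lim_{t\to 0}\sM_t$ inside the fiber, choose disjoint $U_1,\dots,U_k$, and then claim one can find $V\ni y$ with $\phi^{-1}(V)\subseteq U_1\cup\cdots\cup U_k\cup W$ where $\overline{W}\cap\phi^{-1}(y)=\emptyset$. This is impossible whenever $\phi^{-1}(y)$ contains points other than the $x_j$ (it is compact, but can be infinite): any such point lies in $\phi^{-1}(V)$ for every $V$, so it would have to lie in $W$, contradicting $\overline{W}\cap\phi^{-1}(y)=\emptyset$. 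Your last paragraph senses this (``leakage along the fiber'') but tries to fix it by invoking the algebraic-curve model, which the lemma does not assume and which is not needed.

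The paper's fix is exactly what you are missing: for \emph{every} point $x$ of a compact neighborhood $\phi^{-1}(V)$ (not just the $x_j$), the definition of the limit gives a neighborhood $U_x$ and an $\epsilon_x>0$ such that $\sM_t\cap U_x$ has $m_i$ points if $x=x_j$ and is empty otherwise, for $0<t<\epsilon_x$. Compactness of $\phi^{-1}(V)$ then yields a finite subcover and a uniform $\epsilon$, after which the count is simply $\sum_j m_j$. No separate set $W$ and no algebraicity is required. The paper also streamlines matters by first observing the trivial inequality $\phi(\lim_{t\to 0}\sM_t)\le \lim_{t\to 0}\phi(\sM_t)$, so that one only has to match a single total count $n$ at $Q$; your two-sided computation is fine but a little more work than necessary.
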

\begin{proof}
The inequality 
\begin{equation}\label{eq_sets}
\phi\big(\lim_{t\to 0}\sM_t\big)\leq \lim_{t\to 0}\phi(\sM_t)
\end{equation}
is obvious, and does not require any compactness assumption. Now we prove the converse. 

Since the statement is local in $T$, we may assume that $\lim_{t\to 0}\phi(\sM_t)$ is supported at one point, that is $\lim_{t\to 0}\phi(\sM_t)=nQ$ for some $Q\in T$ and $n\in \bZ_{\geq 0}$. To show the converse of \eqref{eq_sets}, it suffices to show both sides have the same multiplicity at $Q$. 

Since $T$ is locally compact, there exists an arbitrarily small compact neighborhood $V$ of $Q$ and $\epsilon_V>0$, such that $\sM_t\cap \phi^{-1}(V)$ consists of $n$ points for any $0<t<\epsilon_V$. 
Let 
\[\lim_{t\to 0}\sM_t=\sum_{i\in J} m_iP_i.\]
By \eqref{eq_sets}, we have $P_i\in \phi^{-1}(Q)\subset \phi^{-1}(V)$. By definition, for any $x\in V$, there exists a neighborhood $U_x$ of $x$ in $V$ and $\epsilon_x>0$, such that $\sM_t\cap U_{P_i}$ consists of $m_i$ points, and $\sM_t\cap U_x$ is empty if $x\notin \{P_i| i\in J\}$. Since $S$ is Hausdorff, we can also assume that $U_{P_i}$ are pairwise disjoint for $i\in J$. Since $\phi$ is proper, $\phi^{-1}(V)$ is compact. Hence we can cover $\phi^{-1}(V)$ by finitely many $U_x$. Let $\epsilon$ be the smallest $\epsilon_x$ among all $x$ appearing in the index of the above covering. Then for any $0<t<\epsilon$, the set with multiplicity $\sM_t\cap \phi^{-1}(V)$ consists of $\sum_{i\in J}m_i$ points. Thus, $\sum_{i\in J}m_i=n$, that is $\phi\big(\lim_{t\to 0}\sM_t\big)$ and $\lim_{t\to 0}\phi(\sM_t)$ have the same multiplicity at $Q$. 
\end{proof}

\subsection{Constructible complexes and characteristic cycles}\label{cc}
A sheaf $\sF$ of $\bC$-vector spaces on a variety $M$ is {\it constructible} if there exists a finite stratification $M=\sqcup_j S_j$ of $X$ into locally closed smooth subvarieties (called {\it strata}), such that the restriction of $\sF$ to each stratum $S_j$ is a $\bC$-local system of finite rank. A complex $\sF^{\centerdot}$ of sheaves of $\bC$-vector spaces on $M$ is called constructible if its cohomology sheaves $\sH^i(\sF^{\centerdot})$ are all constructible. 
Denote by $D^b_c(M)$ the bounded derived category of constructible complexes (with respect to some stratification) on $M$, i.e., one identifies constructible complexes containing the same cohomological information.

By associating characteristic cycles to constructible complexes on a smooth variety $M$ (e.g., see \cite[Definition 4.3.19]{Di} or \cite[Chapter IX]{KS}), one gets a functor 
$$CC:K_0(D^b_c(M)) \lra LCZ(T^*M)$$
on the Grothendieck group of $\bC$-constructible complexes, where $LCZ(T^*M)$ is the free abelian group spanned by the irreducible conic Lagrangian cycles in the cotangent bundle $T^*M$. Recall that any element of $LCZ(T^*M)$ is of the form $\sum_k n_k [T^*_{Z_k}M]$, for some $n_k \in \bZ$ and $Z_k$ closed irreducible subvarieties of $M$. Here, if $Z$ is a closed irreducible subvariety of $M$ with smooth locus $Z_{\reg}$, its conormal bundle $T^*_{Z}M$ is defined as the closure in $T^*M$ of $T^*_{Z_{\reg}}M$.
One can then define a group isomorphism
$$T:LCZ(T^*M) \lra Z(M)$$ to the group $Z(M)$ of algebraic cycles on $M$ by:
$$\sum_k n_k [T^*_{Z_k}M] \longmapsto \sum_k (-1)^{\dim Z_k} n_k Z_k.$$

Let $CF(M)$ be the group of algebraically constructible functions on a complex algebraic variety $M$, i.e., the free abelian group generated by indicator functions $1_Z$ of closed irreducible subvarieties $Z \subset M$. To any constructible complex $\sF^{\centerdot} \in D^b_c(M)$ one associates a constructible function $\chi_{st}(\sF^{\centerdot})\in CF(M)$ by taking stalkwise Euler characteristics, i.e.,
$$\chi_{st}(\sF^{\centerdot})(x):=\chi(\sF^{\centerdot}_x)$$
for any $x \in X$. For example, $\chi_{st}(\bC_M)=1_M$. Another important example of a constructible function is the {\it local Euler obstruction} function $\Eu_M$ of MacPherson \cite{M74}, which is an essential ingredient in the definition of Chern classes for singular varieties.
Since the Euler characteristic is additive with respect to distinguished triangles, one gets an induced group homomorphism (in fact, an epimorphism)
$$\chi_{st}:K_0(D^b_c(M)) \lra CF(M).$$
Moreover, since the class map $D^b_c(M) \to K_0(D^b_c(M))$ is onto, $\chi_{st}$ is already an epimorphism on $D^b_c(M)$.

When $Z$ is a closed subvariety of $M$, we may regard the function $\Eu_Z$ as being defined on  all of $M$ by setting $\Eu_Z(x)=0$ for $x \in M \setminus Z$. In particular, we may consider the group homomorphism
\begin{equation}\label{eq_ZCF}
\Eu:Z(M) \lra CF(M)
\end{equation}
defined on an irreducible cycle $Z$ by the assignment $Z \mapsto \Eu_Z$, and then extended by $\bZ$-linearity. A well-known result (e.g., see \cite[Theorem 4.1.38]{Di} and the references therein) states that the homomorphism
$\Eu:Z(M) \to CF(M)$
is an isomorphism.

The Euler obstruction function enters into the formulation of the {\it local index theorem}, which in the above notations and for $M$ smooth asserts the existence of the following commutative diagram  (e.g., see \cite[Section 5.0.3]{S} and the references therein):
\begin{equation}\label{eq_xy}
\xymatrix{
K_0(D^b_c(M)) \ar[d]_{CC} \ar[r]^{\chi_{st}} & CF(M) \ar[d]^{\Eu^{-1}}_{\cong}  \\
LCZ(T^*M) \ar[r]^T_{\cong} & Z(M)
}
\end{equation}
In particular, one can associate a characteristic cycle to any constructible function $\varphi \in CF(M)$ by the formula
$$CC(\varphi):=T^{-1} \circ \Eu^{-1}(\varphi).$$
For example, if $Z$ is a closed irreducible subvariety of $M$, one has:
$$CC(\Eu_Z)=(-1)^{\dim Z}[T^*_Z M].$$
Note also that $$CC(\sF^{\centerdot})=CC(\chi_{st}(\sF^{\centerdot}))$$ for any constructible complex $\sF^{\centerdot} \in D^b_c(M)$.

\subsection{Nearby and vanishing cycle functors}\label{neva}
Let $M$ be a complex manifold, and let $f:M \to \Delta$ be a holomorphic map 
to a disc, with $i:f^{-1}(0) \hookrightarrow M$ the inclusion of the zero-fiber. The
canonical fiber $M_{\infty}$ of $f$ is defined by
$$M_{\infty}:=M \times_{\Delta^*} \hbar,$$ where $\hbar$ is the
complex upper-half plane (i.e., the universal cover of the
punctured disc via the map $z \mapsto \exp(2\pi i z)$). Let $k:M_{\infty} \hookrightarrow M$ be the induced map.
The \emph{nearby cycle functor of $f$}, $\Psi_f: D^b_c(M) \to D^b_c(f^{-1}(0))$ is defined by
\begin{equation} \Psi_f(\sFc):=i^*Rk_*k^* \sFc.\end{equation}  The
\emph{vanishing cycle functor} $\Phi_f: D^b_c(M) \to D^b_c(f^{-1}(0))$ is the
cone on the comparison morphism $i^*\sFc \to
\Psi_f(\sFc)$, that is, there exists a canonical morphism
$can:\Psi_f(\sFc) \to \Phi_f(\sFc)$ such that
\begin{equation}\label{sp}
i^*\sFc \to \Psi_f(\sFc) \overset{can}{\to} \Phi_f(\sFc)
\overset{[1]}{\to}\end{equation} is a distinguished triangle in
$D^b_c(f^{-1}(0))$.

It follows directly from the definition that for $x \in X_0$, \begin{equation}\label{Mi}
H^j(M_{f,x};\bQ)=\mathcal{H}^j(\Psi_f \bQ_X)_x \ \ \ {\rm and} \ \ \ 
\widetilde{H}^j(M_{f,x};\bQ)=\mathcal{H}^j(\Phi_f \bQ_X)_x,\end{equation}
where $M_{f,x}$ denotes the Milnor fiber  of $f$ at $x$. 

It is also known that the
shifted functors $^p \Psi_f:=\Psi_f[-1]$ and $^p \Phi_f:=\Phi_f[-1]$
take perverse sheaves on $M$ into perverse sheaves on the 
zero-fiber $f^{-1}(0)$ (e.g., see \cite[Theorem 6.0.2]{S}).

By repeating the above constructions for the function $f-c$, one gets functors 
$$\Psi_{f-c}, \Phi_{f-c}: D^b_c(M) \to D^b_c(f^{-1}(c)),$$
provided that $\{f=c\}$ is a nonempty hypersurface.

The nearby cycle functor descends to a functor on the category of constructible functions, see, e.g., \cite{Ve} or \cite[Section 4]{Sch12}. In other words, the constructible function $\chi_{st}\big(\Psi_f(\sF)\big)$ only depends on the function $\chi_{st}(\sF)$. Therefore, $\Psi_f$ induces a linear map, which we also denote by $\Psi_f$, 
\begin{equation}
\Psi_f:CF(M) \to CF(f^{-1}(0)), 
\end{equation}
where we regard elements of $CF(f^{-1}(0))$ as constructible functions om $M$ with support on $f^{-1}(0)$.
In fact, the above linear map $\Psi_f$ can be defined directly as follows: 
\begin{equation}\label{eq10}\Psi_f(\alpha)(x)=\chi(\alpha \cdot 1_{M_{f,x}}).\end{equation}
In particular, \begin{equation}\label{eq11}\Psi_f(1_M)=\mu \in CF(f^{-1}(0)),\end{equation}
where $\mu:f^{-1}(0) \to \bZ$ is the  constructible function defined by the rule:
\begin{equation}\label{eq12}\mu(x):=\chi(M_{f,x}),\end{equation} for all $x \in f^{-1}(0)$. Note that 
$$
\mu=\chi_{st}(\Psi_f\bQ_X).
$$

By analogy with (\ref{sp}), one defines a vanishing cycle functor on constructible functions,
\begin{equation}
\Phi_f:CF(M) \to CF(f^{-1}(0))\subset CF(M), 
\end{equation}
by setting \be\label{16}\Phi_f(\alpha):=\Psi_f(\alpha)-\alpha\vert_{f^{-1}(0)}.\ee

\begin{remark}
By \eqref{eq_xy}, the characteristic cycle functor $CC:CF(M)\overset{\cong}\to LCZ(T^*M)$ allows one to regard the nearby and vanishing cycle functors $\Psi_f, \Phi_f$ as functors on conic Lagrangian cycles in the cotangent bundle $T^*M$. This will be the way we view nearby and vanishing cycle functors for the rest of this paper. 
\end{remark}

\subsection{Pushforward, Pullback, Attaching triangle}
Let $M$ be a complex manifold as above, and let $f:M \to \Delta$ be a holomorphic map 
to a disc. Let $i:f^{-1}(0) \hookrightarrow M$ and $j:U=M \setminus f^{-1}(0) \hookrightarrow M$ be the inclusion maps of the zero-fiber and of its complement, respectively. Recall that for any $\sFc \in D^b_c(M)$, there is an attaching triangle in $D^b_c(M)$:
\begin{equation}\label{at}
j_!j^*\sFc \to \sFc \to i_*i^*\sFc 	\overset{[1]}{\to}
\end{equation}
with $i_*=i_!$.

\begin{lemma}\label{l29}
Under the above notations, we have:
\begin{equation}
\chi_{st}(j_!j^*\sFc)=\chi_{st}(Rj_*j^*\sFc) \in CF(M).	
\end{equation}
\end{lemma}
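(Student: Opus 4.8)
The plan is to compute both sides as constructible functions and show they agree at every stalk, exploiting that taking stalkwise Euler characteristic is exact on distinguished triangles and that the Euler characteristic of a cone is additive. First I would recall the two attaching triangles for the pair $(i,j)$: the triangle \eqref{at}, namely $j_!j^*\sFc \to \sFc \to i_*i^*\sFc \overset{[1]}{\to}$, and its dual, $i_*i^!\sFc \to \sFc \to Rj_*j^*\sFc \overset{[1]}{\to}$. Applying $\chi_{st}$ to each and using additivity of the stalkwise Euler characteristic over distinguished triangles, I get
\[
\chi_{st}(j_!j^*\sFc) = \chi_{st}(\sFc) - \chi_{st}(i_*i^*\sFc), \qquad
\chi_{st}(Rj_*j^*\sFc) = \chi_{st}(\sFc) - \chi_{st}(i_*i^!\sFc).
\]
So the lemma reduces to the identity $\chi_{st}(i_*i^*\sFc) = \chi_{st}(i_*i^!\sFc)$ in $CF(M)$, i.e., both pieces supported on $f^{-1}(0)$ contribute the same constructible function.

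Next, I would verify that equality stalkwise on $f^{-1}(0)$ (off $f^{-1}(0)$ both sides vanish, since $i_*$ is supported there). For $x \in f^{-1}(0)$ we have $(i_*i^*\sFc)_x = \sFc_x$ and $(i_*i^!\sFc)_x = (i^!\sFc)_x$. The stalk of $i^!\sFc$ at $x$ can be identified, up to a shift accounted for by Euler characteristic signs, with the relative cohomology $R\Gamma(\{f=0\}, \{f=0\}\setminus x; \sFc)$ locally, or more concretely, via the local structure of constructible complexes, with costalks; the point is that for the stalkwise Euler characteristic one has $\chi\big((i^!\sFc)_x\big) = \chi\big((i^*\sFc)_x\big)$. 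The cleanest route is to invoke the standard fact that $\chi_{st}$ commutes with Verdier duality, i.e., $\chi_{st}(\mathbb{D}\sGc) = \chi_{st}(\sGc)$ for any $\sGc \in D^b_c(M)$ (Euler characteristics of a finite-dimensional complex and its dual agree), combined with $\mathbb{D}(i^*\mathbb{D}\sFc) \cong i^!\sFc$ and the fact that $\chi_{st}(\mathbb{D}\sFc) = \chi_{st}(\sFc)$; chaining these gives $\chi_{st}(i^!\sFc) = \chi_{st}(\mathbb{D} i^* \mathbb{D}\sFc) = \chi_{st}(i^*\mathbb{D}\sFc) = \chi_{st}(i^*\sFc)$, where in the last step one uses that $i^*$ on constructible functions (induced pullback, i.e., restriction) is compatible with $\chi_{st}$ and that duality is the identity on $CF(M)$.

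Alternatively, and perhaps more elementarily, I would argue directly: restriction $i^*$ on the level of constructible functions is just $\alpha \mapsto \alpha|_{f^{-1}(0)}$, and the Verdier dual operation $\mathbb{D}$ acts as the identity on $CF(M)$ (a classical fact, since the local Euler obstruction functions, which form a basis, are self-dual); since $\chi_{st}(i^*\sGc)$ and $\chi_{st}(i^!\sGc)$ both depend only on $\chi_{st}(\sGc)$ and are interchanged by $\mathbb{D}$, they coincide. Either way the key input is the self-duality of $\chi_{st}$ at the level of constructible functions.

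I expect the main obstacle to be purely expository rather than mathematical: making precise the statement "$i^!$ and $i^*$ induce the same operation on constructible functions" and citing it cleanly. This is essentially the statement that the stalkwise-Euler-characteristic functor factors through $CF(M)$ and that the costalk Euler characteristic equals the stalk Euler characteristic — a consequence of the index formula / self-duality of $\chi_{st}$ as recalled in Section \ref{cc}. Once that is in hand, the lemma is a two-line application of additivity of $\chi_{st}$ on the two attaching triangles. I would therefore organize the write-up as: (1) state the two attaching triangles; (2) apply $\chi_{st}$; (3) reduce to $\chi_{st}(i^*\sFc) = \chi_{st}(i^!\sFc)$; (4) invoke self-duality of $\chi_{st}$ on $CF(M)$ to conclude.
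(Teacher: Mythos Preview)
Your argument is correct but takes a genuinely different route from the paper's proof. The paper argues pointwise: on $U$ both complexes have the same stalks, while at $x\in f^{-1}(0)$ the stalk of $j_!j^*\sFc$ vanishes and one computes directly that $\chi\big((Rj_*j^*\sFc)_x\big)=\chi(B_x\setminus f^{-1}(0);\sFc)=\chi(B_x;\sFc)-\chi(B_x\cap f^{-1}(0);\sFc)=0$, using additivity of Euler characteristics and the conical local structure. Your approach instead pairs the two attaching triangles (the one in \eqref{at} and its Verdier-dual $i_*i^!\sFc\to\sFc\to Rj_*j^*\sFc\overset{[1]}{\to}$) and reduces the statement to $\chi_{st}(i^*\sFc)=\chi_{st}(i^!\sFc)$, which you then deduce from the self-duality $\chi_{st}\circ\mathbb{D}=\chi_{st}$ on $CF(M)$. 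This is cleaner and more formal: it never touches the hypersurface geometry of $f^{-1}(0)$ and in fact proves the lemma for \emph{any} closed embedding $i$ with open complement $j$, not just the zero fiber of a holomorphic function. The paper's proof, on the other hand, is more self-contained (it avoids invoking Verdier duality and the fact that $\mathbb{D}$ acts trivially on constructible functions), and makes the vanishing on $f^{-1}(0)$ visible as a concrete local computation. One small point in your write-up: the step $\chi_{st}(i^*\mathbb{D}\sFc)=\chi_{st}(i^*\sFc)$ is justified precisely because $i^*$ on constructible functions is restriction and $\chi_{st}(\mathbb{D}\sFc)=\chi_{st}(\sFc)$; you say this, but it would be worth stating the self-duality $\chi_{st}\circ\mathbb{D}=\chi_{st}$ as the single external input and citing it (e.g.\ via the index formula in \eqref{eq_xy}, or the fact that $CC$ commutes with $\mathbb{D}$ up to the fiberwise antipodal map, which is trivial on conic Lagrangians).
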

\begin{proof}
Since $j^*j_! \simeq j^*Rj_*\simeq id$, we see that the restrictions of the complexes $j_!j^*\sFc$ and $Rj_*j^*\sFc$ to $U$ are quasi-isomorphic, so they have the same stalks over $U$. At points in $f^{-1}(0)$, the complex $j_!j^*\sFc$ has zero stalks. So it remains to show that $\chi((Rj_*j^*\sFc)_x)=0$ for all $x \in f^{-1}(0)$. 
Next note that for any  $x\in f^{-1}(0)$ and $k \in \mathbb{Z}$, we have: $$H^k((Rj_*j^*\sFc)_x)\cong \mathbb{H}^k(B_x; Rj_*j^*\sFc) \cong \mathbb{H}^k(B_x \setminus f^{-1}(0); j^*\sFc),$$ for $B_x$ a small enough ball in $M$ centered at $x$. Therefore,
$$\chi((Rj_*j^*\sFc)_x)=\chi(B_x \setminus f^{-1}(0); j^*\sFc).$$
Finally, using \cite[Corollary 4.1.23, Remark 4.1.24]{Di} and the implied additivity of Euler characteristics from (\ref{at}), one gets that:
$$\chi((Rj_*j^*\sFc)_x)= \chi(B_x;\sFc) - \chi(B_x \cap f^{-1}(0); i^*\sFc)=\chi(\sFc_x)- \chi(\sFc_x)=0,$$
thus completing the proof.
	\end{proof}
\begin{remark}
The above lemma is also a special case of \cite[Example 6.0.17(1)]{S}, and it can be deduced from the distinguished triangle
$$j_!j^*\sFc \to Rj_*j^*\sFc \to i_*i^*Rj_*j^*\sFc \overset{[1]}{\to}$$
(which is obtained by applying \eqref{at} to $Rj_*j^*\sFc$ instead of $\sFc$, and using $j^*Rj_*\simeq id$), by noting that (cf. \cite[(6.37)]{S}): $$[i_*i^*Rj_*j^*\sFc]=0\in K_0(D^b_c(M)).$$
\end{remark}

When coupled with the local index theorem, Lemma \ref{l29} yields the following.
\begin{corollary} In the above notations, we have:
\begin{equation}\label{eq_cc}
	CC(j_!j^*\sFc)=CC(Rj_*j^*\sFc).
\end{equation}
\end{corollary}

It is well known (e.g., see \cite[Section 2.3]{S}) that all the usual functors in sheaf theory, which respect the corresponding category of constructible complexes of sheaves, induce by the epimorphism $\chi_{st}$ well-defined group homomorphisms on the level of constructible functions. This was already indicated above for the nearby and vanishing cycle functors, and the same applies for the functors $i_*$, $i^*$, $j_!$, $j^*$, $Rj_*$, which on the level of constructible functions are denoted by  $i_*=i_!$, $i^*$, $j_!$, $j^*$, $j_*$. In particular, by \eqref{eq_xy}, these functors can also be considered as functors on conic Lagrangian cycles in the cotangent bundle $T^*M$ (with support in a certain subvariety, if needed).
\begin{proposition}\label{prop_cycles}
In the above notations, let $\Lambda$ be a conic Lagrangian cycle in $T^*M$. Then
\[
^p\Psi_f(\Lambda)=j_*j^*(\Lambda)+\, ^p\Phi_f(\Lambda)-\Lambda \in LCZ(T^*M).
\]
\end{proposition}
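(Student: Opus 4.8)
The plan is to reduce the statement to a purely sheaf-theoretic identity at the level of the Grothendieck group $K_0(D^b_c(M))$, and then transport it through the local index theorem diagram \eqref{eq_xy}. First I would recall that, by the remark following \eqref{16}, the operators ${}^p\Psi_f$, ${}^p\Phi_f$, $j_*j^*$ appearing in the statement are, by definition, the operators on $LCZ(T^*M)$ obtained by conjugating the corresponding operators on constructible functions (equivalently on $K_0(D^b_c(M))$) by the characteristic cycle isomorphism $CC$. Since $CC$ is an isomorphism of abelian groups and all the operators in sight are $\bZ$-linear, it suffices to prove the corresponding identity in $CF(M)$ (or in $K_0(D^b_c(M))$), namely that for every constructible complex $\sFc \in D^b_c(M)$ one has
\[
\chi_{st}\big({}^p\Psi_f\sFc\big) = j_*j^*\,\chi_{st}(\sFc) + \chi_{st}\big({}^p\Phi_f\sFc\big) \in CF(M),
\]
and then invoke the fact (recalled in the excerpt) that $\chi_{st}$ is onto, so every Lagrangian cycle is $CC(\sFc)$ for some $\sFc$.

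Next I would establish this constructible-function identity. Working in $K_0(D^b_c(M))$, the shift by $[-1]$ only contributes an overall sign, so the claimed identity is equivalent (after multiplying by $-1$) to
\[
[\Psi_f\sFc] = -\,[j_!j^*\sFc] + [\Phi_f\sFc] \quad\text{in } K_0,
\]
or, using the attaching triangle \eqref{at} which gives $[j_!j^*\sFc] = [\sFc] - [i_*i^*\sFc]$, to the defining relation $[\Phi_f\sFc] = [\Psi_f\sFc] - [i^*\sFc]$ from the distinguished triangle \eqref{sp} — together with the replacement of $j_!j^*$ by $Rj_*j^*$ on the level of $K_0$, which is exactly the content of Lemma \ref{l29} and its Corollary \eqref{eq_cc}. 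Concretely: by \eqref{16} on constructible functions, $\Phi_f(\alpha) = \Psi_f(\alpha) - \alpha|_{f^{-1}(0)} = \Psi_f(\alpha) - i_*i^*(\alpha)$; by the additivity of $\chi_{st}$ applied to the attaching triangle \eqref{at}, $j_!j^*(\alpha) = \alpha - i_*i^*(\alpha)$; hence $i_*i^*(\alpha) = \alpha - j_!j^*(\alpha)$ and therefore $\Phi_f(\alpha) = \Psi_f(\alpha) - \alpha + j_!j^*(\alpha)$, i.e. $\Psi_f(\alpha) = j_!j^*(\alpha) + \Phi_f(\alpha) - \alpha$. Finally, Lemma \ref{l29} tells us $\chi_{st}(j_!j^*\sFc) = \chi_{st}(Rj_*j^*\sFc)$, so on the level of constructible functions (equivalently, of Lagrangian cycles via \eqref{eq_cc}) we may replace $j_!j^*$ by $j_*j^*$, obtaining $\Psi_f(\alpha) = j_*j^*(\alpha) + \Phi_f(\alpha) - \alpha$. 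Shifting by $[-1]$ converts $\Psi_f,\Phi_f$ into ${}^p\Psi_f,{}^p\Phi_f$ and leaves the identity otherwise unchanged (the $j_*j^*$ and identity terms are unaffected since the shift is applied uniformly), giving the proposition.

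The only genuinely delicate point is the passage from $j_!j^*$ to $j_*j^*$, i.e. checking that these two operators agree after applying $\chi_{st}$; but this is precisely Lemma \ref{l29}, already proved above, so there is no real obstacle — the argument is essentially a bookkeeping exercise combining \eqref{sp}, \eqref{at}, \eqref{16}, and Lemma \ref{l29}, transported through the isomorphism \eqref{eq_xy}. I would write it out in a few lines, being careful only to note at the start that all the operators in the statement are defined as the $CC$-conjugates of their constructible-function counterparts, so that proving the identity in $CF(M)$ genuinely suffices.
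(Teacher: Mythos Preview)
Your approach is essentially identical to the paper's: reduce to constructible functions via the surjectivity of $CC$, combine the triangles \eqref{sp} and \eqref{at} with Lemma~\ref{l29}, then pass to the perverse versions. However, your execution contains two sign slips that happen to cancel. First, from $\Phi_f(\alpha) = \Psi_f(\alpha) - \alpha + j_!j^*(\alpha)$ one obtains $\Psi_f(\alpha) = \Phi_f(\alpha) + \alpha - j_!j^*(\alpha)$, not $\Psi_f(\alpha) = j_!j^*(\alpha) + \Phi_f(\alpha) - \alpha$ as you write. Second, the shift $[-1]$ does \emph{not} ``leave the identity otherwise unchanged'': on $K_0$ (equivalently on constructible functions or Lagrangian cycles) one has ${}^p\Psi_f = -\Psi_f$ and ${}^p\Phi_f = -\Phi_f$, while $j_*j^*$ and the identity are not shifted at all. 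Negating the correct identity $\Psi_f(\alpha) = \Phi_f(\alpha) + \alpha - j_*j^*(\alpha)$ and substituting then yields ${}^p\Psi_f(\alpha) = {}^p\Phi_f(\alpha) - \alpha + j_*j^*(\alpha)$, which is the proposition; the paper makes this last step explicit.
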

\begin{proof}
Since the characteristic cycle functor $CC: K_0\big(D^b_c(M)\big)\to LCZ\big(T^*M\big)$ is surjective, the distinguished triangle \eqref{sp} implies that
\[
\Psi_f(\Lambda)=i^*(\Lambda)+\Phi_f(\Lambda),
\]
as an identity of Lagrangian cycles in $LCZ(T^*M)$, with support in $f^{-1}(0)$. In particular, we identify $i^*(\Lambda)$ and $i_*i^*(\Lambda)$ in $LCZ(T^*M)$. 
Furthermore, the distinguished triangle \eqref{at} yields that
\[
i_*i^*(\Lambda)+j_! j^*(\Lambda)=\Lambda 
\]
and, by \eqref{eq_cc}, we have
\[
j_! j^*(\Lambda)=j_* j^*(\Lambda). 
\]
Combining the above three equations, we get:
\[
\Psi_f(\Lambda)=\Lambda-j_* j^*(\Lambda)+\Phi_f(\Lambda).
\]
Notice that as functors of Lagrangian cycles (just as on $K_0(D^b_c(M))$), $^p\Psi_f(\Lambda)$ and $^p\Phi_f(\Lambda)$ are equal to the negative of $\Psi_f(\Lambda)$ and $\Phi_f(\Lambda)$, respectively. Thus, the assertion follows from the above equation. 
\end{proof}


\section{The characteristic cycle of nearby and vanishing cycles functors}\label{sec_Ginzburg}
In this section, we review Ginsburg's work \cite{G} on the pushforward of characteristic cycles and the characteristic cycle of nearby cycle functor. 

Let $M$ be a complex manifold. Given any holomorphic function $f: M\to \bC$, let $U$ be the complement of the hypersurface $f^{-1}(0)$ in $M$. Given any conic Lagrangian cycle $\Lambda$ in $T^*U$, Ginsburg defined the pushforward of $\Lambda$ by the open inclusion map $j: U\to M$, denoted by 
$\underset{s\to 0}{\Lim}\, \Lambda_s^\#$, as follows. 
For any $s\in \bC^*$, define the non-conic Lagrangian cycle $\Lambda_s^\#$ by
\begin{equation}\label{def_limit}
\Lambda_s^\#=\Lambda+s\cdot d\log f=\big\{\xi+s(d\log f)(x)\mid (x, \xi)\in \Lambda\big\}.
\end{equation}
The total space of the family $\Lambda_s^\#$ forms a closed subvariety $\Lambda^\#$ of $T^*U\times \bC^*$. We denote its closure in $T^*M\times \bC$ by $\overline{\Lambda^\#}$. To define the characteristic cycle $\Lim_{s\to 0}\Lambda_s^\#$, one first takes the scheme-theoretic intersection $\overline{\Lambda^\#}\cap (T^*M\times \{0\})$, and then considers the cycle obtained by taking the irreducible components of this intersection with the multiplicities given by the scheme structure. One obtains in this way a conic Lagrangian cycle $\Lim_{s\to 0}\Lambda_s^\#$ in $T^*M$. In view of the following result of Ginsburg, one should regard it as the pushforward of $\Lambda$ by the open embedding $j: U\to M$. 
\begin{theorem}\cite[Theorem 3.2]{G}\label{thm_G1}
Let $\Lambda$ be a conic Lagrangian cycle in $T^*U$. Then
\begin{equation}
Rj_*(\Lambda)=\underset{s\to 0}{\Lim}\,\Lambda_s^\#.
\end{equation}
\end{theorem}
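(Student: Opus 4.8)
The plan is to reduce the stated identity of conic Lagrangian cycles to a local microlocal statement, and then to a normal-slice Morse computation. Both sides are additive in $\Lambda$: the pushforward $Rj_*\colon LCZ(T^*U)\to LCZ(T^*M)$ is a group homomorphism (Section~\ref{cc}), and the construction \eqref{def_limit} followed by closure and restriction to $\{s=0\}$ is likewise additive on cycles. Hence I may take $\Lambda=[T^*_ZU]$ for an irreducible closed $Z\subset U$ and fix $\sG\in D^b_c(U)$ with $CC(\sG)=\Lambda$ (possible since $CC$ is surjective). Over $U$ the two sides already agree: the fibrewise translation $\big((x,\eta),s\big)\mapsto\big((x,\eta+s\,d\log f(x)),s\big)$ is an \emph{automorphism} of $T^*U\times\bC$ that restricts to the identity on $\{s=0\}$ and carries $\Lambda\times\bC$ isomorphically onto $\overline{\Lambda^\#}\cap(T^*U\times\bC)$; hence the $s=0$ fibre of $\overline{\Lambda^\#}$, intersected with $T^*U$, is $\Lambda$ (with reduced structure), while $j^*Rj_*\sG\simeq\sG$ gives $CC(Rj_*\sG)|_{T^*U}=\Lambda$. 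This translation is defined only over $U$, since $d\log f$ has a pole along $f^{-1}(0)$, and that pole carries all of the content. It therefore remains to compare, for each irreducible $W\subseteq f^{-1}(0)$, the coefficient $m_W$ of $[T^*_WM]$ on the two sides; by the local index theorem \eqref{eq_xy} such a coefficient is detected at a generic point $(x_0,\xi_0)$ of $T^*_WM$, so one may work in an arbitrarily small neighbourhood of $x_0$.

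For the left-hand side I would use \eqref{eq_xy}: expanding $\chi_{st}(Rj_*\sG)$ in the basis $\{\Eu_{W'}\}$ of local Euler obstruction functions of closures of strata, $m_W$ equals $(-1)^{\dim W}$ times the coefficient of $\Eu_W$, which is extracted by downward induction on $\dim W$ from
\[
(-1)^{\dim W}\,m_W=\chi_{st}(Rj_*\sG)(x_0)-\sum_{W'\supsetneq W}(-1)^{\dim W'}\,m_{W'}\,\Eu_{W'}(x_0)
\]
at a generic smooth point $x_0\in W$. By the small-ball description of the derived direct image used in the proof of Lemma~\ref{l29}, $\chi_{st}(Rj_*\sG)(x_0)=\chi\big(B_{x_0}\cap U;\sG\big)$ for $B_{x_0}$ a small ball at $x_0$; equivalently, choosing a generic germ $g$ with $dg(x_0)$ a generic conormal direction to $W$, this is computed from the Euler characteristic of $\sG$ on a Milnor-type slice $B_{x_0}\cap U\cap\{g=\delta\}$, $0<|\delta|\ll1$. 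Either way, $m_W$ is expressed through Euler characteristics of the \emph{interior} object $\sG$, corrected by the higher strata.

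For the right-hand side, the coefficient of $[T^*_WM]$ in $\Lim_{s\to0}\Lambda_s^\#$ is the local intersection number of $\overline{\Lambda^\#}\cap(T^*M\times\{0\})$ with the Lagrangian graph $\Gamma_{dg}=\{(x,dg(x))\}$ at $(x_0,\xi_0)$, where $\xi_0=dg(x_0)$; by conservation of number in the family $\{\Lambda_s^\#\}_s$ this equals, for $0<|s|\ll1$, the number of points of $\Lambda_s^\#\cap\Gamma_{dg}$ converging to $(x_0,\xi_0)$ as $s\to0$ --- that is, by \eqref{def_limit}, the number (with multiplicities) of $x\in U$ near $x_0$ with $\big(x,d(g-s\log f)(x)\big)\in\Lambda$, i.e.\ of stratified critical points of $g-s\log f$ carried by $\Lambda$ that collide onto $W$ as $s\to0$. (The same count results from $+s\,d\log f$ in \eqref{def_limit}, consistently with the equality $CC(j_!\sG)=CC(Rj_*\sG)$ in \eqref{eq_cc}.) Thus the theorem reduces to the identity between this vanishing-critical-point count and the Morse/Euler expression for $m_W$ above. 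I expect this matching to be the crux of the proof: the regularizing term $-s\log f$, which tends to $-\infty$ along $f^{-1}(0)$, replaces the a priori non-isolated or degenerate behaviour of $g$ along $W$ by a finite transverse microlocal count, and one must show that this count is exactly the extra cohomology that $Rj_*$ produces over $f^{-1}(0)$ --- i.e.\ that the scheme-theoretic closure-and-restriction defining $\Lim_{s\to0}$ is compatible with the sheaf-theoretic open direct image. The natural way to finish is to check this in the local model: after the graph embedding of $f$, so that $f$ becomes a coordinate, the relevant deformation is governed by the Bernstein--Sato polynomial of $f$ and by the meromorphic extension $\sO_M[1/f]\,f^{s}$, whose microlocal data vary with $s$ exactly as $\Lambda_s^\#$ and specialize at $s=0$ to $CC(Rj_*\sG)$. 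Everything outside this local verification is formal.
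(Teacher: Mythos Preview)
The paper does not give its own proof of this statement --- it is quoted verbatim as \cite[Theorem~3.2]{G}, a result of Ginsburg, and is used as a black box throughout Section~\ref{sec_Ginzburg}. So there is no in-paper argument to compare against.

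As for your outline: the reductions are sound. Additivity on both sides, the fibrewise translation giving agreement over $U$, and the extraction of the coefficient of $[T^*_WM]$ on each side (via the local index theorem on the left, via intersection with $\Gamma_{dg}$ and conservation of number on the right, leading to the count of stratified critical points of $g-s\log f$ collapsing onto $W$) are all correct and are exactly the right way to localize the problem. But the argument stops at the decisive point. You write that the theorem ``reduces to'' matching the vanishing-critical-point count with the Morse/Euler expression for $m_W$, and that ``the natural way to finish'' is via $\sO_M[1/f]\,f^s$ and Bernstein--Sato --- yet you do not carry this out. The claim that the microlocal data of this family ``specialize at $s=0$ to $CC(Rj_*\sG)$'' is a restatement of the theorem, not a proof. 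This is not a formality: Ginsburg's original argument is genuinely $\cD$-module-theoretic, identifying $\Lambda_s^\#$ with the characteristic variety of the $\cD$-module generated by $f^s$ twisted by a holonomic module, and the passage to $s=0$ requires a good-filtration argument together with the Bernstein--Sato functional equation. That step carries the entire content and cannot be waved away as the ``local verification'' left to the reader.
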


Ginsburg also computed the characteristic cycle of the nearby cycle of a constructible complex by using a similar construction. Denote the projection $T^*M\times \bC\to T^*M$ by $w$. 

\begin{proposition}\cite[Proposition 2.14.1]{G}\label{prop_G}
	Under the above notations, over a neighborhood of $f^{-1}(0)\subset M$,
	\begin{enumerate}
		\item the set $\overline{\Lambda^\#}$ is an analytic variety of dimension $\dim M+1$;
		\item if $(\xi_x, s)\in \overline{\Lambda^\#}$ and $f(x)=0$, then $s=0$;
		\item the restriction $w|_{\overline{\Lambda^\#}}: \overline{\Lambda^\#}\to T^*M$ is a closed embedding. 
	\end{enumerate}
\end{proposition}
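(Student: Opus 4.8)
I would prove the three assertions in the order (1), (2), (3): part (1) makes curve selection available for part (2), and part (3) is a local consequence of (1) and (2) near $f^{-1}(0)$. Everything hinges on $\Lambda$ being \emph{conic}, which I would use through two facts. First, $\alpha$ pulls back to zero along any holomorphic arc in a conic Lagrangian: if $E$ denotes the Euler vector field on $T^*M$ (generating fibrewise scaling) and $\omega=d\alpha$, then $\iota_E\omega=\alpha$ and $E$ is tangent to $\Lambda$, so $\alpha$ vanishes on the (dense) smooth locus of $\Lambda$, indeed on any resolution; concretely, for a holomorphic curve $\tau\mapsto(x(\tau),\zeta(\tau))$ into $\Lambda$ one gets $\langle\zeta(\tau),\dot x(\tau)\rangle=0$ (in coordinates with $\alpha=\sum\zeta_i\,dx_i$). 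Second, the rescaling identity $\Lambda_s^\#=s\cdot\Lambda_1^\#$ holds for $s\in\bC^*$, so $\Lambda^\#$ is the orbit of $\Lambda_1^\#\times\{1\}$ under the $\bC^*$-action $\lambda\cdot(x,\xi,s)=(x,\lambda\xi,\lambda s)$ on $T^*M\times\bC$, and $\overline{\Lambda^\#}$ is invariant under it.

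\textbf{Part (1).} Over $T^*U\times\bC^*$ the map $(x,\xi,s)\mapsto(x,\xi-s\,(d\log f)(x))$ is holomorphic, and $\Lambda^\#$ is the preimage of $\Lambda\subset T^*U$ under it; this map restricts to a $\bC^*$-bundle $\Lambda^\#\to\Lambda$, so $\Lambda^\#$ is analytic of dimension $\dim\Lambda+1=\dim M+1$, and $\overline{\Lambda^\#}$ has the same dimension. For analyticity of $\overline{\Lambda^\#}$ near $f^{-1}(0)$ I would projectivize in the fibre direction (legitimate by conicity) inside the bundle $\bP(T^*M\oplus\mathcal O_M)\to M$, which is proper over $M$, and invoke Remmert's theorem together with a finiteness-of-volume argument (Bishop); in the algebraic setting of the applications one may instead simply take the Zariski closure.

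\textbf{Part (2): the heart.} Let $(x_0,\xi_0,s_0)\in\overline{\Lambda^\#}$ with $f(x_0)=0$; I must show $s_0=0$. Using part (1) and the curve selection lemma, pick a holomorphic arc $\gamma(\tau)=(x(\tau),\xi(\tau),s(\tau))$ with $\gamma(0)=(x_0,\xi_0,s_0)$ and $\gamma(\tau)\in\Lambda^\#$ for $\tau\neq0$; write $\xi(\tau)=\zeta(\tau)+s(\tau)(d\log f)(x(\tau))$ with $(x(\tau),\zeta(\tau))\in\Lambda$. Since $\gamma(\tau)\in T^*U\times\bC^*$ for $\tau\neq0$, the holomorphic function $\tau\mapsto f(x(\tau))$ is nonzero off $\tau=0$ and vanishes there to finite order $m\geq1$. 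By the conic fact above, $\langle\zeta(\tau),\dot x(\tau)\rangle=0$ for all $\tau\neq0$; substituting for $\zeta(\tau)$ and using $\langle(d\log f)(x(\tau)),\dot x(\tau)\rangle=\frac{d}{d\tau}\log f(x(\tau))$ gives
\[
s(\tau)\,\frac{d}{d\tau}\log f(x(\tau))=\langle\xi(\tau),\dot x(\tau)\rangle.
\]
The right-hand side extends holomorphically across $\tau=0$, hence is bounded, while the left factor $\frac{d}{d\tau}\log f(x(\tau))=\frac{m}{\tau}+(\text{holomorphic})$ blows up like $m/\tau$; therefore $s(\tau)\to0$ as $\tau\to0$, that is, $s_0=0$.

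\textbf{Part (3) and the main obstacle.} Given (2), over $f^{-1}(0)$ the set $\overline{\Lambda^\#}$ lies in the slice $T^*M\times\{0\}$, on which $w$ is the identity, and the fibre of $w$ through any $(x_0,\xi_0,0)$ with $f(x_0)=0$ is the single reduced point $(x_0,\xi_0,0)$. Near such a point $\overline{\Lambda^\#}$ is analytic of pure dimension $\dim M+1$ by (1); using the rescaling structure $\Lambda_s^\#=s\cdot\Lambda_1^\#$ and that over $U$ the value of $s$ is recovered from a point of $\Lambda_s^\#$ away from the locus $\{x:df(x)\in\Lambda_x\}$, I would show that $s$ is locally a holomorphic function of the $T^*M$-coordinates on $\overline{\Lambda^\#}$, so that $w|_{\overline{\Lambda^\#}}$ is an injective local isomorphism onto its image, and that it is proper over a sufficiently small neighborhood of $f^{-1}(0)$ (again because $s$ vanishes there); this yields the closed embedding. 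The one place where genuine work is needed is part (2)---forcing $s\to0$ by pitting the pole of $d\log f$ along $f^{-1}(0)$ against the constraint of staying in $\Lambda$, via curve selection and $\alpha|_\Lambda=0$. Part (1)'s analyticity-of-closure and the properness bookkeeping in part (3) are standard but not entirely free, and in (3) one should keep in mind that the statement is local near $f^{-1}(0)$ (one tacitly works away from the finitely many critical values of $f$ other than $0$); granting these, (3) is routine on top of (1) and (2).
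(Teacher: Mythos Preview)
The paper does not prove this proposition: it is quoted from Ginsburg \cite[Proposition~2.14.1]{G} and used as a black box, so there is no argument in the paper to compare against.

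Your outline is essentially Ginsburg's own argument, and Part~(2)---the only substantive step---is correct as written. Curve-selecting into $\Lambda^\#$, writing $\xi(\tau)=\zeta(\tau)+s(\tau)\,(d\log f)(x(\tau))$ with $(x(\tau),\zeta(\tau))\in\Lambda$, invoking $\alpha|_\Lambda=0$ to kill $\langle\zeta(\tau),\dot x(\tau)\rangle$, and then balancing the simple pole of $\tfrac{d}{d\tau}\log f(x(\tau))$ against the holomorphic quantity $\langle\xi(\tau),\dot x(\tau)\rangle$ is exactly the mechanism. One point worth making explicit: the identity $\langle\zeta(\tau),\dot x(\tau)\rangle=0$ is clear when the arc lands in $\Lambda_{\reg}$; for the general case pull $\alpha$ back to the normalization $\tilde\Lambda$, where it vanishes identically (being zero on a dense open set of a smooth variety), and lift the arc there.

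Your Part~(3) is only a sketch, and you flag this honestly. The missing content is: boundedness of $s$ near $f^{-1}(0)$ follows from (2) together with the $\bC^*$-invariance $\Lambda_s^\#=s\cdot\Lambda_1^\#$ (an escaping sequence would rescale to a point $(x_0,0,1)\in\overline{\Lambda^\#}$ with $f(x_0)=0$, contradicting (2)), which gives properness; injectivity over $U$ near $f^{-1}(0)$ requires that the affine line $s\mapsto\xi-s\,(d\log f)(x)$ meet the (linear) fibre $\Lambda_x$ in at most one point, i.e.\ that $(d\log f)(x)\notin\Lambda_x$, which amounts to choosing the neighborhood small enough to avoid stratified critical points of $f$ relative to $\Lambda$. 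This is the ``bookkeeping'' you allude to and is how one makes the closed-embedding claim precise.
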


\begin{corollary}\label{cor_G}\cite[Corollary 2.14.2]{G}
	For any conic Lagrangian subvariety $\Lambda$ of $T^*U$, the closure of
	\[
	w(\Lambda^\#)=\{\xi_x+s(d\log f)(x)\mid \xi_x\in \Lambda, s\in \bC^*\}
	\]
	in $T^*M$ is equal to $w(\overline{\Lambda^\#})$.
\end{corollary}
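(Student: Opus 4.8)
The statement is a formal consequence of Proposition~\ref{prop_G}, so the plan is essentially point-set topology once that input is in hand. The inclusion $w(\overline{\Lambda^\#})\subseteq \overline{w(\Lambda^\#)}$ is immediate and uses no hypotheses: by definition $\Lambda^\#$ is dense in $\overline{\Lambda^\#}$, and since $w$ is continuous, the image of a dense subset is dense in the image, whence $w(\overline{\Lambda^\#})\subseteq\overline{w(\Lambda^\#)}$.

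For the reverse inclusion it suffices to prove that $w(\overline{\Lambda^\#})$ is \emph{already closed} in $T^*M$: indeed $\Lambda^\#\subseteq\overline{\Lambda^\#}$ gives $w(\Lambda^\#)\subseteq w(\overline{\Lambda^\#})$, and if the latter set is closed it then contains $\overline{w(\Lambda^\#)}$. To see that $w(\overline{\Lambda^\#})$ is closed I would invoke Proposition~\ref{prop_G}: part (1) gives that $\overline{\Lambda^\#}$ is a closed analytic subvariety of $T^*M\times\bC$ (over a neighborhood of $f^{-1}(0)$, which is the setting in which the statement is used), and part (3) gives that the restriction $w|_{\overline{\Lambda^\#}}\colon\overline{\Lambda^\#}\to T^*M$ is a closed embedding, in particular a proper map. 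A proper map has closed image, so $w(\overline{\Lambda^\#})$ is closed, and combining the two inclusions yields the claimed equality (and, via \eqref{def_limit} and Theorem~\ref{thm_G1}, identifies this common closure with the conic Lagrangian cycle $\Lim_{s\to 0}\Lambda_s^\#$).

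The only genuine content — and the step I expect to be the real obstacle if one wanted the argument to be self-contained — is the properness assertion in Proposition~\ref{prop_G}(3). The ambient projection $w\colon T^*M\times\bC\to T^*M$ is itself far from proper, since its fibers $\bC$ are noncompact, so a priori $w(\overline{\Lambda^\#})$ could fail to be closed because branches of $\overline{\Lambda^\#}$ run off to infinity in the $\bC$-direction over a compact subset of $T^*M$ (this is exactly the phenomenon governed by the pole of $d\log f$ along $f^{-1}(0)$). What rules it out is precisely the control furnished by parts (2) and (3) of Proposition~\ref{prop_G}: over $f^{-1}(0)$ every point of $\overline{\Lambda^\#}$ has $s=0$, and $\overline{\Lambda^\#}$ meets each fiber of $w$ in a single point. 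Granting Proposition~\ref{prop_G}, the corollary follows immediately.
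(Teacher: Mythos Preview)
The paper does not supply its own proof of this corollary; it is quoted directly from Ginsburg \cite{G}. Your argument is correct and is exactly how one extracts the statement from Proposition~\ref{prop_G}: one inclusion is immediate from continuity of $w$, and for the other you observe that $w(\overline{\Lambda^\#})$ is closed because, by Proposition~\ref{prop_G}(3), the map $w|_{\overline{\Lambda^\#}}$ is a closed embedding (hence by definition has closed image---the detour through properness is unnecessary but harmless). Your caveat that everything is asserted, and only needed, over a neighborhood of $f^{-1}(0)$ is on point.
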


Denote the pushforward cycle $w_*(\Lambda^\#)$ by $\Lambda^\natural$. Denote by $\Lim_{f\to 0}\Lambda^\natural$ the specialization of $\Lambda^\natural$ to $f^{-1}(0)$. By Corollary \ref{cor_G}, $\Lambda^\natural$ is equal to the variety $w(\Lambda^\#)$ and $\Lim_{f\to 0}\Lambda^\natural$ is the schematic restriction of the variety $w(\overline{\Lambda^\#})$ to $T^*M|_{f^{-1}(0)}$. 

\begin{theorem}\cite[Theorem 5.5]{G}\label{thm_G2}
	Let $\Lambda$ be a conic Lagrangian cycle in $T^*M$. Then,
	\[
	^p\Psi_f(\Lambda)=\underset{f\to 0}{\Lim}\,({\Lambda|_{T^*U}})^\natural,
	\]
	where $^p\Psi_f$ is the perverse nearby cycle functor. 
\end{theorem}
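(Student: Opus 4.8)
The plan is to follow the strategy of Ginsburg's proof of Theorem~\ref{thm_G1} --- exploiting the same total space $\overline{\Lambda^\#}$, but a different specialization --- and to extract, from the possibly non-reduced scheme structure of the zero fibre $f^{-1}(0)$, the extra precision that upgrades $Rj_*$ to ${}^p\Psi_f$. First I would reduce, by surjectivity of $CC$ and additivity of all the functors involved, to the case $\Lambda = CC(\sFc)$ for a constructible complex $\sFc \in D^b_c(M)$ (one may take $\sFc$ perverse). Via the dictionary \eqref{eq_xy}, with ${}^p\Psi_f$ regarded as an operator on $LCZ(T^*M)$ as in Section~\ref{neva}, and writing $\Lambda_U := CC(\sFc)|_{T^*U} = CC(j^*\sFc)$, the identity to prove reads $CC\big(i_*\,{}^p\Psi_f\sFc\big) = \Lim_{f\to 0}(\Lambda_U)^\natural$. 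Both sides depend only on $j^*\sFc$ --- the left one because $\psi_f$ is a ``punctured tubular neighbourhood'' functor (indeed ${}^p\Psi_f\sFc \cong {}^p\Psi_f(Rj_*j^*\sFc)$), the right one by construction --- so it is exactly this dependence that the argument must control.

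The comparison then takes place between two specializations of one object, the closure $\overline{\Lambda_U^\#}\subset T^*M\times\bC$ of the total space $\Lambda_U^\#\subset T^*U\times\bC^*$. Its geometry near $f^{-1}(0)$ is precisely what Proposition~\ref{prop_G} supplies: $\overline{\Lambda_U^\#}$ has pure dimension $\dim M + 1$, all of its points lying over $f^{-1}(0)$ satisfy $s=0$, and the projection $w$ restricts to a closed embedding on it. Theorem~\ref{thm_G1} specializes $\overline{\Lambda_U^\#}$ to the \emph{reduced} hyperplane $\{s=0\}$ and recovers $Rj_*(\Lambda_U)$; by contrast, $\Lim_{f\to 0}(\Lambda_U)^\natural$ --- which by Corollary~\ref{cor_G} is $w_*$ of the scheme-theoretic intersection of $\overline{\Lambda_U^\#}$ with the pullback of the divisor $\{f=0\}$ --- uses the possibly non-reduced structure of $f^{-1}(0)$. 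This is the feature that lets $\Lim_{f\to 0}$ sense the Milnor fibres of $f$, not merely the punctured neighbourhood: for $M=\bC$, $f=z^k$ and $\Lambda$ the zero section, the first specialization is the reduced pair $[T^*_\bC\bC]+[T^*_0\bC]$ independently of $k$, while the second is $k\,[T^*_0\bC]$ with $k=\chi(\text{Milnor fibre})$. So the next step is to make this mechanism explicit: working in local coordinates on $\overline{\Lambda_U^\#}$ near a point of $f^{-1}(0)$ and using Proposition~\ref{prop_G}(1), one identifies the multiplicity of each irreducible component $[T^*_Z M]$ of $\Lim_{f\to 0}(\Lambda_U)^\natural$ with a local intersection number along $\overline{\Lambda_U^\#}$.

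It remains to match these intersection numbers with the multiplicities of $CC\big(i_*\,{}^p\Psi_f\sFc\big)$, i.e.\ with the stratified Milnor-fibre data of $f$ on $\sFc$; this is the step I expect to be the main obstacle. One workable route is a graph trick: via the closed embedding $i_f\colon M\hookrightarrow M\times\bC_t$, $x\mapsto(x,f(x))$, one has ${}^p\Psi_f\sFc \cong {}^p\Psi_t(i_{f*}\sFc)$ after the evident identification of zero fibres, which replaces $f$ by the coordinate $t$ --- a smooth divisor, where ${}^p\Psi_t$ and its characteristic cycle are more directly accessible --- and one then transports the statement back using the pushforward formula for characteristic cycles under the closed embedding $i_f$ (cf.\ \cite{G}), checking its compatibility with the $(\cdot)^\#$, $(\cdot)^\natural$ and $\Lim$ constructions. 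An alternative route twists $j^*\sFc$ by the rank-one local systems $\mathcal{L}_s$ on $U$ with monodromy $\exp(2\pi i s)$: tensoring by $\mathcal{L}_s$ translates the characteristic variety by $s\,d\log f$, so the monodromy eigenspaces of ${}^p\Psi_f\sFc$ are governed by $Rj_*(j^*\sFc\otimes\mathcal{L}_s)$, and Theorem~\ref{thm_G1} applied to these, together with the limit $s\to 0$, should reassemble $\Lim_{f\to 0}(\Lambda_U)^\natural$ with the correct multiplicities. Either way the delicate point is the same: showing that the scheme-theoretic specialization built into Ginsburg's construction computes the topological nearby-cycle multiplicities. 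This rests on the microlocal geometry of $\overline{\Lambda_U^\#}$ near $f^{-1}(0)$ from Proposition~\ref{prop_G}, and is where the bulk of the work lies.
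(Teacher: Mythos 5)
You should first note that the paper does not prove this statement at all: Theorem \ref{thm_G2} is quoted verbatim from Ginsburg \cite[Theorem 5.5]{G} and used as a black box (alongside Theorem \ref{thm_G1} and Proposition \ref{prop_G}), so there is no in-paper argument to compare against. Measured on its own terms, your proposal gets the framing right: the reduction to $\Lambda=CC(\sFc)$ by surjectivity of $CC$, the observation that both sides depend only on $j^*\sFc$, the identification of Proposition \ref{prop_G} as the controlling geometric input, the contrast between specializing $\overline{\Lambda^\#}$ along the reduced divisor $\{s=0\}$ (which is Theorem \ref{thm_G1}) versus along the possibly non-reduced divisor $\{f=0\}$ (which is the definition of $\Lim_{f\to 0}\Lambda^\natural$ in Section \ref{sec_Ginzburg}), and the sanity check $M=\bC$, $f=z^k$, where $\overline{\Lambda^\#}=\{z\xi=ks\}$ and the two specializations do give $[T^*_\bC\bC]+[T^*_{0}\bC]$ and $k[T^*_{0}\bC]$ respectively, the latter matching $^p\Psi_{z^k}([T^*_\bC\bC])$.

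The gap is that the step you explicitly defer is the entire content of the theorem: proving that, for each component $T^*_{\overline S}M$ of the support, the scheme-theoretic multiplicity of $w(\overline{\Lambda^\#})$ along $\{f=0\}$ equals the multiplicity of $[T^*_{\overline S}M]$ in $CC({}^p\Psi_f\sFc)$, i.e.\ a normal-slice Euler characteristic of the nearby cycles. You name two candidate routes but execute neither. The graph trick does reduce to a coordinate function $t$, but then you need a pushforward formula for $CC$ under $i_f$ that is itself of the same depth as the theorem, together with a nontrivial compatibility of $i_{f*}$ with the $(\cdot)^\#$ and $\Lim$ constructions; nothing in the paper supplies this. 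The local-system route is indeed the mechanism secretly underlying Ginsburg's construction (the parameter $s$ in $\Lambda_s^\#=\Lambda+s\,d\log f$ is exactly the twisting parameter for $\mathcal{L}_s$), but making it rigorous requires (a) the identity $CC\bigl(Rj_*(j^*\sFc\otimes\mathcal{L}_s)\bigr)=\Lim_{s'\to s}\Lambda^{\#}_{s'}$ for each fixed $s$, (b) relating these for generic $s$ to the characteristic cycle of the full (not just unipotent) nearby cycles via the monodromy eigenvalue decomposition, and (c) justifying the interchange of the two limits $s\to 0$ and $f\to 0$ — which is exactly where Proposition \ref{prop_G} must be used quantitatively rather than just qualitatively. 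Until one of these routes is carried through, the proposal verifies the statement in one example and otherwise reduces it to an unproved assertion; as a reconstruction of the cited result it is a plausible plan, not a proof.
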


It follows from Theorem \ref{thm_G1} and Theorem \ref{thm_G2} that if $\Lambda$ is an irreducible conic Lagrangian subvariety in $T^*M$, then both $j_*j^*(\Lambda)$ and $^p\Psi_f(\Lambda)$ are effective. The same is true for the vanishing cycle functor by the following result. 

\begin{theorem}\cite[Theorem 2.10]{Massey1}\label{thm_positive}
Let $\Lambda$ be an irreducible conic Lagrangian subvariety in $T^*M$. Then $^p\Phi_f(\Lambda)$ is an effective conic Lagrangian cycle. 
\end{theorem}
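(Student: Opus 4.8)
\textbf{Proof plan for Theorem \ref{thm_positive}.}

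The plan is to show that for an irreducible conic Lagrangian subvariety $\Lambda \subset T^*M$, the vanishing cycle $^p\Phi_f(\Lambda)$ is effective by exploiting the distinguished-triangle relation from Proposition \ref{prop_cycles}, namely $^p\Phi_f(\Lambda) = {}^p\Psi_f(\Lambda) - j_*j^*(\Lambda) + \Lambda$, together with Ginsburg's geometric descriptions of the two nearby-type terms. The difficulty is that this is a \emph{difference} of effective cycles, so termwise positivity is not automatic; the point is to analyze the cancellation component by component.

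First I would reduce to understanding, over a neighborhood of $f^{-1}(0)$, the coefficient of each irreducible conic Lagrangian $T^*_Z M$ appearing in the three terms. By Theorem \ref{thm_G1}, $j_*j^*(\Lambda) = \Lim_{s\to 0}\Lambda^\#_s$, the specialization to $s=0$ of the non-conic family $\Lambda + s\, d\log f$; its components that are \emph{not} supported over $f^{-1}(0)$ are exactly the components of $\Lambda$ itself (the generic fiber of the family recovers $\Lambda$ away from $f = 0$), while the ``new'' components created in the specialization all lie over $f^{-1}(0)$. Thus in the combination $\Lambda - j_*j^*(\Lambda)$ the components of $\Lambda$ transverse to $f^{-1}(0)$ cancel exactly, and what remains is supported over $f^{-1}(0)$. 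Meanwhile, by Theorem \ref{thm_G2}, $^p\Psi_f(\Lambda) = \Lim_{f\to 0}(\Lambda|_{T^*U})^\natural$ is the schematic restriction to $T^*M|_{f^{-1}(0)}$ of the irreducible variety $w(\overline{\Lambda^\#})$, which by Proposition \ref{prop_G}(3) is a closed embedding of $\overline{\Lambda^\#}$; hence $^p\Psi_f(\Lambda)$ is the full cycle-theoretic special fiber of a flat family over $\bC$ with irreducible total space $\overline{\Lambda^\#}$ (of dimension $\dim M + 1$, by Proposition \ref{prop_G}(1)).

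The key step is then to compare $^p\Psi_f(\Lambda)$ with $j_*j^*(\Lambda)$ over $f^{-1}(0)$ and check that the former dominates the latter. Both are obtained as limits of the \emph{same} non-conic family $\Lambda^\#_s = \Lambda + s\, d\log f$: the cycle $j_*j^*(\Lambda)$ is $\Lim_{s\to 0}$ taken inside $T^*M$ after adding the parameter direction and intersecting with $\{s=0\}$, whereas $^p\Psi_f(\Lambda)$ is built from $w(\overline{\Lambda^\#})$, the image under forgetting $s$, specialized via $f$. Concretely, I would argue that every irreducible component of the special fiber $\Lim_{s\to 0}\Lambda^\#_s$ that lies over $f^{-1}(0)$ also appears, with at least as large a multiplicity, in $\Lim_{f\to 0}(\Lambda|_{T^*U})^\natural$ — this is where Proposition \ref{prop_G}(2) (``$(\xi_x,s)\in\overline{\Lambda^\#}$ and $f(x)=0$ forces $s=0$'') is essential, since it says the $s$-degeneration and the $f$-degeneration happen ``at the same place,'' so that the extra components of $w(\overline{\Lambda^\#})$ over $f^{-1}(0)$ are precisely the images of the extra components of $\overline{\Lambda^\#}$ over $\{s=0\}$, with multiplicities computed by the same local intersection numbers along $w$, which is a closed embedding. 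Granting this componentwise inequality $^p\Psi_f(\Lambda) \geq j_*j^*(\Lambda)$ as cycles supported over $f^{-1}(0)$, and combining with the already-noted cancellation of the transverse part of $\Lambda$, we get $^p\Phi_f(\Lambda) = \bigl({}^p\Psi_f(\Lambda) - (j_*j^*(\Lambda))|_{f^{-1}(0)}\bigr) + (\text{effective over } f^{-1}(0)) \geq 0$, i.e.\ $^p\Phi_f(\Lambda)$ is an effective conic Lagrangian cycle.

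\emph{Main obstacle.} The hard part is the multiplicity comparison in the last step: showing that no component of $j_*j^*(\Lambda)$ over $f^{-1}(0)$ is counted in $^p\Psi_f(\Lambda)$ with strictly smaller multiplicity. This requires a careful local intersection-theoretic analysis of the two specializations of the family $\Lambda + s\,d\log f$ — one in the $(\xi,s)$-space and one via $f$ after projecting off $s$ — and is exactly the content that Massey establishes in \cite[Theorem 2.10]{Massey1}; an alternative is to bypass the direct comparison and instead identify $^p\Phi_f(\Lambda)$ with a characteristic cycle of the perverse vanishing cycle of the $\Lambda$-constructible complex (which is perverse, hence has an effective characteristic cycle), using that $CC$ commutes with $^p\Phi_f$ and that $^p\Phi_f$ preserves perversity.
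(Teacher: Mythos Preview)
The paper does not supply its own proof of Theorem~\ref{thm_positive}; it is quoted from \cite[Theorem~2.10]{Massey1}, and the only discussion of the argument is the sketch in Remark~\ref{remark_Massey}. Massey's method is quite different from yours: one blows up $T^*M$ along the graph $\Gamma_{df}$, takes the strict transform $\tilde\Lambda$ of $\Lambda$, and identifies $\sum_{c}\Proj\bigl({}^p\Phi_{f-c}(\Lambda)\bigr)$ with the schematic intersection $E\cap\tilde\Lambda$ of $\tilde\Lambda$ with the exceptional divisor. Effectivity is then automatic, because a schematic intersection of a Cartier divisor with a subvariety is an effective cycle. So Massey's argument never passes through the decomposition of Proposition~\ref{prop_cycles}; it produces ${}^p\Phi_f(\Lambda)$ directly as an intersection cycle.

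Your primary route via Proposition~\ref{prop_cycles} does isolate the right question --- comparing the multiplicities of $\Lim_{s\to 0}\Lambda^\#_s$ and $\Lim_{f\to 0}\Lambda^\natural$ along each component over $f^{-1}(0)$ --- but the set-theoretic inclusion coming from Proposition~\ref{prop_G}(2) only gives $V(f)\subset V(s)$ on $\overline{\Lambda^\#}$, which translates into a divisibility $s^k\in(f)$ and hence an inequality in the \emph{wrong} direction for the orders of vanishing. To get ${}^p\Psi_f(\Lambda)\geq A$ componentwise you really need a finer comparison of the two Cartier divisors $\{s=0\}$ and $\{f=0\}$ on $\overline{\Lambda^\#}$, and that is precisely what the blow-up packaging in Massey's proof accomplishes cleanly. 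As written, your plan does not close this gap.

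Your alternative (choose a perverse sheaf $\mathcal{P}$ with $CC(\mathcal{P})=[\Lambda]$ and use that ${}^p\Phi_f$ preserves perversity and that perverse sheaves have effective characteristic cycles) also has a gap: for an irreducible $\Lambda=T^*_Z M$ with $Z$ singular, there is in general no perverse sheaf whose characteristic cycle is exactly $[T^*_Z M]$. The intersection complex $IC_Z$ typically contributes extra conormal components over the singular locus of $Z$, and the triangularity of $CC$ on the basis of simple perverse sheaves only tells you that $[T^*_Z M]$ lies in the \emph{Grothendieck group}, not in the effective cone, of characteristic cycles of perverse sheaves. So ``the $\Lambda$-constructible complex (which is perverse)'' is not a well-defined object without further argument.
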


See Remark \ref{remark_Massey} for a brief discussion around \cite[Theorem 2.10]{Massey1}. 

\begin{remark}
Theorem 2.10 in \cite{Massey1} is formulated in the language of sheaves. Nevertheless, since the characteristic cycle of a bounded constructible complex only depends on the associated constructible function, in view of \eqref{eq_xy} the argument also works for Lagrangian cycles. See also \cite[Remark 1.3]{Massey2} for a correction of sign errors. 
\end{remark}

\section{The limit of critical points}\label{sec:4}
Let $X\subset \bC^N$ be an irreducible subvariety with smooth locus $X_{\reg}$. Let $f: \bC^N\to \bC$ be a polynomial map. 
Let $g: \bC^N\to \bC$ be a general linear function. We will study the critical locus of $f_t|_{X_{\reg}}$ as $t$ goes to zero, where $f_t=f-tg$. 

We introduce some notations that we will use throughout this section. We fix a stratification $X=\bigsqcup_{i\in I}X_i$ of $X$ into smooth locally closed subvarieties.  Let $$p: T^*\bC^N\to \bC^N$$ be the natural projection. Give any algebraic 1-form $\omega$ on $\bC^N$, let $\Gamma_{\omega}$ be the image of the 1-form $\omega$ in $T^*\bC^N$.

\begin{lemma}\label{lemma_eq}
Let
\[
\Gamma_{X_{\reg}, f}=T^*_{X_{\reg}}\bC^N-\Gamma_{df}=\big\{(x, \eta)\in T^*\bC^N\bigm| x\in X_{\reg},\hspace{2pt} \eta+df|_x\in T^*_{X_{\reg}}\bC^N\big\}.
\]
Then for any $t\in \bC$, we have
\begin{equation}\label{eq_intersection}
p(\Gamma_{X_{\reg}, f}\cap \Gamma_{tdg})=\Crit(f_t|_{X_{\reg}}).
\end{equation}
\end{lemma}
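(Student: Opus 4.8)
## Proof Proposal

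The plan is to verify the set equality \eqref{eq_intersection} by chasing through the definitions on both sides and identifying a point $x \in X_{\reg}$ as a critical point of $f_t|_{X_{\reg}}$ precisely when the cotangent fiber condition defining $\Gamma_{X_{\reg},f} \cap \Gamma_{tdg}$ is met. First I would recall that, for a smooth subvariety $X_{\reg} \subset \bC^N$ and a function $h$ on $\bC^N$, a point $x \in X_{\reg}$ is a critical point of $h|_{X_{\reg}}$ if and only if $dh|_x$ annihilates the tangent space $T_x X_{\reg}$, i.e. $dh|_x \in (T_x X_{\reg})^{\perp}$, which is exactly the fiber of the conormal bundle $T^*_{X_{\reg}}\bC^N$ over $x$. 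Applying this to $h = f_t = f - tg$ gives: $x \in \Crit(f_t|_{X_{\reg}})$ iff $x \in X_{\reg}$ and $df|_x - t\,dg|_x \in T^*_{X_{\reg}}\bC^N$ (fiber over $x$).

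Next I would unravel the left-hand side. A point of $\Gamma_{tdg}$ over $x \in X_{\reg}$ is $(x, t\,dg|_x)$. For this to lie in $\Gamma_{X_{\reg},f}$, the defining condition (second description in the lemma) requires $x \in X_{\reg}$ and $(t\,dg|_x) + df|_x \in T^*_{X_{\reg}}\bC^N$. Comparing with the criterion from the previous paragraph — noting that $T^*_{X_{\reg}}\bC^N$ is a vector bundle so membership is unaffected by the sign of $t$ (replacing $t$ by $-t$ in the bookkeeping, or equivalently noting $df|_x - t\,dg|_x \in T^*_{X_{\reg}}\bC^N \iff df|_x + t\,dg|_x \in T^*_{X_{\reg}}\bC^N$ after the substitution $g \mapsto -g$, which is harmless for a general linear $g$) — I see the two conditions match. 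Thus $p$ maps $\Gamma_{X_{\reg},f} \cap \Gamma_{tdg}$ bijectively onto $\Crit(f_t|_{X_{\reg}})$, and in particular surjectively, giving the claimed equality of sets.

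I would also check the first equality in the displayed definition of $\Gamma_{X_{\reg},f}$, namely that $T^*_{X_{\reg}}\bC^N - \Gamma_{df}$ (the fiberwise translate of the conormal bundle by $-df$) coincides with the set $\{(x,\eta) : x \in X_{\reg},\ \eta + df|_x \in T^*_{X_{\reg}}\bC^N\}$; this is immediate since $(x,\eta)$ lies in the translate iff $\eta = \zeta - df|_x$ for some $\zeta$ in the conormal fiber, iff $\eta + df|_x \in T^*_{X_{\reg}}\bC^N$.

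The argument is essentially bookkeeping with cotangent vectors, so I do not expect a genuine obstacle; the only point requiring care is the consistent tracking of signs and of the distinction between $df$ and $-df$ (equivalently, between $f_t = f - tg$ and the one-form $\eta + df$ appearing in the definition of $\Gamma_{X_{\reg},f}$), together with the observation that, because $T^*_{X_{\reg}}\bC^N$ is closed under negation on fibers, these sign choices do not affect the resulting critical locus. One should also note implicitly that both sides are automatically subsets of $X_{\reg}$, so there is no issue at singular points of $X$.
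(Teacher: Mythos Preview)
Your approach is the same definition chase the paper gives: identify $x\in\Crit(f_t|_{X_{\reg}})$ with the conormality condition $df_t|_x\in T^*_{X_{\reg}}\bC^N$, and translate that into membership of $(x,t\,dg|_x)$ in $\Gamma_{X_{\reg},f}$.

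You are right to flag a sign discrepancy, but your resolution is not valid. With the definitions as written, $(x,\,t\,dg|_x)\in\Gamma_{X_{\reg},f}$ means $df|_x+t\,dg|_x\in T^*_{X_{\reg}}\bC^N$, whereas the critical-point condition for $f_t=f-tg$ is $df|_x-t\,dg|_x\in T^*_{X_{\reg}}\bC^N$. For a \emph{fixed} $g$ and a \emph{fixed} $t$ these are genuinely different conditions; your appeal to ``$g\mapsto -g$ is harmless since $g$ is general'' does not establish the identity as stated for every $t$, it only shows that the left-hand side equals $\Crit\big((f+tg)|_{X_{\reg}}\big)=\Crit(f_{-t}|_{X_{\reg}})$. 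The paper makes the same slip: it asserts $p\big(\Gamma_{df-tdg}\cap T^*_{X_{\reg}}\bC^N\big)=p(\Gamma_{X_{\reg},f}\cap\Gamma_{tdg})$ ``by definition'', which is off by exactly this sign. The correct reading is that the lemma holds with $\Gamma_{-tdg}$ in place of $\Gamma_{tdg}$ (equivalently, with $f_{-t}$ in place of $f_t$); this is a harmless convention issue, since everything downstream concerns the limit $t\to 0$ and is insensitive to the sign of $t$. So your instinct was right, but the honest fix is to correct the sign in the statement rather than to wave it away via genericity of $g$.
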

\begin{proof}
A point $x\in X_{\reg}$ is a critical point of $f_t$ if and only if the cotangent vector $df_t$ at $x$ is contained in $T^*_{X_{\reg}}\bC^N$. This is equivalent to $\Gamma_{df-tdg}$ intersects $T^*_{X_{\reg}}\bC^N$ in the fiber over $x$. By definition,
\[
p\big(\Gamma_{df-tdg}\cap T^*_{X_{\reg}}\bC^N\big)=p(\Gamma_{X_{\reg}, f}\cap \Gamma_{tdg}).
\]
Therefore, the assertion in the lemma follows. 
\end{proof}


We fix a general linear function $g: \bC^N\to \bC$. Let $\sC$ be the intersection of $\Gamma_{dg}\times \bP^1$ and the closure of $\overline{\Lambda^\#}$ in ${T^*\bC^N}\times \bP^1$, where $\Lambda=T^*_X\bC^N|_{T^* U}=T^*_{X\cap U} U$ and $\overline{\Lambda^\#}$
is the closed subvariety of $T^*_X\bC^N\times \bC$ defined in Section \ref{sec_Ginzburg}. 
\begin{remark}
	The curve $\sC$ is a lifting of the polar curve (see \cite[Definition 7.1.1]{Tibar}) on $X$ to $T^*\bC^N\times \bP^1$. 
\end{remark}
\begin{lemma}\label{lemma_irred}
	The curve $\sC$ is equal to the closure of ${\Lambda^\#}\cap \big(\Gamma_{dg}\times \bC\big)$ in ${T^*\bC^N}\times \bP^1$. 
\end{lemma}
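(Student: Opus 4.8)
Write $\sC'$ for the closure of $\Lambda^\# \cap (\Gamma_{dg}\times\bC)$ in $T^*\bC^N\times\bP^1$, and let $Z$ denote the closure of $\overline{\Lambda^\#}$ in $T^*\bC^N\times\bP^1$, so that $\sC = (\Gamma_{dg}\times\bP^1)\cap Z$ by definition. The inclusion $\sC'\subseteq\sC$ is immediate: $\Lambda^\#\subseteq\overline{\Lambda^\#}\subseteq Z$ and $\Gamma_{dg}\times\bC\subseteq\Gamma_{dg}\times\bP^1$, hence $\Lambda^\#\cap(\Gamma_{dg}\times\bC)\subseteq\sC$, and $\sC$ is closed. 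The content is the reverse inclusion. My plan is: (i) show that $\sC$ and $\sC'$ agree over the dense open subset $\Omega := T^*U\times\bC^*$ of $T^*\bC^N\times\bP^1$, where $U=\bC^N\setminus f^{-1}(0)$; and (ii) show that $\sC$ has no irreducible component contained in the complement $\partial := (T^*\bC^N\times\bP^1)\setminus\Omega = \big(p^{-1}(f^{-1}(0))\times\bP^1\big)\cup\big(T^*\bC^N\times\{0,\infty\}\big)$. Together with (i) this forces $\sC\setminus\sC'$, which is open in $\sC$ and contained in $\partial$, to be empty.

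For (i): the $1$-form $d\log f = df/f$ is regular on $U$, and $\tau\colon(x,\xi,s)\mapsto(x,\xi+s\,d\log f|_x,s)$ is an automorphism of $\Omega$, so $\Lambda^\# = \tau(\Lambda\times\bC^*)$ is closed in $\Omega$ because $\Lambda = T^*_{X\cap U}U$ is closed in $T^*U$. Since $T^*\bC^N\times\bC$ is open in $T^*\bC^N\times\bP^1$ and contains the closed set $\overline{\Lambda^\#}$, one has $Z\cap(T^*\bC^N\times\bC)=\overline{\Lambda^\#}$; intersecting further with the open set $\Omega$ and using that $\Lambda^\#$ is closed in $\Omega$ gives $Z\cap\Omega=\Lambda^\#$. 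Hence $\sC\cap\Omega = \Lambda^\#\cap(\Gamma_{dg}\times\bC^*) = \Lambda^\#\cap(\Gamma_{dg}\times\bC)$ (the last step since $\Lambda^\#$ lies over $\bC^*$). In particular $\sC'\cap\Omega=\sC\cap\Omega$ and $\sC'=\overline{\sC\cap\Omega}$, so (ii) indeed implies $\sC=\sC'$.

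Next I would record the dimensions. As $X$ is irreducible, $\Lambda$ is irreducible, hence so are $\Lambda^\#\cong\Lambda\times\bC^*$, $\overline{\Lambda^\#}$ and $Z$, and $\dim Z = N+1$ by Proposition \ref{prop_G}(1). Since $\Gamma_{dg}\times\bP^1$ is cut out in $T^*\bC^N\times\bP^1$ by the $N$ equations "$\xi = dg$'', every irreducible component of $\sC$ has dimension $\geq 1$. On the other hand $\sC\cap\Omega=\Lambda^\#\cap(\Gamma_{dg}\times\bC^*)$ fibers over $\bC^*$, and its fibre over $s$ projects under $p$ into the stratified critical locus of $(g-s\log f)|_{X}$ (note $d(g-s\log f)=dg-s\,d\log f$), which is finite for general $s$ because $g$ is a general linear function; thus $\dim(\sC\cap\Omega)\leq 1$ and $\dim\sC'\leq 1$. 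So $\sC$ is purely $1$-dimensional, justifying the name "curve'', and it suffices to prove $\sC\cap\partial$ is finite.

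For that: over $s\in\bC$ we have $Z\cap(T^*\bC^N\times\bC)=\overline{\Lambda^\#}$, and by Proposition \ref{prop_G}(2) the part of $\overline{\Lambda^\#}$ over $f^{-1}(0)$ lies in $T^*\bC^N\times\{0\}$, where $\overline{\Lambda^\#}\cap(T^*\bC^N\times\{0\})=Rj_*(\Lambda)$ is a conic Lagrangian cycle of dimension $N$ by Theorem \ref{thm_G1}; over $s=\infty$, $Z\cap(T^*\bC^N\times\{\infty\})$ is a proper closed subset of the irreducible $Z$, of dimension $\leq N$. Hence the image $B$ of $Z\cap\partial$ under $\varpi\colon T^*\bC^N\times\bP^1\to T^*\bC^N$ has $\dim B\leq N$. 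Viewing $\Gamma_{dg}$ as the fibre over $dg$ of the second projection $T^*\bC^N=\bC^N\times(\bC^N)^*\to(\bC^N)^*$, the generic-fibre-dimension theorem shows that for general $g$ the set $\Gamma_{dg}\cap B$ is finite; and over each of its points the fibre of $\sC\cap\partial$ in the $\bP^1$-direction is finite, since $w|_{\overline{\Lambda^\#}}$ is a closed embedding (Proposition \ref{prop_G}(3)) and at most one further point can lie over $s=\infty$. Therefore $\sC\cap\partial$ is finite, $\sC\setminus\sC'$ is empty, and $\sC=\sC'$. The main obstacle is precisely this last step: controlling the "boundary at infinity'' $\sC\cap\partial$, which is where the genericity of $g$ and Ginsburg's structural results on $\overline{\Lambda^\#}$ enter.
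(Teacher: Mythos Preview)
Your argument is correct and follows the paper's strategy: show that $\Lambda^\#\cap(\Gamma_{dg}\times\bC)=\sC\cap\Omega$ is open and dense in $\sC$ for general $g$. The paper dispatches the density step in one line: since $\Lambda^\#$ is a nonempty Zariski open subset of the irreducible $(N{+}1)$-dimensional variety $Z$, its complement $Z\setminus\Lambda^\#=Z\cap\partial$ has dimension at most $N$, so for a general $g$ the intersection of this complement with the fibre $\Gamma_{dg}\times\bP^1$ of the projection $T^*\bC^N\times\bP^1\to(\bC^N)^*$ is finite; combined with your own observation that every component of $\sC$ has dimension $\geq 1$, no component lies in $\partial$. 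Your case-by-case analysis of $Z\cap\partial$ via Proposition~\ref{prop_G} and Theorem~\ref{thm_G1}, followed by the projection $\varpi$ and a fibre count, is therefore correct but unnecessary---the irreducibility of $Z$ alone already gives the needed dimension bound.
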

\begin{proof}
Notice that $\Lambda^\#$ is Zariski open and dense in its closure in ${T^*\bC^N}\times \bP^1$. Thus, for general a choice of $g$, the intersection ${\Lambda^\#}\cap \big(\Gamma_{dg}\times \bC\big)$ is also Zariski open and dense in $\sC$. Thus, the lemma follows. 
\end{proof}

Let $\pi: {T^*\bC^N}\times \bP^1\to \bC^N$ be the composition of the projection to the first factor and the natural cotangent bundle map $T^*\bC^N\to \bC^N$. Denote by ${\pi}_{\sC}$ the restriction of ${\pi}: {T^*\bC^N}\times \bP^1\to \bC^N$ to $\sC$. By definition, $\sC$ is closed in $\Gamma_{dg}\times \bP^1$. Clearly, the restriction map $\pi|_{\Gamma_{dg}\times \bP^1}: \Gamma_{dg}\times \bP^1\to \bC^N$ is a $\bP^1$ bundle map. Therefore, the map ${\pi}_{\sC}: \sC\to \bC^N$ is the composition of a closed embedding $\sC\to \Gamma_{dg}\times \bP^1$ and a proper projection $\Gamma_{dg}\times \bP^1\to \bC^N$. Since a closed embedding is proper and the composition of proper maps is also proper, we have the following lemma. 
\begin{lemma}\label{lemma_cproper}
The map ${\pi}_{\sC}: \sC\to \bC^N$ is proper. 
\end{lemma}

As a subvariety of $T^*\bC^N\times \bP^1$, we can consider $f$ and $s$ as functions on $\sC$ by abuse of notations: 
\begin{enumerate}
\item The regular function $f$ is the composition $\sC\hookrightarrow {T^*\bC^N}\times \bP^1\xrightarrow{{\pi}}\bC^N\xrightarrow{f}\bC$.
\item The rational function $s$ is the composition of $\sC\hookrightarrow {T^*\bC^N}\times \bP^1\to \bP^1$, where the second arrow is the projection to the second factor. Recall that $s$ is the coordinate of the line $\bC$, and hence it extends to a rational function on $\bP^1$. 
\end{enumerate}


\begin{proposition}\label{prop_pole}
In a neighborhood of $\{f=0\}$ in $\sC$, the rational function $s$ is a regular function. In other words, the zero locus of $f$ on $\sC$ does not intersect the pole locus of $s$ on $\sC$. 
\end{proposition}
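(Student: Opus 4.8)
The plan is to show that $s$ extends regularly over a neighborhood of $\{f=0\}$ on $\sC$ by invoking Proposition~\ref{prop_G}(2), which states that if $(\xi_x, s) \in \overline{\Lambda^\#}$ and $f(x)=0$, then $s=0$. First I would recall that $\sC \subset \overline{\Lambda^\#} \cap (\Gamma_{dg} \times \bP^1)$ (indeed $\sC$ is the closure of $\Lambda^\# \cap (\Gamma_{dg}\times\bC)$ by Lemma~\ref{lemma_irred}, and $\Lambda^\# \subset \overline{\Lambda^\#}$), so every point of $\sC$ is a point of the closure $\overline{\Lambda^\#}$ inside $T^*\bC^N \times \bP^1$. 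The only subtlety is that Proposition~\ref{prop_G} is phrased for $\overline{\Lambda^\#} \subset T^*\bC^N \times \bC$, i.e., where $s$ is already finite; I would therefore argue that over a neighborhood of $f^{-1}(0)$, no point of $\sC$ can have $s=\infty$.

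The key step is the following: suppose $(\xi_x, \infty) \in \sC$ lies over a point $x$ with $f(x)=0$. I would take a sequence (or a formal analytic arc) in $\Lambda^\# \cap (\Gamma_{dg}\times \bC)$ converging to $(\xi_x,\infty)$, say $(\xi_{x_k} + s_k (d\log f)(x_k), s_k)$ with $x_k \in X \cap U$, $\xi_{x_k} \in \Lambda = T^*_{X\cap U}U$, and $s_k \to \infty$. Because the first coordinate converges (to $\xi_x$, a finite covector over $x$ with $f(x)=0$), rescaling by $1/s_k$ shows that $(d\log f)(x_k) = \frac{1}{s_k}\bigl(\xi_{x_k}+s_k(d\log f)(x_k)\bigr) - \frac{1}{s_k}\xi_{x_k}$ must go to zero after... wait — more precisely, I would instead use that $\Gamma_{df}$ stays in a bounded region near $x$, so $s_k(d\log f)(x_k) = \frac{s_k}{f(x_k)} df|_{x_k}$ has the same limiting behaviour as the first coordinate minus the bounded term $\xi_{x_k}$; but this forces $s_k/f(x_k)$ to stay bounded, hence $f(x_k) \to \infty$ (since $s_k \to \infty$), contradicting $x_k \to x$ and continuity of $f$. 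Alternatively, and more cleanly, I would observe that Proposition~\ref{prop_G}(3) says $w|_{\overline{\Lambda^\#}}$ is a closed embedding over a neighborhood of $f^{-1}(0)$, so $\overline{\Lambda^\#}$ (taken inside $T^*\bC^N \times \bP^1$) cannot meet $T^*\bC^N \times \{\infty\}$ over such a neighborhood, because the same graph in $T^*\bC^N$ would be hit at finite $s$.

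Having established that over a neighborhood $V$ of $f^{-1}(0)$ in $\bC^N$, the part of $\sC$ lying over $V$ is contained in $\overline{\Lambda^\#} \subset T^*\bC^N \times \bC$, I conclude $s$ is a regular (finite) function there. Then I apply Proposition~\ref{prop_G}(2) directly: on the locus $\{f=0\} \cap \sC$, every point $(\xi_x, s)$ has $s = 0$, so the zero locus of $f$ on $\sC$ is contained in $\{s=0\}$, which is disjoint from the pole locus $\{s = \infty\}$ of $s$. Since $\{f=0\}$ and the pole locus are both closed, disjointness on the level of sets together with properness of $\pi_\sC$ (Lemma~\ref{lemma_cproper}) upgrades to disjointness of closed neighborhoods, giving the stated conclusion.

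The main obstacle I anticipate is the bookkeeping around "neighborhood of $f^{-1}(0)$": Ginsburg's Proposition~\ref{prop_G} is local near $f^{-1}(0) \subset M$, whereas $\sC$ is a global object in $T^*\bC^N \times \bP^1$, so I must be careful that the open set on which $\overline{\Lambda^\#}$ is well-behaved actually contains (the preimage of) a full neighborhood of $\{f=0\}$ and that $\sC$ restricted there stays inside $T^*\bC^N \times \bC$ rather than escaping to $s = \infty$. The cleanest route is to deduce everything from parts (2) and (3) of Proposition~\ref{prop_G} applied to $\overline{\Lambda^\#}$, and to handle the point at infinity on $\bP^1$ by the closed-embedding statement as sketched above; the arc-limit computation is a backup in case a more hands-on argument is preferred.
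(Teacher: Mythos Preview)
Your second route, via Proposition~\ref{prop_G}(3), is the right one and matches the paper's approach: both use that $w|_{\overline{\Lambda^\#}}$ is a closed embedding near $f^{-1}(0)$ to conclude that $\sC$ cannot meet $\{s=\infty\}$ there. The paper packages this as a properness argument---the composite $\overline{\Lambda^\#}\cap(\Gamma_{dg}\times\bC)\hookrightarrow\overline{\Lambda^\#}\xrightarrow{w}T^*\bC^N$ is proper (closed embedding followed by a closed embedding), it factors through the open inclusion $\overline{\Lambda^\#}\cap(\Gamma_{dg}\times\bC)\hookrightarrow\sC$, and then the standard fact that the first factor of a proper composite is proper (\cite[Corollary 4.8(e)]{Hartshorne}) forces that open inclusion to be an isomorphism near $\{f=0\}$---whereas you argue more directly that a closed embedding rules out accumulation at $s=\infty$. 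These are equivalent, and your version is fine once you actually spell it out: if $(\eta_k,s_k)\in\overline{\Lambda^\#}$ with $\eta_k\to\xi_x$ and $s_k\to\infty$, then $\xi_x\in w(\overline{\Lambda^\#})$ (closed image), so $w^{-1}(\xi_x)=(\xi_x,s^*)$ for a unique finite $s^*$, and the homeomorphism $w^{-1}$ forces $s_k\to s^*$, a contradiction.

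Two points of caution. First, the arc/sequence argument you begin with does not work as written: the conormal vector $\xi_{x_k}\in T^*_{X\cap U}U$ is \emph{not} a bounded term (the conormal space is conic, and its fibers are unbounded), so you cannot conclude that $s_k/f(x_k)$ stays bounded from the equation $\xi_{x_k}+s_k(d\log f)(x_k)=dg|_{x_k}$. You were right to discard it. Second, everything from the invocation of Proposition~\ref{prop_G}(2) onward is superfluous: the proposition only asks that $s$ be regular (i.e.\ finite) near $\{f=0\}$, which you have already established once you know $\sC$ lies in $T^*\bC^N\times\bC$ there. The further conclusion $s=0$ on $\{f=0\}\cap\sC$ is true but not needed, and the final appeal to properness of $\pi_\sC$ for ``disjointness of closed neighborhoods'' is unnecessary---the pole locus $\{s=\infty\}$ is already closed in $\sC$ and you have just shown it misses $\{f=0\}$.
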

\begin{proof}
Recall that $\overline{\Lambda^\#}$ is a closed subvariety of $T^*\bC^N\times \bC$, and by Lemma \ref{lemma_irred}, $\sC$ is the closure of $\overline{\Lambda^\#}\cap \big(\Gamma_{dg}\times \bC\big)$ in ${T^*\bC^N}\times \bP^1$. Thus, it suffices to show that the intersection $\overline{\Lambda^\#}\cap \big(\Gamma_{dg}\times \bC\big)$ is closed in a neighborhood of $\{f=0\}$ in ${T^*\bC^N}\times \bP^1$. This is a consequence of Proposition~\ref{prop_G}~(3), i.e., the restriction of $w: T^*\bC^N\times \bC\to T^*\bC^N$ to $\overline{\Lambda^\#}$ is a closed embedding in a neighborhood of $\{f=0\}$. 

In fact, since $\Gamma_{dg}$ is closed in $T^*\bC^N$, $\Gamma_{dg}\times \bC$ is closed in $T^*\bC^N\times \bC$, and hence $\overline{\Lambda^\#}\cap\big(\Gamma_{dg}\times \bC\big)$ is closed in $\overline{\Lambda^\#}$. Thus, 
\begin{equation*}\label{eq_proper}
\overline{\Lambda^\#}\cap\big(\Gamma_{dg}\times \bC\big) \hookrightarrow \overline{\Lambda^\#} \xrightarrow{w}  T^*\bC^N
\end{equation*}
is the composition of a closed embedding and a proper map (Proposition~\ref{prop_G}~(3)) in a neighborhood of $\{f=0\}$. Therefore, the above composition is a proper map. By the definition of $\sC$, the above composition factors through the natural inclusion map 
\begin{equation}\label{eq_inclusion}
\overline{\Lambda^\#}\cap\big(\Gamma_{dg}\times \bC\big)\hookrightarrow \sC.
\end{equation}
If a composition of maps of algebraic varieties is proper, then the first map must be proper (see e.g. \cite[Corollary 4.8 (e)]{Hartshorne}). Therefore, the open inclusion \eqref{eq_inclusion} is proper, and hence an isomorphism in a neighborhood of $\{f=0\}$. 
\end{proof}

By definition, the pole {locus} of $s$ as a rational function on $\sC$ is equal to the complement of $\overline{\Lambda^\#}\cap\big(\Gamma_{dg}\times \bC\big)$. Therefore, the above proposition is also equivalent to $\overline{\Lambda^\#}\cap\big(\Gamma_{dg}\times \bC\big)=\sC$ in a neighborhood of $\{f=0\}$. 
\begin{corollary}\label{cor_inj}
The map $\pi_\sC: \sC\to \bC^N$ is injective in a neighborhood of $\{f=0\}$. 
\end{corollary}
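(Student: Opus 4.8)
The plan is to deduce the statement from the closed-embedding part of Proposition~\ref{prop_G}(3), together with the fact that $\Gamma_{dg}$ is the image of a section of $p:T^*\bC^N\to\bC^N$. Indeed, since $g$ is linear, $dg$ is a constant $1$-form, so $\Gamma_{dg}=\{(x,dg|_x)\mid x\in\bC^N\}$ is a section and $p$ restricts to an isomorphism $p|_{\Gamma_{dg}}:\Gamma_{dg}\xrightarrow{\ \sim\ }\bC^N$. As $\sC\subset\Gamma_{dg}\times\bP^1$ and, over the locus $\{s\ne\infty\}=T^*\bC^N\times\bC$, the projection $\pi:T^*\bC^N\times\bP^1\to\bC^N$ equals $p\circ w$ (with $w:T^*\bC^N\times\bC\to T^*\bC^N$ the projection of Section~\ref{sec_Ginzburg}), the restriction of $\pi_\sC$ to $\{s\ne\infty\}\cap\sC$ factors as
\[
\{s\ne\infty\}\cap\sC\ \xrightarrow{\ w\ }\ \Gamma_{dg}\ \xrightarrow{\ p|_{\Gamma_{dg}}\ }\ \bC^N ,
\]
where the first arrow lands in $\Gamma_{dg}$ because $\sC\subset\Gamma_{dg}\times\bP^1$ and the second is an isomorphism. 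Hence it suffices to produce an open neighborhood $W$ of $\{f=0\}$ in $\sC$ on which $s$ is regular and $w|_W$ is injective.

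For the first requirement I would take $\sC^{\circ}:=\overline{\Lambda^\#}\cap(\Gamma_{dg}\times\bC)$, which is precisely the complement in $\sC$ of the pole locus of $s$ (see the discussion after Proposition~\ref{prop_pole}), hence open in $\sC$, and which contains the zero locus $\{f=0\}$ of $f$ on $\sC$ by Proposition~\ref{prop_pole}. For the second, I would fix an open neighborhood $V_0\subset\bC^N$ of $f^{-1}(0)$ over which $w$ restricts to a closed embedding on $\overline{\Lambda^\#}$ (Proposition~\ref{prop_G}(3)), and set $W:=\sC^{\circ}\cap\pi_\sC^{-1}(V_0)$. Then $W$ is open in $\sC$ and contains $\{f=0\}$ (because $\{f=0\}\subset\sC^{\circ}$ and $\pi_\sC$ sends $\{f=0\}$ into $f^{-1}(0)\subset V_0$), and $W\subset\overline{\Lambda^\#}\cap(T^*V_0\times\bC)$, on which $w$ is a closed embedding and hence injective. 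Composing with the factorization above shows $\pi_\sC|_W$ is injective, which is the assertion.

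I do not expect a genuine obstacle: all the analytic content is already packaged in Propositions~\ref{prop_pole} and~\ref{prop_G}(3), and the only new ingredient is the elementary observation that the section $\Gamma_{dg}$ lets one replace $\pi_\sC$, near $\{f=0\}$, by the restriction of $w$ followed by the isomorphism $p|_{\Gamma_{dg}}$. The only point needing mild care is the bookkeeping of the several ``neighborhoods of $\{f=0\}$'' involved: one in $\sC$ coming from the pole locus of $s$, one in $\bC^N$ coming from Proposition~\ref{prop_G}(3), and their common refinement $W$. If one wishes instead to phrase injectivity over a neighborhood of $f^{-1}(0)$ inside $\bC^N$, one may further shrink $V_0$ using the properness of $\pi_\sC$ (Lemma~\ref{lemma_cproper}), so that $\pi_\sC^{-1}(V_0)\subset\sC^{\circ}$.
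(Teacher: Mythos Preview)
Your argument is correct and follows essentially the same route as the paper: both proofs use Proposition~\ref{prop_pole} to work on $\overline{\Lambda^\#}\cap(\Gamma_{dg}\times\bC)$ near $\{f=0\}$, then factor $\pi_\sC$ there as $w$ followed by $p|_{\Gamma_{dg}}$, invoking Proposition~\ref{prop_G}(3) for injectivity of $w$ on $\overline{\Lambda^\#}$ and the fact that $\Gamma_{dg}\to\bC^N$ is an isomorphism. Your version is a bit more explicit about the various neighborhoods involved, but the content is the same.
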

\begin{proof}
By the arguments preceding the corollary, it suffices to show that the restriction of $\pi: {T^*\bC^N}\times \bP^1\to \bC^N$  to $\overline{\Lambda^\#}\cap\big(\Gamma_{dg}\times \bC\big)$ is injective. The restriction factors through the projection $w: T^*\bC^N\times \bC\to T^*\bC^N$. So the map $\overline{\Lambda^\#}\cap\big(\Gamma_{dg}\times \bC\big)\to \bC^N$ factors as
\begin{equation}\label{eq_composition}
\overline{\Lambda^\#}\cap\big(\Gamma_{dg}\times \bC\big)\to w(\overline{\Lambda^\#})\cap \Gamma_{dg}  \to \bC^N
\end{equation}
By Proposition~\ref{prop_G}~(3), the restriction of $w$ to $\overline{\Lambda^\#}$ is injective. Hence the first map in \eqref{eq_composition} is injective. The second map in \eqref{eq_composition} is injective, because the map $\Gamma_{dg}\to \bC^N$ is an isomorphism. Therefore, the restriction of $\pi$ to $\overline{\Lambda^\#}\cap\big(\Gamma_{dg}\times \bC\big)$, which is equal to the composition of \eqref{eq_composition}, is injective. 
\end{proof}

Let $\widetilde{\sC}\to \sC$ be the normalization of $\sC$. Then for any nonzero rational function $h$ on $\sC$, its pullback to $\widetilde{\sC}$ defines an effective Weil divisors $\Zero_{\tilde{\sC}}(h)$ on $\widetilde{\sC}$ as its zero divisor. We denote the pushforward of $\Zero_{\tilde{\sC}}(h)$ to $\sC$ by $\Zero_{\sC}(h)$.

\begin{proposition}\label{prop_plimit}
Under the above notations, as sets with multiplicity
\begin{equation}\label{eq_pi}
{\pi}_{\sC}\big(\Zero_{\sC}(f/s)\big)=\lim_{t\to 0}\Crit(f_t|_{X_{\reg}})
\end{equation}
in a neighborhood of $\{f=0\}$. 
\end{proposition}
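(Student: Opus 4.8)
The plan is to transport the whole problem onto the curve $\sC$ via the proper maps relating $\sC$ to $\bC^N$, and there to recognize $\lim_{t\to0}\Crit(f_t|_{X_{\reg}})$ as the zero divisor of the rational function $f/s$. Throughout, write $\nu\colon\widetilde{\sC}\to\sC$ for the normalization, so that by definition $\Zero_\sC(h)=\nu\big(\Zero_{\widetilde{\sC}}(h)\big)$ for a nonzero rational function $h$, the symbol $\nu(\,\cdot\,)$ denoting pushforward of sets with multiplicity.

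\emph{Step 1: matching $\Crit(f_t|_{X_{\reg}})$ with a fiber of $f/s$.} First I would build, for $t\in\bC^*$, a bijection between $\Crit(f_t|_{X_{\reg}})$ and the locus $\{f/s=t\}$ on $\sC$. By Lemma \ref{lemma_eq}, a point $x\in X_{\reg}$ with $f(x)\neq0$ lies in $\Crit(f_t|_{X_{\reg}})$ exactly when $df_t|_x=df|_x-t\,dg|_x\in T^*_{X_{\reg}}\bC^N|_x$; multiplying by $-1/t$ and putting $s\coloneqq f(x)/t$, this is the condition $dg|_x-\tfrac{s}{f(x)}\,df|_x\in T^*_{X_{\reg}}\bC^N|_x$, i.e. $(x,dg|_x)\in\Lambda^\#_s$ in the notation of \eqref{def_limit}. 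Hence $(x,dg|_x,s)\in\Lambda^\#\cap(\Gamma_{dg}\times\bC)\subseteq\sC$ (Lemma \ref{lemma_irred}), and at this point the functions $f$ and $s$ on $\sC$ take the values $f(x)$ and $f(x)/t$, so $f/s=t$ there. The construction is visibly reversible, so $\pi_\sC$ restricts to a bijection from $\{q\in\sC:(f/s)(q)=t,\ f(q)\neq0\}$ onto $\{x\in\Crit(f_t|_{X_{\reg}}):f(x)\neq0\}$. Now for all but finitely many $t$ the map $f_t|_{X_{\reg}}$ is a holomorphic Morse function, so $\Crit(f_t|_{X_{\reg}})$ is reduced; and since $\sC$ meets $\{f=0\}$ in only finitely many points, one may further require for such generic $t$ that no critical point of $f_t|_{X_{\reg}}$ lies on $\{f=0\}$ and, equivalently, no point of $\{f/s=t\}$ on $\sC$ lies over $\{f=0\}$. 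Passing to the normalization, this yields for generic $t$ the multiplicity-preserving equality
\[
\pi_\sC\big(\mathcal{M}_t^\sC\big)=\Crit(f_t|_{X_{\reg}}),\qquad\mathcal{M}_t^\sC\coloneqq\nu\big(\Zero_{\widetilde{\sC}}(f/s-t)\big).
\]

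\emph{Step 2: commuting the limit and computing it locally.} Next I would push $\lim_{t\to0}$ through the proper maps $\nu$ (finite) and $\pi_\sC$ (Lemma \ref{lemma_cproper}); all families in sight are algebraic in $t$, so the limits exist, and Lemma \ref{lemma_top} applied twice gives
\[
\lim_{t\to0}\Crit(f_t|_{X_{\reg}})=\lim_{t\to0}\pi_\sC\big(\mathcal{M}_t^\sC\big)=\pi_\sC\Big(\nu\big(\lim_{t\to0}\Zero_{\widetilde{\sC}}(f/s-t)\big)\Big).
\]
So it remains to identify $\lim_{t\to0}\Zero_{\widetilde{\sC}}(f/s-t)$ near the finite set $\nu^{-1}\pi_\sC^{-1}(\{f=0\})$. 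Fix $\tilde q$ lying over a point $q\in\{f=0\}\cap\sC$, with local parameter $\tau$. Since $f$ is regular on $\sC$ near $q$ and vanishes there, $f=\tau^{a}u$ with $u$ a unit and $a=\operatorname{ord}_{\tilde q}(f)\ge1$; by Proposition \ref{prop_pole} the function $s$ is regular near $\{f=0\}$, and by Proposition \ref{prop_G}(2) it vanishes along $\{f=0\}\cap\sC$, so $s=\tau^{b}v$ with $v$ a unit and $b=\operatorname{ord}_{\tilde q}(s)\ge1$; hence $\operatorname{ord}_{\tilde q}(f/s)=a-b$. Solving $\tau^{a-b}(u/v)=t$ near $\tau=0$ for small $t\neq0$ produces exactly $\max(a-b,0)$ solutions, each tending to $\tilde q$ as $t\to0$ — Weierstrass preparation if $a>b$, and no accumulation at $\tilde q$ if $a=b$ (the value of $f/s$ is a nonzero unit there) or $a<b$ ($\tilde q$ is then a pole of $f/s$). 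Thus $\tilde q$ appears in $\lim_{t\to0}\Zero_{\widetilde{\sC}}(f/s-t)$ with multiplicity $\max(\operatorname{ord}_{\tilde q}(f/s),0)$, which is exactly its multiplicity in $\Zero_{\widetilde{\sC}}(f/s)$; and near points of $\nu^{-1}\pi_\sC^{-1}(\{f=0\})$ not lying over $\{f=0\}$ one has $f\neq0$, hence (Propositions \ref{prop_pole} and \ref{prop_G}(2) again) $s$ is regular and nonzero, so $f/s$ is continuous and nonvanishing and contributes nothing. Therefore $\lim_{t\to0}\Zero_{\widetilde{\sC}}(f/s-t)=\Zero_{\widetilde{\sC}}(f/s)$ in a neighborhood of $\nu^{-1}\pi_\sC^{-1}(\{f=0\})$. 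Substituting this into the displayed identity, using $\nu\big(\Zero_{\widetilde{\sC}}(f/s)\big)=\Zero_\sC(f/s)$ and the injectivity of $\pi_\sC$ near $\{f=0\}$ (Corollary \ref{cor_inj}) to keep multiplicities under control, we obtain \eqref{eq_pi}.

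\emph{Where the difficulty lies.} The delicate part is entirely the boundary bookkeeping along $\{f=0\}\cap\sC$: verifying that for generic $t$ the correspondence of Step 1 carries no spurious points over $\{f=0\}$, and — in Step 2 — that as $t\to0$ no fiber point of $f/s$ escapes from, or migrates into, a small neighborhood of $\tilde q$. The latter is a local-compactness argument, legitimate because $\pi_\sC^{-1}(\{f=0\})$ is finite and because of the properness already exploited in Lemma \ref{lemma_top}; Propositions \ref{prop_pole} and \ref{prop_G}(2), which assert that on $\sC$ the rational function $s$ is regular near $\{f=0\}$ and vanishes exactly along $\{f=0\}\cap\sC$, are precisely what force the orders $a,b$ to be finite and positive and so make the local computation of the limiting divisor valid.
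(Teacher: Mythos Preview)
Your argument is correct and follows essentially the same route as the paper's proof: identify $\Crit(f_t|_{X_{\reg}})$ with the fiber $\{f/s=t\}$ on $\sC$ via $\pi_\sC$ (the paper packages this as Lemma~\ref{lemma_pw}, which you re-derive inline), commute the limit through the proper map $\pi_\sC$ via Lemma~\ref{lemma_top}, and recognize the resulting limit as $\Zero_\sC(f/s)$. The paper dispatches this last step with a single ``Clearly'', whereas you carry out the local computation on the normalization explicitly; your appeals to Proposition~\ref{prop_G}(2) (only the implication $f=0\Rightarrow s=0$ is asserted there, not the converse you mention in the final paragraph) and to Corollary~\ref{cor_inj} are not actually needed for the argument, but they do no harm.
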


Before proving the proposition, we make the following observation. By abuse of notations, we can consider $f$ and $s$ as regular functions on the affine space $T^*\bC^N\times \bC$. Fixing a nonzero complex number $t$, we have a hypersurface $\{f=ts\}$ in $T^*\bC^N\times \bC$. Recall that $w: T^*\bC^N\times \bC\to T^*\bC^N$ and $p: T^*\bC^N\to \bC^N$ are the natural projections, and $\pi:T^*\bC^N\times \bC\to \bC^N$ is their composition. 

\begin{lemma}\label{lemma_pw}
 Under the above notations, for any fixed $t\in \bC^*$, we have
\[
p(\Gamma_{X_{\reg}, f}\cap \Gamma_{tdg})=\pi\Big({\Lambda^\#}\cap \big(\Gamma_{dg}\times \bC^*\big)\cap \{f=ts\}\Big).
\]
\end{lemma}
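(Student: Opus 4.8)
The plan is to unwind both sides of the claimed equality and match them term by term, using the definitions of $\Gamma_{X_{\reg},f}$, $\Lambda^\#$, and the functions $f,s$ on $T^*\bC^N \times \bC$. First I would recall that by Lemma \ref{lemma_eq}, the left-hand side $p(\Gamma_{X_{\reg},f} \cap \Gamma_{tdg})$ equals $\Crit(f_t|_{X_{\reg}})$, but more usefully for this lemma I would work directly with the intersection $\Gamma_{X_{\reg},f} \cap \Gamma_{tdg}$ inside $T^*\bC^N$ and compare it with the image under $w$ of the right-hand intersection. A point of $\Gamma_{X_{\reg},f} \cap \Gamma_{tdg}$ is a pair $(x,\eta)$ with $x \in X_{\reg}$, $\eta = t\,dg|_x$, and $\eta + df|_x \in T^*_{X_{\reg}}\bC^N$; equivalently $(x,\, t\,dg|_x - df|_x) \in T^*_{X_{\reg}}\bC^N = \Lambda$ over $U$ (noting $x \in X_{\reg}$ forces $f(x) \neq 0$ is \emph{not} automatic, so I must be careful — but the equality is only asserted set-theoretically after applying $\pi$, and the points with $f(x)=0$ would correspond to $s\to\infty$; I will need to check these do not interfere, or simply restrict attention to where the stated equality lives).

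Next I would analyze the right-hand side. By definition \eqref{def_limit}, $\Lambda^\# = \{\xi + s\,(d\log f)(x) \mid (x,\xi) \in \Lambda,\ s \in \bC^*\} \subset T^*U \times \bC^*$. A point of $\Lambda^\# \cap (\Gamma_{dg} \times \bC^*) \cap \{f = ts\}$ is then a triple $(x,\zeta,s)$ with $\zeta = \xi + s\,(d\log f)(x)$ for some $(x,\xi)\in\Lambda$, with $\zeta = dg|_x$ (the defining condition of $\Gamma_{dg}$), and with $f(x) = ts$. The key manipulation is that $s\,(d\log f)(x) = \frac{s}{f(x)}\,df|_x = \frac{1}{t}\,df|_x$ using $f(x) = ts$; hence the condition becomes $dg|_x = \xi + \frac{1}{t}df|_x$ for some $\xi \in T^*_{X_{\reg}}\bC^N|_x$, i.e. $t\,dg|_x - df|_x \in T^*_{X_{\reg}}\bC^N|_x$, which is exactly the condition describing the left-hand side. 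Applying $\pi = p \circ w$ (which forgets both the cotangent coordinate $\zeta$ and the $\bC^*$-coordinate $s$) to this set then yields precisely $p(\Gamma_{X_{\reg},f} \cap \Gamma_{tdg})$, and conversely every such $x$ lifts uniquely (given $t \neq 0$ and $f(x) = ts$ determining $s$, and $\xi = dg|_x - \frac1t df|_x$, which automatically lies in the conormal by the defining condition and extends to a point of $\Lambda^\#$ provided $f(x)\neq 0$, forced by $s\in\bC^*$).

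The main obstacle I anticipate is the bookkeeping around the locus $\{f = 0\}$ and whether the correspondence is genuinely bijective or merely surjective onto the image: on the $\Lambda^\#$ side we have the extra constraints $s \in \bC^*$ and $f(x) = ts$, which together force $f(x) \neq 0$, so any $x \in \Crit(f_t|_{X_{\reg}})$ lying on $\{f=0\}$ is excluded from the right-hand set. However the statement is an equality of \emph{sets} (not sets with multiplicity here — that refinement comes in Proposition \ref{prop_plimit}), and since $g$ is general and $t$ is a fixed nonzero parameter, one expects $\Crit(f_t|_{X_{\reg}}) \cap \{f=0\}$ to be empty for general $t$; alternatively the lemma may simply be understood scheme-theoretically away from $\{f=0\}$, consistently with its later use in Proposition \ref{prop_plimit} where everything is taken "in a neighborhood of $\{f=0\}$" via the rational function $f/s$. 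I would therefore state the proof cleanly as the chain of equivalences above, observing that the substitution $s\,(d\log f)(x) = \tfrac1t df|_x$ on the hypersurface $\{f = ts\}$ is the single computational step, and noting that $\pi$ restricted to $\Lambda^\# \cap (\Gamma_{dg}\times\bC^*) \cap \{f=ts\}$ is injective (its composition factors through $w$, which is a closed embedding on $\overline{\Lambda^\#}$ by Proposition \ref{prop_G}(3), followed by $p|_{\Gamma_{dg}}$, an isomorphism onto $\bC^N$), so that the set equality is in fact realized by a bijection and the identification is compatible with the cycle-theoretic refinement needed later.
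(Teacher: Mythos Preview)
Your argument is correct and matches the paper's approach: the paper's proof is simply the chain of equivalences $(1)\Leftrightarrow(2)\Leftrightarrow(3)\Leftrightarrow(4)$ for a point $x\in X_{\reg}$, and the passage $(2)\Leftrightarrow(3)$ is exactly your substitution $s\,(d\log f)(x)=\tfrac{1}{t}\,df|_x$ on the hypersurface $\{f=ts\}$. Your extra caution about points on $\{f=0\}$ and about injectivity of $\pi$ goes beyond what the paper records (it just declares the equivalences ``straightforward to check''), but neither refinement is needed for the set-theoretic equality as it is used downstream.
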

\begin{proof}
It is straightforward to check one by one that the following conditions are equivalent for a point $x\in X_{\reg}$.
\begin{enumerate}
\item $x\in \pi\Big({\Lambda^\#}\cap \big(\Gamma_{dg}\times \bC^*\big)\cap \{f=ts\}\Big)$.
\item The restriction of the 1-form $s\cdot \frac{df}{f}-dg$ to $X$ vanishes at $x$ for $s=\frac{f(x)}{t}$.
\item The restriction of the 1-form $df-tdg$ to $X$ vanishes at $x$. 
\item  $x\in p\big(\Gamma_{X_{\reg}, f}\cap \Gamma_{tdg}\big)$. 
\end{enumerate}
Thus, the assertion in the lemma follows. 
\end{proof}

\begin{proof}[Proof of Proposition \ref{prop_plimit}] 
By Lemma \ref{lemma_eq} and Lemma \ref{lemma_pw}, we have 
\begin{equation*}
\begin{split}
\lim_{t\to 0}\Crit(f_t|_{X_{\reg}})&=\lim_{t\to 0}\;p(\Gamma_{X_{\reg}, f}\cap \Gamma_{tdg})\\
&=\lim_{t\to 0}\;\pi\Big({\Lambda^\#}\cap \big(\Gamma_{dg}\times \bC^*\big)\cap \{f=ts\}\Big)\\
&=\lim_{t\to 0}\;\pi\Big({\Lambda^\#}\cap \big(\Gamma_{dg}\times \bC^*\big)\cap \{f/s=t\}\Big)
\end{split}
\end{equation*}
in a neighborhood of $\{f=0\}$ in $\bC^N$. 

As before, consider $f/s$ as a rational function on $\sC$. By Lemma \ref{lemma_irred}, the intersection $\Lambda^\#\cap \big(\Gamma_{dg}\times \bC^*\big)$ is a Zariski open and dense subset of $\sC$. Therefore, for all but finitely many $t\in \bC$, we have
\[
{\Lambda^\#}\cap \big(\Gamma_{dg}\times \bC^*\big)\cap \{f/s=t\}=\sC\cap \{f/s=t\}.
\]
Combining the above two equations, we have
\[
\lim_{t\to 0}\Crit(f_t|_{X_{\reg}})=\lim_{t\to 0}\;\pi_{\sC}\big(\sC\cap \{f/s=t\}\big)
\]
in a neighborhood of $\{f=0\}$ in $\bC^N$. By Lemma \ref{lemma_cproper} and Lemma \ref{lemma_top}, we have
\[
\lim_{t\to 0}\;\pi_{\sC}\big(\sC\cap \{f/s=t\}\big)=\pi_{\sC}\Big(\lim_{t\to 0}\big(\sC\cap \{f/s=t\}\big)\Big).
\]
Clearly, 
\[
\lim_{t\to 0}\big(\sC\cap \{f/s=t\}\big)=\Zero_\sC(f/s).
\]
Combining the above three equations, we have
\[
\lim_{t\to 0}\Crit(f_t|_{X_{\reg}})=\pi_{\sC}\big(\Zero_\sC(f/s)\big)
\]
in a neighborhood of $\{f=0\}$ in $\bC^N$. 
\end{proof}

Let $X\subset \bC^N$ and $f: \bC^N\to \bC$ as before, and let $^p \Psi_{f}$ be the perverse nearby cycle functor from conic Lagrangian cycles on $\bC^N$ to the ones supported on $\{f=0\}$.
Let $U$ be the complement of $\{f=0\}$ in $\bC^N$, and let $j: U\to \bC^N$ be the inclusion map. 
We fix a stratification
\[
X\cap \{f=0\}=\sqcup_{i\in I_0}S_i
\]
of $X\cap \{f=0\}$ into locally closed smooth subvarieties such that
\begin{equation}\label{eq_as1}
^p\Psi_{f}\big([T_X^*\bC^N]\big)=\sum_{i\in I_0}m'_{i}[T^*_{\overline{S_{i}}}\bC^N],
\end{equation}
and
\begin{equation}\label{eq_as2}
Rj_{*}\big([T_X^*\bC^N]|_U\big)=[T_X^*\bC^N]+\sum_{i\in I_0}l'_{i}[T^*_{\overline{S_{i}}}\bC^N],
\end{equation}
with $m'_i, l'_i\in \bZ_{\geq 0}$ for all $i\in I_0$. 

\begin{proposition}\label{prop_crit}
Under the above notations, counting multiplicities yields:
\begin{equation}\label{eq_pimi}
\pi_{\sC} \big(\Zero_{\sC}(f)\big)=\sum_{i\in I_0}m'_i \cdot \Crit(g|_{S_i})
\end{equation}
and
\begin{equation}\label{eq_pili}
\pi_{\sC} \big(\Zero_{\sC}(s)\big)=\sum_{i\in I_0}l'_i \cdot \Crit(g|_{S_i})
\end{equation}
in a neighborhood of $\{f=0\}$ of $\bC^N$. 
\end{proposition}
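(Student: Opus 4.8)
The plan is to compute the two zero divisors $\Zero_{\sC}(f)$ and $\Zero_{\sC}(s)$ separately by interpreting them through Ginsburg's constructions, then push forward by $\pi_{\sC}$. First I would establish the general principle: for a rational function $h$ on $\sC$, the pushforward $\pi_{\sC}(\Zero_{\sC}(h))$ can be read off from the specialization of the cycle $\sC$ (or rather of the ambient Lagrangian family $\Lambda^\#$ intersected with $\Gamma_{dg}\times\bC$) to the relevant locus, using that $\pi_{\sC}$ is proper (Lemma~\ref{lemma_cproper}) and injective near $\{f=0\}$ (Corollary~\ref{cor_inj}), so that multiplicities are preserved under pushforward and the only subtlety is the scheme structure of the fiber over $0$. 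Concretely, since $\pi_{\sC}$ is proper, $\lim_{t\to 0}\pi_{\sC}(\sC\cap\{h=t\})=\pi_{\sC}(\Zero_{\sC}(h))$ by Lemma~\ref{lemma_top}, exactly as in the proof of Proposition~\ref{prop_plimit}; so I reduce both identities to identifying a limit cycle.

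For \eqref{eq_pimi}: the divisor $\Zero_{\sC}(f)$ is the part of $\sC$ lying over $\{f=0\}$, i.e. the schematic intersection $\sC\cap(T^*\bC^N\times\bP^1)|_{\{f=0\}}$. By Lemma~\ref{lemma_irred} and Proposition~\ref{prop_pole}, near $\{f=0\}$ this coincides with $\overline{\Lambda^\#}\cap(\Gamma_{dg}\times\bC)$ restricted to $\{f=0\}$, and by Proposition~\ref{prop_G}(3) (the restriction of $w$ to $\overline{\Lambda^\#}$ is a closed embedding near $\{f=0\}$) and Proposition~\ref{prop_G}(2) (over $\{f=0\}$ one has $s=0$), this is the intersection $w(\overline{\Lambda^\#})|_{\{f=0\}}\cap\Gamma_{dg}$. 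But $w(\overline{\Lambda^\#})|_{\{f=0\}}$ is precisely $\Lim_{f\to 0}\Lambda^\natural$, which by Theorem~\ref{thm_G2} equals $^p\Psi_f([T_X^*\bC^N])=\sum_{i\in I_0}m'_i[T^*_{\overline{S_i}}\bC^N]$ by \eqref{eq_as1}. Intersecting this conic Lagrangian cycle with $\Gamma_{dg}$ and projecting by $p$ gives, term by term, $p(T^*_{\overline{S_i}}\bC^N\cap\Gamma_{dg})$; for a general linear $g$ this is exactly $\Crit(g|_{S_i})$ (a point $x\in S_i$ is a critical point of $g|_{S_i}$ iff $dg|_x$ lies in the conormal to $\overline{S_i}$ at $x$, and generality of $g$ guarantees these intersections are transverse and avoid the boundary strata, so multiplicities are all $1$). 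Putting the scheme-theoretic multiplicities $m'_i$ together with $\pi_{\sC}$ being injective near $\{f=0\}$, we obtain \eqref{eq_pimi}.

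For \eqref{eq_pili}: the divisor $\Zero_{\sC}(s)$ is the fiber of $\sC$ over $s=0$ inside $T^*\bC^N\times\bP^1$. Here I would use that $\{s=0\}$ in $T^*\bC^N\times\bC$ meets $\overline{\Lambda^\#}$ along (the closure of) the zero-section contribution, and that by Ginsburg's Theorem~\ref{thm_G1}, $\Lim_{s\to 0}\Lambda_s^\# = Rj_*(\Lambda)=Rj_*([T_X^*\bC^N]|_U)$, which by \eqref{eq_as2} equals $[T_X^*\bC^N]+\sum_{i\in I_0}l'_i[T^*_{\overline{S_i}}\bC^N]$. Intersecting the $s=0$ specialization with $\Gamma_{dg}$ and projecting: the term $[T_X^*\bC^N]\cap\Gamma_{dg}$ projects to $\Crit(g|_{X_{\reg}})$ which is disjoint from $\{f=0\}$ (critical points of a general linear function on $X_{\reg}$ avoid the general hypersurface $\{f=0\}$; more precisely, the relevant statement is only needed \emph{in a neighborhood of $\{f=0\}$}, where this term contributes nothing), so near $\{f=0\}$ only the strata terms survive, giving $\sum_{i\in I_0}l'_i\cdot p(T^*_{\overline{S_i}}\bC^N\cap\Gamma_{dg})=\sum_{i\in I_0}l'_i\cdot\Crit(g|_{S_i})$ just as before.

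The main obstacle I anticipate is bookkeeping the scheme-theoretic multiplicities correctly: specifically, showing that the multiplicity with which a component $T^*_{\overline{S_i}}\bC^N$ appears in $\Lim_{f\to 0}\Lambda^\natural$ (resp. in $\Lim_{s\to 0}\Lambda_s^\#$) matches the divisorial multiplicity of that component in $\Zero_{\sC}(f)$ (resp. $\Zero_{\sC}(s)$) after intersecting with $\Gamma_{dg}$ and normalizing $\sC$. This requires care because $\sC$ is a curve cut out by a generic linear slice of the (surface) $\overline{\Lambda^\#}$, and one must check that slicing by $\Gamma_{dg}$ is transverse enough not to change multiplicities — this is where genericity of $g$ is essential — and that the normalization $\widetilde{\sC}\to\sC$ together with $\pi_{\sC}$ being injective near $\{f=0\}$ lets us transfer the divisor computation on $\sC$ to a clean count on $\bC^N$. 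The verification that $p(T^*_{\overline{S_i}}\bC^N\cap\Gamma_{dg})=\Crit(g|_{S_i})$ with all multiplicities $1$, and that $g$ can be chosen so that simultaneously all these slices behave well for every $i\in I_0$, is the key technical point that makes the two formulas come out with exactly the coefficients $m'_i$ and $l'_i$.
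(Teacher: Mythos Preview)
Your approach is essentially the same as the paper's: both identities are obtained by expressing $\Zero_{\sC}(f)$ and $\Zero_{\sC}(s)$ as limits (respectively via Theorem~\ref{thm_G2} and Theorem~\ref{thm_G1}), commuting the proper projection $\pi_{\sC}$ with these limits using Lemma~\ref{lemma_top}, and then reading off the result from \eqref{eq_as1} and \eqref{eq_as2} after intersecting with $\Gamma_{dg}$, using transversality for general $g$. One small imprecision: in your treatment of the $[T^*_X\bC^N]$ term in \eqref{eq_pili}, the hypersurface $\{f=0\}$ is \emph{not} general, so the phrase ``critical points of a general linear function on $X_{\reg}$ avoid the general hypersurface $\{f=0\}$'' is not the right justification; the paper's argument is the dimension count you allude to---since $f|_X$ is nonconstant, $T^*_X\bC^N\cap\{f=0\}$ has dimension at most $N-1$, hence a general $\Gamma_{dg}$ misses it near $\{f=0\}$.
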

\begin{proof}
We will derive the statements in the proposition from Theorem \ref{thm_G1} and Theorem \ref{thm_G2} using similar arguments as in the proof of Proposition \ref{prop_plimit}. 

Considering $f$ as a regular function on the curve $\sC$, we have
\[
\Zero_{\sC}(f)=\lim_{c \to0}\{x\in \sC\mid f(x)=c\},
\]
where the limit is taken in $\sC$. Equivalently, considering $f$ as a regular function on $\overline{T^*\bC^N}\times \bP^1$ and $\{f=c\}$ as a hypersurface of $\overline{T^*\bC^N}\times \bP^1$, we have
\[
\Zero_{\sC}(f)=\lim_{c\to 0}\sC\cap \{f=c\},
\]
where the limit is taken in $\sC$. 
By Lemma \ref{lemma_irred}, $\overline{\Lambda^\#}\cap \big(\Gamma_{dg}\times \bC\big)$ is a nonempty Zariski open subset of $\sC$. Thus, 
\[
\lim_{c\to 0}\sC\cap \{f=c\}=\lim_{c\to 0}\overline{\Lambda^\#}\cap \big(\Gamma_{dg}\times \bC\big)\cap \{f=c\},
\]
where both limits are taken in $\sC$. 
Combining the above two equations, we have
\begin{equation}\label{eq_2pi}
\pi_{\sC} \big(\Zero_{\sC}(f)\big)=\pi\big(\lim_{c\to 0}\overline{\Lambda^\#}\cap \big(\Gamma_{dg}\times \bC\big)\cap \{f=c\}\big).
\end{equation}
Since the restriction of ${\pi}: {T^*\bC^N}\times \bP^1\to \bC^N$ to $\Gamma_{dg}\times \bP^1$ is proper,  Lemma \ref{lemma_top} implies that
\begin{equation}\label{eq_limpi}
\pi\big(\lim_{c\to 0}\overline{\Lambda^\#}\cap \big(\Gamma_{dg}\times \bC\big)\cap \{f=c\}\big)=\lim_{c\to 0}\pi\big(\overline{\Lambda^\#}\cap \big(\Gamma_{dg}\times \bC\big)\cap \{f=c\}\big)
\end{equation}
where the first limit is taken in ${T^*\bC^N}\times \bP^1$ and the second limit is taken in $\Gamma_{dg}\times \bP^1$. 

Recall that in Section \ref{sec_Ginzburg}, $w: T^*\bC^N\times \bC\to T^*\bC^N$ is the natural projection, and $\Lambda^\natural$ is equal to the pushforward $w_*(\overline{\Lambda^\#})$. Therefore, 
\begin{equation}\label{eq_pip}
\pi\big(\overline{\Lambda^\#}\cap \big(\Gamma_{dg}\times \bC\big)\cap \{f=c\}\big)=p\big(\Lambda^\natural\cap \Gamma_{dg}\cap \{f=c\}\big)
\end{equation}
where $p:T^*\bC^N\to \bC^N$ is the cotangent bundle map. Since the restriction of $p: T^*\bC^N\to \bC^N$ to $\Gamma_{dg}$ is an isomorphism, in particular proper, by Lemma \ref{lemma_top}, we have
\begin{equation}\label{eq_pcomm}
p\big(\lim_{c\to 0}\Lambda^\natural\cap \Gamma_{dg}\cap \{f=c\}\big)=\lim_{c\to 0}p\big(\Lambda^\natural\cap \Gamma_{dg}\cap \{f=c\}\big)
\end{equation}
where the first limit is in $T^*\bC^N$ and the second limit is in $\bC^N$. 

Recall that $\lim_{f\to 0}\Lambda^\natural$ is the schematic restriction of the variety $w(\overline{\Lambda^\#})$ to $T^*M|_{f^{-1}(0)}$. Since $\Lambda=T^*_X\bC^N$, by Theorem \ref{thm_G2}, we have 
\[
\underset{f\to 0}{\Lim}\,\Lambda^\natural= \,^p\Psi_f([T^*_X\bC^N])
\]
which by assumption \eqref{eq_as1} is equal to $\sum_{i\in I_0}m'_i \cdot \Crit(g|_{S_i})$. Since $g:\bC^N\to \bC$ is a general linear function, $\Gamma_{dg}$ intersects $T^*_{\overline{S_i}}\bC^N$ transversally and it also intersects $\Lambda^\natural\cap \{f=c\}$ transversally for all but finitely many $c\in \bC$. Therefore,
\begin{equation}\label{eq_limmi}
\lim_{c\to 0}\Lambda^\natural\cap \Gamma_{dg}\cap \{f=c\}=\sum_{i\in I_0}m'_i \cdot T^*_{\overline{S_i}}\bC^N\cap \Gamma_{dg}
\end{equation}
as sets with multiplicity, where the limit is taken in $T^*\bC^N$. 

Finally, equality \eqref{eq_pimi} follows from equations \eqref{eq_2pi}, \eqref{eq_limpi}, \eqref{eq_pip}, \eqref{eq_pcomm} and \eqref{eq_limmi}. The proof of equality \eqref{eq_pili} is similar.
The only difference is that, in this case, the term $[T^*_X\bC^N]$ in \eqref{eq_as2} does not contribute to the right side of \eqref{eq_pili}. In fact, since $f$ is nonconstant on $X$, the intersection $T^*_X\bC^N\cap \{f=0\}$ is of dimension at most $N-1$, and hence for a general $g$, the intersection $T^*_X\bC^N\cap \Gamma_g$ is empty in a sufficiently small neighborhood of $\{f=0\}$. 
\end{proof}

\begin{corollary}\label{cor_zero}
In a neighborhood of $\{f=0\}$ of $\sC$, as sets with multiplicity (or Weil divisors), we have
\[
\Zero_{\sC}(f/s)=\Zero_{\sC}(f)-\Zero_{\sC}(s). 
\]
\end{corollary}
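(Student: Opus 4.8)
The plan is to reduce the corollary to a purely divisor-theoretic statement on the normalization $\widetilde{\sC}$ of the curve $\sC$, localized near the fiber over $\{f=0\}$. Recall that for a nonzero rational function $h$ on $\sC$, the divisor $\Zero_{\sC}(h)$ was defined as the pushforward along $\widetilde{\sC}\to \sC$ of the \emph{zero divisor} $\Zero_{\widetilde{\sC}}(h)$ of the pullback of $h$. Since pushforward of divisors is $\bZ$-linear, it suffices to prove the identity of zero divisors on $\widetilde{\sC}$, namely $\Zero_{\widetilde{\sC}}(f/s)=\Zero_{\widetilde{\sC}}(f)-\Zero_{\widetilde{\sC}}(s)$ in a neighborhood of the preimage of $\{f=0\}$.

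The key point is Proposition \ref{prop_pole}: in a neighborhood of $\{f=0\}$ in $\sC$ the rational function $s$ is regular, and moreover its zero locus does not meet the pole locus of $s$ — but more to the point, what we need is that near $\{f=0\}$ the function $s$ has no poles at all, so that on $\widetilde{\sC}$ the pullback of $s$ is regular with divisor $\Zero_{\widetilde{\sC}}(s)$ and no polar part. First I would note that on the normalization $\widetilde{\sC}$, for the rational function $f/s$ one always has the identity of principal divisors $\mathrm{div}(f/s)=\mathrm{div}(f)-\mathrm{div}(s)$, and decomposing into positive and negative parts gives $\Zero_{\widetilde{\sC}}(f/s)-\Pole_{\widetilde{\sC}}(f/s)=\Zero_{\widetilde{\sC}}(f)-\Pole_{\widetilde{\sC}}(f)-\Zero_{\widetilde{\sC}}(s)+\Pole_{\widetilde{\sC}}(s)$. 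The claim then follows once I check that, in the relevant neighborhood, each of $\Pole_{\widetilde{\sC}}(f)$, $\Pole_{\widetilde{\sC}}(s)$, and $\Pole_{\widetilde{\sC}}(f/s)$ vanishes, since then all polar terms drop out and the three zero divisors satisfy the asserted linear relation.

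To see the vanishing of the polar parts: $f$ is a regular (polynomial) function on $\bC^N$, hence its pullback to $\sC$ via $\pi_{\sC}$, and then to $\widetilde{\sC}$, is regular everywhere, so $\Pole_{\widetilde{\sC}}(f)=0$. By Proposition \ref{prop_pole}, $s$ is regular on $\sC$ in a neighborhood $W$ of $\{f=0\}$; pulling back to $\widetilde{\sC}$ preserves regularity, so $\Pole_{\widetilde{\sC}}(s)=0$ over $W$. For the quotient, over $W$ both $f$ and $s$ are regular, so $f/s$ is regular at every point of $\widetilde{\sC}$ where $s$ does not vanish; at a point $P$ where $s$ vanishes, $f/s$ could in principle have a pole, so one must argue it does not. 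Here I would invoke Proposition \ref{prop_pole} again in its sharper form: the zero locus of $f$ on $\sC$ does not meet the pole locus of $s$; equivalently — and this is the crucial compatibility — near $\{f=0\}$ the curve $\sC$ coincides with $\overline{\Lambda^\#}\cap(\Gamma_{dg}\times\bC)$, on which $s$ is a coordinate and vanishes precisely on the locus $\Lambda \cap (\Gamma_{dg}\times\{0\})$, which lies in $\pi^{-1}(\{f=0\})$; so wherever $s=0$ on $\widetilde{\sC}$ (near our neighborhood) we also have $f=0$, with the order of vanishing of $f$ at least that of $s$, because the polar curve meets $f^{-1}(0)$ transversally in the smooth-locus directions. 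Thus $\mathrm{ord}_P(f)\geq \mathrm{ord}_P(s)$ at every such $P$, giving $\Pole_{\widetilde{\sC}}(f/s)=0$ over $W$.

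The main obstacle I expect is precisely this last inequality of vanishing orders, i.e., verifying that at a point of $\widetilde{\sC}$ over $\{f=0\}$ where $s$ vanishes, $f$ vanishes to at least the same order — purely from the divisor relation $\Zero_{\widetilde{\sC}}(f/s)-\Pole_{\widetilde{\sC}}(f/s)=\Zero_{\widetilde{\sC}}(f)-\Zero_{\widetilde{\sC}}(s)$ one sees that $\Zero(f/s)\geq 0$ automatically and $\Pole(f/s)$ is supported where $\mathrm{ord}(s)>\mathrm{ord}(f)$; ruling this out is exactly the content one extracts from Proposition \ref{prop_pole} and the identification of $\sC$ with $\overline{\Lambda^\#}\cap(\Gamma_{dg}\times\bC)$ near $\{f=0\}$. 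Once that is in hand, the corollary is immediate. An alternative, cleaner route avoiding order estimates: since near $\{f=0\}$ we have $\sC=\overline{\Lambda^\#}\cap(\Gamma_{dg}\times\bC)$ with $s$ a regular coordinate, the relation $f/s\cdot s = f$ of regular functions on this affine piece, together with the fact that taking zero divisors is multiplicative for products of regular functions on the normalization (i.e.\ $\Zero(gh)=\Zero(g)+\Zero(h)$ when $g,h$ are regular), gives $\Zero_{\widetilde{\sC}}(f)=\Zero_{\widetilde{\sC}}(f/s)+\Zero_{\widetilde{\sC}}(s)$ once we know $f/s$ is regular near $\{f=0\}$ — and $f/s$ is regular there because it equals $s\,\tfrac{df}{f}\!\cdot$ ... more simply because the generic fiber description in the proof of Proposition \ref{prop_plimit} exhibits $f/s$ as a well-defined morphism $\sC\to\bC$ near $\{f=0\}$. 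Pushing forward to $\sC$ then yields the stated equality of sets with multiplicity.
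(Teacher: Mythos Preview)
Your reduction to the normalization $\widetilde{\sC}$ and the divisor identity $\mathrm{div}(f/s)=\mathrm{div}(f)-\mathrm{div}(s)$ is fine, and you correctly isolate the crux: one must show $\Pole_{\widetilde{\sC}}(f/s)=0$ near $\{f=0\}$, i.e., $\mathrm{ord}_P(f)\geq \mathrm{ord}_P(s)$ at every point $P$ lying over $\{f=0\}$. However, neither of your two justifications for this inequality is valid. The claim that ``the polar curve meets $f^{-1}(0)$ transversally in the smooth-locus directions'' is neither stated nor provable from what has been established; in fact it fails in the examples of interest, where the whole point is that $f$ has non-isolated (or degenerate) critical behavior along $\{f=0\}$. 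Your alternative route is circular: you assume $f/s$ is regular in order to apply $\Zero(gh)=\Zero(g)+\Zero(h)$, and the attempted justification (``$f/s$ equals $s\,\tfrac{df}{f}\cdot\ldots$'', or that Proposition~\ref{prop_plimit} exhibits $f/s$ as a morphism) is incorrect---the first expression is a $1$-form, not a function, and Proposition~\ref{prop_plimit} only takes limits of level sets $\{f/s=t\}$, which makes sense for a rational function and does not imply regularity.

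The paper's proof supplies exactly the missing inequality $\Zero_{\sC}(f)\geq \Zero_{\sC}(s)$, but through a genuinely non-elementary route that you did not anticipate. By Proposition~\ref{prop_crit}, the pushforwards $\pi_{\sC}(\Zero_{\sC}(f))$ and $\pi_{\sC}(\Zero_{\sC}(s))$ are $\sum_i m'_i\Crit(g|_{S_i})$ and $\sum_i l'_i\Crit(g|_{S_i})$ respectively. By Proposition~\ref{prop_cycles} one has $m'_i-l'_i=n'_i$, and the key input is Massey's positivity theorem (Theorem~\ref{thm_positive}), which gives $n'_i\geq 0$. Hence $\pi_{\sC}(\Zero_{\sC}(f))\geq \pi_{\sC}(\Zero_{\sC}(s))$, and since $\pi_{\sC}$ is injective near $\{f=0\}$ (Corollary~\ref{cor_inj}), the inequality lifts to $\Zero_{\sC}(f)\geq \Zero_{\sC}(s)$. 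In short, the order comparison you flagged as ``the main obstacle'' is not a formality---it encodes the effectivity of $^p\Phi_f$ on conic Lagrangian cycles, and requires that theorem.
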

\begin{proof}
It suffices to show that in a neighborhood of $\{f=0\}$, the underlying set $\Zero_{\sC}(f)$ does not contain any pole of $s$ and $\Zero_{\sC}(f)\geq \Zero_{\sC}(s)$. 

The first part follows from Proposition~\ref{prop_pole}. Now, we prove the second part. 
By Proposition \ref{prop_cycles} and Theorem \ref{thm_positive}, we have 
\[
m_i'-l_i'=n_i'\geq 0
\]
for very $i\in I_0$. Thus, by Proposition \ref{prop_crit}, 
\[
\pi_{\sC} \big(\Zero_{\sC}(f)\big)\geq \pi_{\sC} \big(\Zero_{\sC}(s)\big)
\]
as sets of multiplicity. Since $\pi_\sC$ is injective in a neighborhood of $\{f=0\}$ (Corollary~\ref{cor_inj}), we have $\Zero_{\sC}(f)\geq \Zero_{\sC}(s)$. 
\end{proof}

Before proving Theorem \ref{thm_main}, we prove a local version of the theorem. 
\begin{theorem}\label{thm_local}
Let $X\cap \{f=0\}=\bigsqcup_{i\in I_0}S_i$ be a stratification of $X\cap \{f=0\}$ as discussed in the paragraphs before Example \ref{ex_milnor}.  In particular, equations \eqref{eq_as1} and \eqref{eq_as2} hold. Then 
\begin{equation}\label{eq_local}
\lim_{t\to 0}\Crit(f_t|_{X_{\reg}})=\sum_{i\in I_0}n'_{i}\cdot\Crit(g|_{S_{i}})
\end{equation}
in an analytic neighborhood of $\{f=0\}$ in $\bC^N$. Moreover, the coefficients $n'_i$ are determined by the following formula,
\begin{equation}\label{eq_ni}
^p\Phi_{f}([T_X^*\bC^N])=\sum_{i\in I_0}n'_{i}[T^*_{\overline{S_{i}}}\bC^N].
\end{equation}
\end{theorem}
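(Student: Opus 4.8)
The plan is to combine Proposition~\ref{prop_plimit} (which reduces the limit to $\pi_\sC(\Zero_\sC(f/s))$), Corollary~\ref{cor_zero} (which splits $\Zero_\sC(f/s)=\Zero_\sC(f)-\Zero_\sC(s)$ as Weil divisors near $\{f=0\}$), and Proposition~\ref{prop_crit} (which identifies the pushforwards of $\Zero_\sC(f)$ and $\Zero_\sC(s)$ with the weighted critical sets coming from the nearby-cycle and $Rj_*$ Lagrangian cycles). The only genuinely new ingredient needed is the identification of the coefficients $n_i'$ via the vanishing cycle functor, which follows from Proposition~\ref{prop_cycles}.

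\begin{proof}[Proof of Theorem \ref{thm_local}]
By Proposition~\ref{prop_plimit}, in a neighborhood of $\{f=0\}$ in $\bC^N$ we have
\[
\lim_{t\to 0}\Crit(f_t|_{X_{\reg}})=\pi_{\sC}\big(\Zero_{\sC}(f/s)\big).
\]
By Corollary~\ref{cor_zero}, in a neighborhood of $\{f=0\}$ in $\sC$ we have the equality of Weil divisors $\Zero_{\sC}(f/s)=\Zero_{\sC}(f)-\Zero_{\sC}(s)$. Applying $\pi_\sC$ and using its injectivity near $\{f=0\}$ (Corollary~\ref{cor_inj}), we obtain, as sets with multiplicity,
\[
\lim_{t\to 0}\Crit(f_t|_{X_{\reg}})=\pi_{\sC}\big(\Zero_{\sC}(f)\big)-\pi_{\sC}\big(\Zero_{\sC}(s)\big).
\]
Now Proposition~\ref{prop_crit} gives $\pi_{\sC}\big(\Zero_{\sC}(f)\big)=\sum_{i\in I_0}m'_i\cdot\Crit(g|_{S_i})$ and $\pi_{\sC}\big(\Zero_{\sC}(s)\big)=\sum_{i\in I_0}l'_i\cdot\Crit(g|_{S_i})$, both in a neighborhood of $\{f=0\}$. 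Subtracting and setting $n'_i:=m'_i-l'_i$ yields
\[
\lim_{t\to 0}\Crit(f_t|_{X_{\reg}})=\sum_{i\in I_0}n'_{i}\cdot\Crit(g|_{S_{i}})
\]
in an analytic neighborhood of $\{f=0\}$, which is \eqref{eq_local}. Here the difference of sets with multiplicity makes sense because $\Zero_\sC(f)\geq\Zero_\sC(s)$ near $\{f=0\}$ (Corollary~\ref{cor_zero}), so all $n'_i\geq 0$.

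It remains to identify the coefficients $n'_i$. Applying the characteristic cycle functor to the perverse attaching triangle, Proposition~\ref{prop_cycles} with $\Lambda=[T_X^*\bC^N]$ gives
\[
{}^p\Psi_{f}\big([T_X^*\bC^N]\big)=j_*j^*\big([T_X^*\bC^N]\big)-[T_X^*\bC^N]+{}^p\Phi_{f}\big([T_X^*\bC^N]\big)
\]
in $LCZ(T^*\bC^N)$, with all terms supported on $\{f=0\}$ except for the copy of $[T_X^*\bC^N]$, which cancels by \eqref{eq_as2}. By \eqref{eq_as1} the left-hand side is $\sum_{i\in I_0}m'_i[T^*_{\overline{S_i}}\bC^N]$, and by \eqref{eq_as2} we have $j_*j^*([T_X^*\bC^N])=Rj_*([T_X^*\bC^N]|_U)=[T_X^*\bC^N]+\sum_{i\in I_0}l'_i[T^*_{\overline{S_i}}\bC^N]$. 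Substituting, the two copies of $[T_X^*\bC^N]$ cancel and, since the cycles $[T^*_{\overline{S_i}}\bC^N]$ for distinct $i\in I_0$ are linearly independent in $LCZ(T^*\bC^N)$, we conclude
\[
{}^p\Phi_{f}\big([T_X^*\bC^N]\big)=\sum_{i\in I_0}(m'_i-l'_i)\,[T^*_{\overline{S_i}}\bC^N]=\sum_{i\in I_0}n'_{i}\,[T^*_{\overline{S_i}}\bC^N],
\]
which is \eqref{eq_ni}. This completes the proof.
\end{proof}

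**The main obstacle** is bookkeeping rather than conceptual: one must be careful that every equality of sets with multiplicity, of divisors on $\sC$, and of Lagrangian cycles holds only in an analytic neighborhood of $\{f=0\}$, and that the passage between these three categories (via $\pi_\sC$, via $\Zero_\sC$, and via $CC$) is legitimate on that neighborhood — precisely the content that Corollaries~\ref{cor_inj} and~\ref{cor_zero} and Propositions~\ref{prop_plimit} and~\ref{prop_crit} were set up to supply. Given those, the proof is a short assembly; the only place where a new observation enters is the cancellation of the $[T_X^*\bC^N]$ term, which is where the hypothesis that $f|_X$ is nonconstant (so $T_X^*\bC^N$ is not supported over $\{f=0\}$) is used, exactly as in the proof of Proposition~\ref{prop_crit}.
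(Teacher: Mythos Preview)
Your proof is correct and follows essentially the same approach as the paper's own proof, which simply cites Proposition~\ref{prop_cycles} to obtain $m_i'-l_i'=n_i'$ and then invokes Corollary~\ref{cor_zero}, Proposition~\ref{prop_plimit}, and Proposition~\ref{prop_crit}. You have merely unpacked those citations into explicit steps; one minor remark is that the injectivity of $\pi_\sC$ is not strictly needed to pass the subtraction through the pushforward (pushforward of sets with multiplicity is linear), but invoking it does no harm.
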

\begin{proof}
By Proposition \ref{prop_cycles}, we have 
\[
m_i'-l_i'=n_i'
\]
for very $i\in I_0$. Now the assertion in the theorem follows from Corollary \ref{cor_zero}, Proposition \ref{prop_plimit} and Proposition \ref{prop_crit}.\end{proof}

\begin{proof}[Proof of Theorem \ref{thm_main}]
To prove Theorem \ref{thm_main}, it suffices to show the assertions hold in a neighborhood of $\{f=c\}$ for every $c\in \bC$. This follows from Theorem \ref{thm_local} with $f$ replaced by $f-c$. 
\end{proof}

\begin{remark}\label{remark_Massey}
As we shall now explain, it is also possible to derive our results from \cite{Massey1} instead of using \cite{G}. A topological interpretation of the vanishing cycle of a conic Lagrangian cycle is obtained in \cite[Theorem 2.10]{Massey1}. Let $\Lambda$ be an irreducible conic Lagrangian subvariety of $T^*\bC^N$ and let $f: \bC^N\to \bC$ be a polynomial function. Blow up $T^*\bC^N$ along $\Gamma_{df}$, the image of the 1-form $df$. Let $\tilde{\Lambda}$ be the strict transformation of $\Lambda$, and let $E$ be the exceptional divisor. The natural isomorphism between $\Gamma_{df}$ and $\bC^N$ induces an isomorphism between $E$ and the projective bundle $\Proj(T^*\bC^N)$. Under this isomorphism,
\[
E\cap \tilde{\Lambda}=\sum_{c\in \bC}\Proj\big(\, ^p\Phi_{f-c}(\Lambda)\big)
\]
where the first intersection is considered as a schematic intersection counting multiplicities. 

We are interested in the case when $f|_X$ has positive dimensional critical locus, which corresponds to a positive dimensional intersection of $\Gamma_{df}$ and $T^*_X\bC^N$. The above approach of Massey is exactly the deformation to normal cone (see, e.g., \cite[Chapter 5]{Fulton}), which is designed to construct intersection cycles when the set-theoretic intersection has more than expected dimensions. See also \cite[Part IV]{Massey3} for some discussion related to L\^{e}-Vogel cycles. 
\end{remark}

\section{Applications and examples}\label{sec:5}
%
%
 
\subsection{$X$ is an affine space}
The first class of examples we consider are when $X=\mathbb{C}^n$, $f$ is a polynomial function, and $g$ is a general linear function.

\begin{ex}
The following illustrates a special case of Examples~\ref{ex:specialization}.
Consider a general linear function $g: \cc \to \cc$ and the function
\[
f : \cc \to \cc,\quad f(x)=x^4  - 4x^3,
\]
The function $f$ has a  critical point at zero and at three, which we denote by  $X_1$ and $X_2$ respectively. 
For general $t$, the function $f_t:=f-tg$ has three distinct critical points.
We have 
$\lim_{t\to 0}\Crit(f_t|_{X_{\reg}}) =\{X_1,	X_2\} $ 
and see that as $t\to 0$ two of the three critical points come together at $X_1$  and the other  has multiplicity one as shown in Figure~\ref{fig:uni}.

 \begin{figure}[htb!]
   \label{fig:uni}
 \centering
     {\includegraphics[width=0.3\columnwidth]{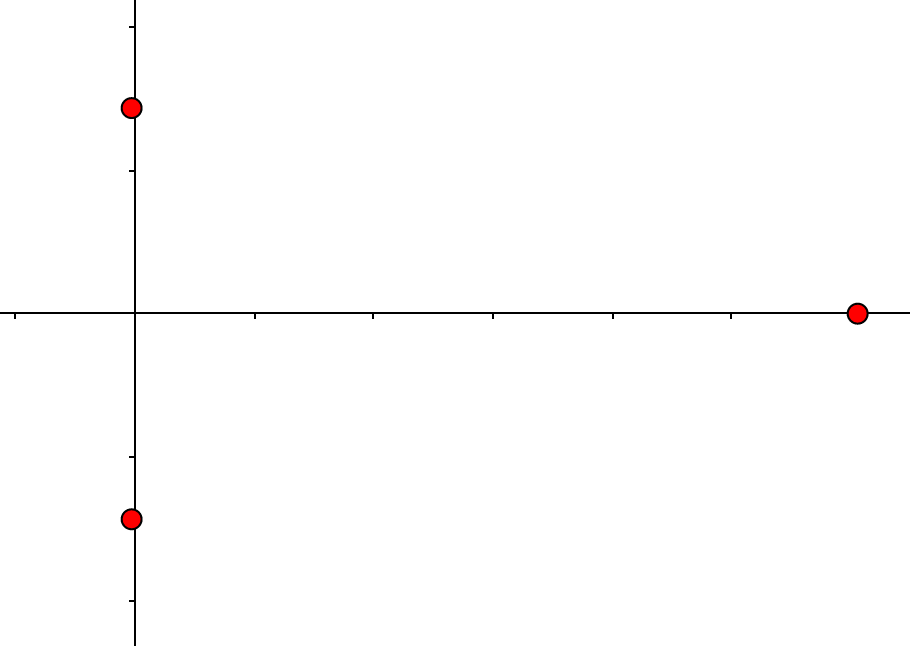}}
     {\includegraphics[width=0.3\columnwidth]{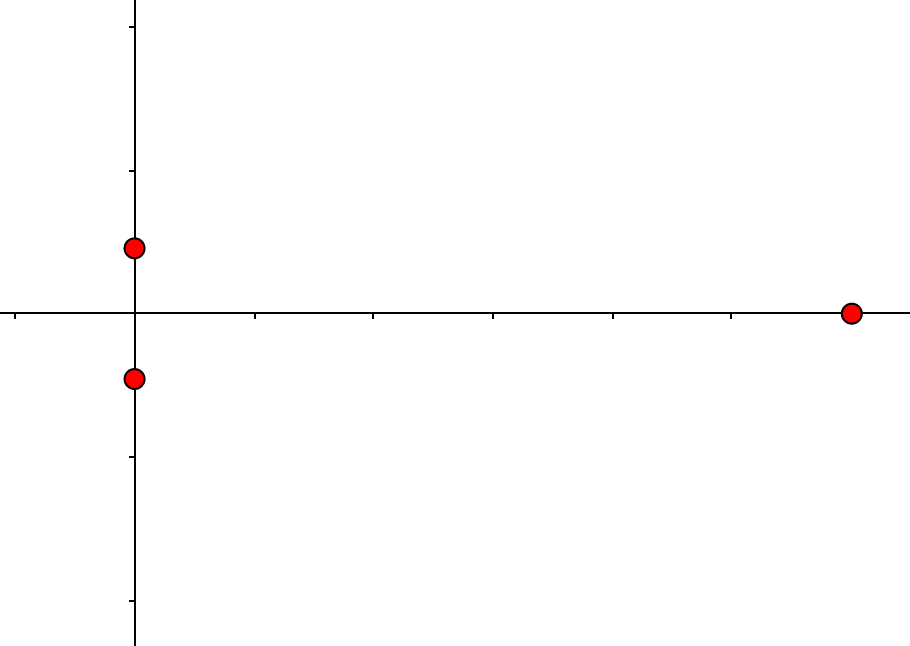}}     
     {\includegraphics[width=0.3\columnwidth]{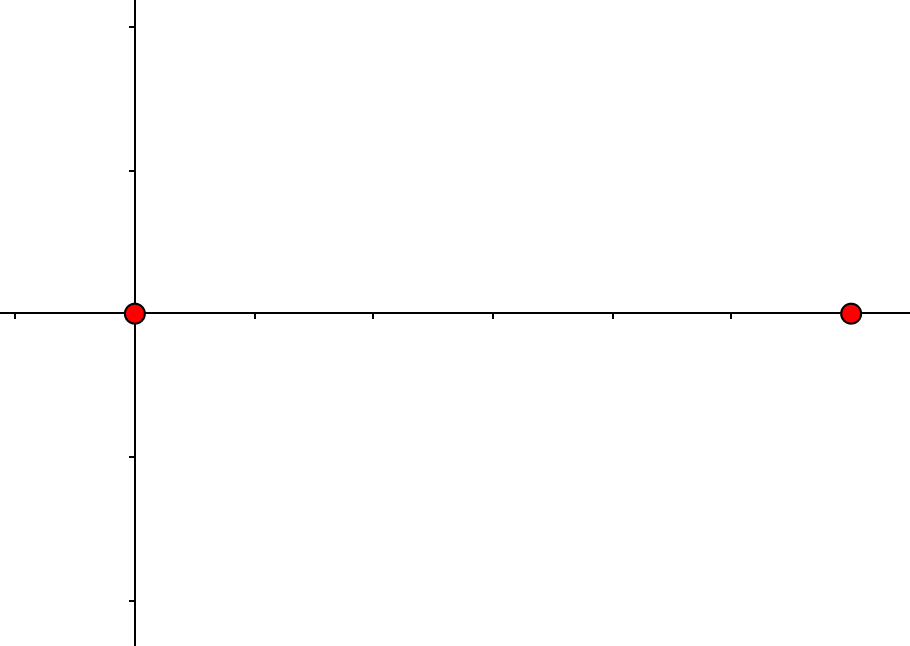}}
   \caption{ For $g(x)=x$, the critical points of $f_t$ are the roots of $4x^3-12x^2-t$. 
   From left to right, the critical points  for $f_1$, $f_{.5}$, and $f_0$ are plotted in the complex planes above.
      }
\end{figure}

A stratification of $X=\cc$ 
such  that $f$ is equisingular on each strata is given by 
 $X_0 = \cc\setminus \{X_1,X_2\}$ and $X_1,X_2$.
In the language of Theorem~\ref{thm_main}, we have
\[
\lim_{t\to 0}\Crit(f_t)  
=
\sum_{i\in \{0,1,2\}}n_i\cdot\Crit(g |_{X_i})
=2 \cdot\Crit(g |_{X_1})  +1 \cdot \Crit(g |_{X_2}).
\] 
Note that the second equality follows as  $g |_{X_0}$ has no critical points.
The $n_i$ are precisely the multiplicity as addressed in Example~\ref{ex:specialization}. 
\end{ex}


\begin{ex}
The next example we consider is $f_t=f-tg:\mathbb{C}^3\to\mathbb{C}$ with $X=\mathbb{C}^3$, $f(x,y,z)=x y^2 - (z-x^2)^2$, and $g$ a general linear function.
The ideal of the variety of critical points of $f$ is generated by the thee partial derivatives of $f$. This ideal has a primary decomposition given by 
$\langle z,y^2,xy, x^2  \rangle$ and 
$\langle y, x^2-z	\rangle$. 
Geometrically, this primary decomposition corresponds to the origin $P$ and a parabola $C$ through the origin.  
An equisingular decomposition of $X$ with respect to $f$ is given by   $X_0 = X\setminus C$, $X_1 = C\setminus \{P\}$ and $X_2 = \{P\}$.  
For general $t$ the function $f_t$ has three critical points, and as $t$ is taken to zero two of the points go to the origin while the third goes to a different point $Q$ in $C$. 
The point $Q$ is the critical point of $g|_C$. 
In the language of Theorem~\ref{thm_main}, we have
\[
\lim_{t\to 0}\Crit ( f_t) = \sum_{i=0,1,2} n_i\cdot \Crit(g |_{X_i}) 
= 1\cdot \Crit(g|_{X_1})+ 2  \cdot \Crit(g|_{X_2})
= 1\cdot Q+ 2 \cdot P.
\]

\end{ex}


\subsection{Semidefinite programming and convex algebraic geometry}\label{ss:SDP}
Semidefinite programming (SDP) is a subfield of convex optimization and has been studied through the lens of algebraic geometry \cite{BPT2013}. 
The aim of an SDP is to  optimize a linear objective function over a 
convex set called a {spectrahedron},
which is the intersection of the cone of positive semidefinite symmetric matrices with an affine space. 

Let $\mathcal{S}^n$ denote the set of $n\times n$  real symmetric  matrices and 
denote the set of $n\times n$ positive semidefinite matrices by $\mathcal{S}^n_+$. 
A set $S\subset \mathbb{R}^m$ is a \emph{spectrahedron} if it has the form
\[
S = \{  ( x_1, \dots , x_m )\in \mathbb{R}^m : A_0+\sum _{i=1}^m A_i x_i \in \mathcal{S}^n_+ \},
\]
for some given symmetric matrices $A_0,A_1,\dots , A_m \in \mathcal{S}^n$.
The \emph{algebraic boundary} of a spectrahedron $S$ is the complex hypersurface given by 
\[
\partial S : =\{ ( x_1, \dots , x_m ) \in \mathbb{C}^n : \det (A_0+\sum _{i=1}^m A_i x_i)=0		\}.
\]
An algebraic approach to SDP is to study the critical points of a linear function on $\partial S$ and to determine the algebraic degree of this optimization problem \cite{MR2496496,MR2546336}.

\begin{ex}[Elliptic curve algebraic boundary]
Consider the spectrahedron 
\[
S = \{( x, y  )\in \mathbb{R}^2 :
\begin{bsmallmatrix}
x+1& 0& y\\
 0& 2  &-x-1\\
  y& -x-1& 2
\end{bsmallmatrix}
 \in \mathcal{S}^3_+ 
 \},
\]
which has an algebraic boundary defined by the elliptic curve 
\[
\partial S= \{
(x,y)\in \mathbb{C}^2 : -x^3-3x^2-2y^2+x+3 = 0
\}.
\]
For an illustration of the real points on the algebraic boundary and a description of the spectrahedron $S$, see \cite[Example 2.7]{BPT2013}.
In the following, we take $X$ to be the algebraic boundary $\partial S$, which is smooth.
Let $g:X\to\mathbb{C}$ denote a general linear function and let $f:X\to\mathbb{C}$ be the projection given by $f(x,y)=-x$.
For $t=0$, the function $f$ has three critical points, which are the three points $X_1,X_2,X_3$ of the curve intersected with the $x$-axis. 
On the other hand, for a general $t$ the general linear function $f_t=f-tg:X\to\mathbb{C}$ has four critical points. 

As we take $t$ to zero, Figure~\ref{fig:SDPellipticCurve} suggests one critical point of $f_t$ goes to~infinity. 
To prove this, by Corollary~\ref{cor:inf}, it suffices to determine 
\eqref{eq:inf} equals one.
This follows as  $\chi(X)=-1$,  $n_i=1$, $\big|\Crit(g|_{X_{i}})\big|=1$,
and
\[
1=-(\chi(X)-3)-\sum_{i\in \{1,2,3\}}n_{i}\cdot \big|\Crit(g|_{X_{i}})\big|=-1(-1-3)-(1+1+1).
\]
In the previous equation we subtract 3 from $\chi(X)$ because a general linear function intersects $X$ at three points.

 \begin{figure}[htb!]
 \centering
   \begin{picture}(153,183)
     {\includegraphics[width=0.5\columnwidth]{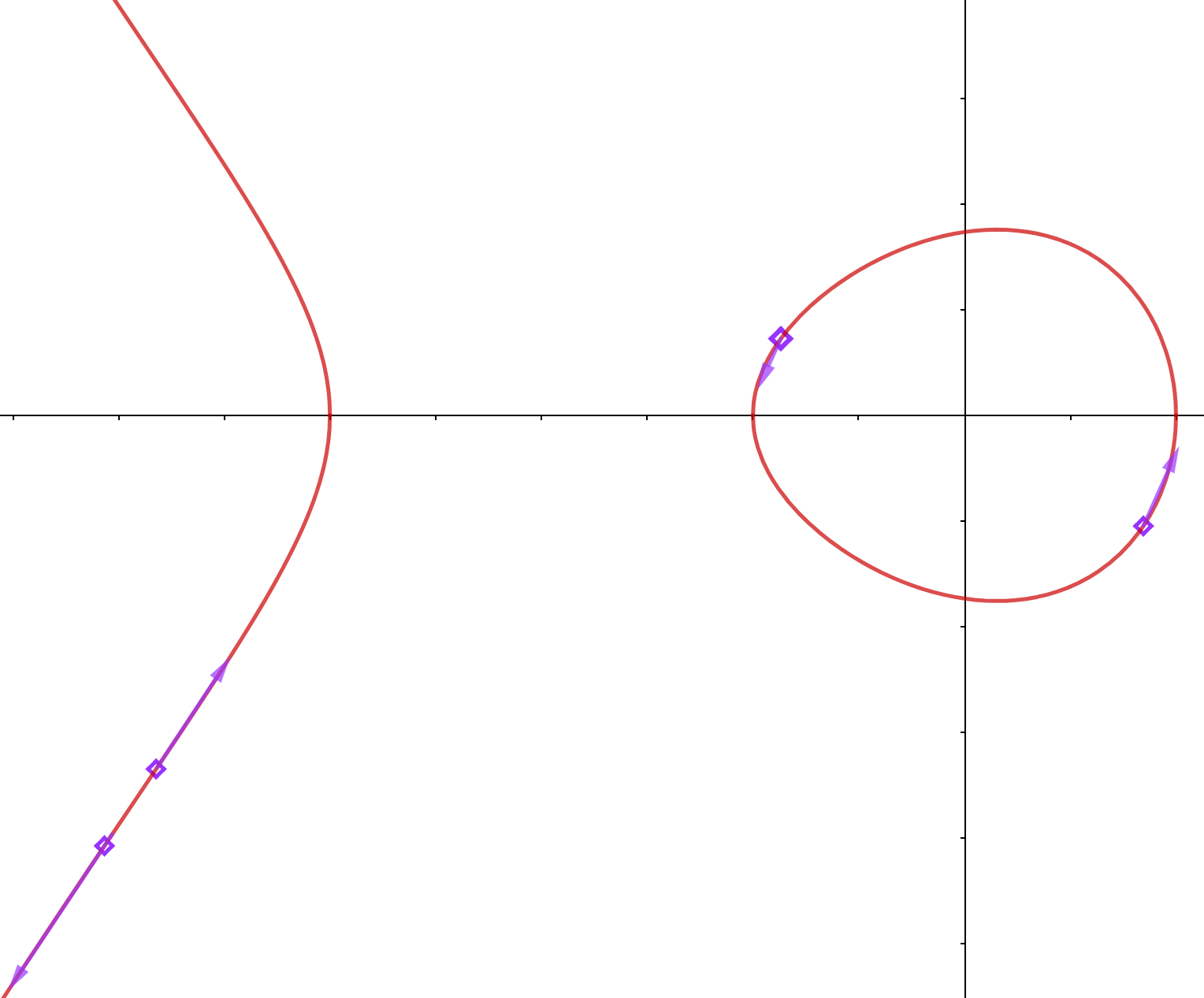}}
   \end{picture}
	\newline
   \caption{
  For $t=1$, the critical points for the general linear function $f(x,y)-tg(x,y)=-x-t(1.5x-0.92y)$ are plotted as purple dots on the elliptic curve $X$. 
  As $t$ is taken to zero, three of the four critical points approach the $x$-axis and one goes to infinity. 
      } \label{fig:SDPellipticCurve}
\end{figure}

\end{ex}

\subsection{Euclidean distance degree}\label{ss:ED}
The Euclidean distance degree (ED degree) \cite{DHOST} of an affine algebraic subvariety $X$ of $\mathbb{C}^n$ 
is defined as the number of critical points 
of the squared Euclidean distance function $d_u(x) := \sum_{i=1}^n(x_i-u_i)^2$
on $X_{\reg}$ for generic $u=(u_1,\dots,u_n)$.
When $X\cap\mathbb{R}^n$ is smooth and compact, the closest point will be a critical point and a solution to the nearest point problem. 
Results on Euclidean distance degrees have a hypothesis requiring genericity of the data point $u$ \cite{AH2018,AH,BD2015, MR3789441,MR3996403, MR3563098,MRW2018} or study discriminant loci \cite{Hor2017}. 
Our results allow us to handle situations when the data is not generic. 
Instances of nongeneric behavior include when the data may be sparse as in Example~\ref{ex:sparse} or satisfy some algebraic property like in Example~\ref{ex:rank}. 

With  generic noise $\epsilon\in\mathbb{C}^n$ and  arbitrary data $u$, 
the data $u+\epsilon$ is generic. 
In the context of distance geometry, 
Theorem~\ref{thm_main} describes what happens to the set of critical points of $d_{u+t\epsilon}$ on $X_{\reg}$ as $t$ is taken to zero.

Let $X$ denote an  subvariety of $\mathbb{C}^n$ with a Whitney stratification $\{S_i\}_{i \in \Lambda}$.
For arbitrary data $u\in\mathbb{C}^n$, generic $\epsilon\in \mathbb{C}^n$, and
$t\in \mathbb{C}$,
consider the squared distance function
\begin{align*}
d_{u+t\epsilon}(x)&=\sum_{i=1}^n  ( x_i - (u_i  + t \epsilon_i ))^2\\
&=\sum_{i=1}^n x_i^2 -2 \sum_{i=1}^n   u_i x_i - 2 \sum_{i=1}^n  t \epsilon_i  x_i +  \sum_{i=1}^n (u_i + t \epsilon_i)^2\\
&=\sum_{i=1}^n (x_i -u_i)^2 - 2 \sum_{i=1}^n  t \epsilon_i  x_i +  \sum_{i=1}^n (u_i + t \epsilon_i)^2-\sum_{i=1}^ nu_i^2\\
&=d_{u}(x)- tg(x) +  c
\end{align*}
with
\begin{equation}\label{eq:epsilonG}
g(x)=2 \sum_{i=1}^n  \epsilon_i  x_i
\end{equation}
 and
 $c=\sum_{i=1}^n (u_i + t \epsilon_i)^2-\sum_{i=1}^ nu_i^2$.
The set of critical points does not depend on $c$ because $c$ is constant with respect to $x$.
So the critical points of $d_{u+t\epsilon}$
 coincide with those of $d_u-tg$. 
Moreover, since  $\epsilon$ is generic, we have
 $g$ is a generic linear function and Theorem~\ref{eq_main} applies to $d_u-tg$.
	
\begin{ex}[Sparse data]\label{ex:sparse}
Consider the curve $X$ in $\mathbb{C}^2$ defined by $x^2+y^2=1$ and the squared distance function 
from the point $p_t=(t\epsilon_1,t \epsilon_2)\in\mathbb{C}^2$, which is 
\[
f_t(x)=( x - t \epsilon_1 )^2+( y - t \epsilon_2 )^2.
\]
When $t$ is generic $f_t$ has two critical points. When $t=0$, $p_0$ is the origin and every point in the curve is a critical point of $f_0$. 
In terms of Theorem~\ref{thm_main}, we have: 
\[
\lim_{t\to 0}\Crit(f_t)= 1\cdot \Crit(g|_X ),
\]
with $\Crit(g|_X )$ consisting of two points.

\begin{ex}[Cardioid Curve]\label{ex:cardioid}
Let $X$ denote the cardioid curve in Figure~\ref{fig:dataZero}, which has a singular point at the origin $P_1$. 
The function $f_t(x,y)=x^2 +y^2 - t (\epsilon_1 x +\epsilon_2 y)$ has three critical points for general $t$ and general $\epsilon=(\epsilon_1,\epsilon_2)$.
Moreover, these critical points coincide with those of the distance function $d_{t\epsilon}$, which are illustrated in Figure~\ref{fig:dataZero} with $\epsilon=(3.12,3.34)$.
The function $f_0:X\to\mathbb{C}$ only has  two  isolated critical points $P_1, P_2$, and Theorem \ref{thm_main} specializes to 
\[
\lim_{t\to 0}\Crit(f_t)= 2 P_1+1 P_2.
\]
\end{ex}

\end{ex}

\begin{ex}[Eckart-Young and low rank data]\label{ex:rank}
In this example, we take $X$ to be the eight dimensional singular hypersurface in $\mathbb{C}^{3\times 3}$ defined by 
$\det [{x_{i,j}}]_{3,3}=0$ consisting of $3\times 3$ matrices of rank at most two.
By the Eckart-Young Theorem,  
the ED degree of $X$ is known to be three.
Moreover, a Whitney stratification of $X$ is given by the rank condition,
i.e., $X$ has a regular stratum consisting of matrices of rank exactly $2$ and the singular locus consists of two strata corresponding to matrices of rank one and zero respectively. 

Consider the following four data matrices

\[
u_1=\left[
\begin{matrix}
3 & 0 & 0\\
0 & 2 & 0\\
0 & 0 & 1
\end{matrix}
\right]
\quad 
u_2=\left[
\begin{matrix}
2 & 0 & 0\\
0 & 1 & 0\\
0 & 0 & 0
\end{matrix}
\right]
\quad 
u_3=\left[
\begin{matrix}
2 & 0 & 0\\
0 & 2 & 0\\
0 & 0 & 1
\end{matrix}
\right]
\quad
u_4=\left[
\begin{matrix}
1 & 0 & 0\\
0 & 0 & 0\\
0 & 0 & 0
\end{matrix}
\right].
\]
Each distance function $d_{u_i+t\epsilon}$ exhibits different limiting behavior among the sets of critical points as $t\to 0$ which we investigate using homotopy continuation methods \cite{MR3563098}. 

For $d_{u_1+t\epsilon}$, the set of three critical  points (corresponding to ED degree of $X$ is three)
converges to the set of three distinct critical points on $X_{\reg}$ given by 
$
\begin{bsmallmatrix}
0& 0& 0\\			 0& 2  &0\\			  0& 0& 1
\end{bsmallmatrix}$, 
 $\begin{bsmallmatrix}
3& 0& 0\\			 0& 0  &0\\			  0& 0& 1
\end{bsmallmatrix}$,~%
$\begin{bsmallmatrix}
3& 0& 0\\			 0& 2  &0\\			  0& 0& 0
\end{bsmallmatrix}$.

The stratified critical locus of $d_{u_2}$ consists of three isolated points, two of which are in the singular locus of $X$. 
Moreover, the set of three critical points of $d_{u_2+t\epsilon}$ converges as $t$ goes to zero to the point 
$\begin{bsmallmatrix}
2& 0& 0\\			 0& 1  &0\\			  0& 0& 0
\end{bsmallmatrix}$
 on the regular locus $X_{\reg}$ 
 and the previously mentioned two points 
  $\begin{bsmallmatrix}
2& 0& 0\\			 0& 0  &0\\			  0& 0& 0
\end{bsmallmatrix}$,
  $\begin{bsmallmatrix}
0& 0& 0\\			 0& 1  &0\\			  0& 0& 0
\end{bsmallmatrix}$
 in the rank one stratum of the singular locus of $X$.

The distance function $d_{u_3}$
has a positive dimensional critical locus given by the union of an isolated regular point and the quadratic curve $Q\subset X_{\reg}$ 
given by
the set of matrices of the form 
$\begin{bsmallmatrix}
a& b& 0\\			 b& 2-a  &0\\			  0& 0& 1
\end{bsmallmatrix}$
with $a(2-a)=b^2$.
The limit set of critical points of $d_{u_3+t\epsilon}$ 
has three distinct points in $X_{\reg}$, one given by 
  $\begin{bsmallmatrix}
2& 0& 0\\			 0& 2  &0\\			  0& 0& 0
\end{bsmallmatrix}$,
and the other two 
being 
contained in $Q$.
These two points correspond to $\Crit(g|_Q)$ where $g$ is a general linear function given by $\epsilon$ as in \eqref{eq:epsilonG}.

For $d_{u_4+t\epsilon}$, the limit of the set of critical points 
consists of one point at the origin with multiplicity one, and another point 
  $\begin{bsmallmatrix}
1& 0& 0\\			 0& 0  &0\\			  0& 0& 0
\end{bsmallmatrix}$
with multiplicity two 
in the rank one stratum.
These respective multiplicities correspond to coefficients $n_i$ 
in Theorem~\ref{thm_main}.

\begin{remark}
In \cite{MRW3}, we studied the number of critical points in the smooth projective case by perturbing the squared Euclidean distance function by a general quadratic function. In the projective setting, no points go to infinity, so we have an equality there. In contrast, in the above examples the emphasis is on perturbing the squared Euclidean distance function with a linear function, and we do not assume the variety to be smooth. 
\end{remark}

\end{ex}

\bibliographystyle{abbrv}
\bibliography{REF_ED_degree_Morse_Pencils}
\end{document}